\newtheorem{theorem}{Theorem}[section]
\newtheorem{prop}[theorem]{Proposition}
\newtheorem{lemma}[theorem]{Lemma}
\newtheorem{cor}[theorem]{Corollary}
\newtheorem{add}[theorem]{Addendum}
\newtheorem{conj}[theorem]{Conjecture}
\newtheorem{ex}[theorem]{Example}
\theoremstyle{remark}
\newtheorem{dfn}[theorem]{Definition}
\newtheorem{remark}[theorem]{Remark}
\newtheorem{claim}[theorem]{Claim}
\def\co{\colon\thinspace}
\def\ep{\epsilon}
\def\R{\mathbb{R}}
\def\C{\mathbb{C}}
\title{Submanifolds and the Hofer norm}
\author{Michael Usher}
\date{\today}
\address{Department of Mathematics\\University of Georgia\\Athens, GA 30602}
\email{\href{mailto:usher@math.uga.edu}{usher@math.uga.edu}}
\begin{document}
\begin{abstract}
In \cite{Ch00}, Chekanov showed that the Hofer norm on the Hamiltonian diffeomorphism group of a geometrically bounded symplectic manifold induces a nondegenerate metric on the orbit of any compact Lagrangian submanifold under the group.  In this paper we consider the orbits of more general submanifolds.  We show that, for the Chekanov--Hofer pseudometric on the orbit of a closed submanifold to be a genuine metric, it is necessary for the submanifold to be coisotropic, and we show that this condition is sufficient under various additional geometric assumptions.  At the other extreme, we show that the image of a generic closed embedding with any codimension larger than one is ``weightless,'' in the sense that the Chekanov--Hofer pseudometric on its orbit vanishes identically.  In particular this yields examples of submanifolds which have zero displacement energy but are not infinitesimally displaceable.
\end{abstract}


%



\maketitle




\section{Introduction}  Since its introduction in \cite{Ho}, the \textbf{Hofer norm} $\|\cdot\|$ on the group $Ham(M,\omega)$ of (compactly supported) Hamiltonian diffeomorphisms 
of a symplectic manifold $(M,\omega)$ has been an important tool in the study of that group.  Of course, in attempting to understand a group, it is often useful to study natural actions of that group on various sets. Because the Hofer norm is invariant under inversion and conjugation, if $Ham(M,\omega)$ acts transitively on a set $S$ then we obtain a pseudometric $\delta$ on $S$ which is invariant under the action of $Ham(M,\omega)$, defined by \[ \delta(s_0,s_1)=\inf\left\{\|g\|\left|g\in Ham(M,\omega),\,gs_0=s_1 \right.\right\}   \]  The present paper studies this pseudometric in the case that $S$ is equal to the orbit $\mathcal{L}(N)$ of a closed subset $N\subset M$ under the action of $Ham(M,\omega)$.  The special case which has so far received the most attention is that in which $N$ is a compact Lagrangian submanifold\footnote{As indicated in Section \ref{conv}, in this paper all manifolds will be assumed to have no boundary unless the modifier ``with boundary'' is explicitly added.  We will avoid using the conventional term ``closed (sub)manifold'' to refer to a compact (sub)manifold without boundary, as we will sometimes consider submanifolds which are closed as subsets but which may not be compact, and it would be confusing to at the same time use the term ``closed submanifold'' to mean something other than this.}: Oh showed in \cite[p. 508]{Oh1} that $\delta$ defines a nondegenerate metric on $\mathcal{L}(N)$ when $N$ is the zero section of the cotangent bundle of a compact manifold, and shortly thereafter Chekanov \cite{Ch00} showed more generally that if $N$ is any compact Lagrangian submanifold of a geometrically bounded\footnote{The definition of ``geometrically bounded'' will be recalled in Section \ref{lagsec}.  Geometrically bounded symplectic manifolds include for instance those which are compact or convex (\emph{i.e.}, obtained as the Liouville completion of a compact manifold with contact type boundary), as well as products of these.  See \cite[Section 2]{CGK} for a proof that convex manifolds satisfy the property.} symplectic manifold then $\delta$ is nondegenerate on $\mathcal{L}(N)$. (Accordingly we will generally call $\delta$ the ``Chekanov--Hofer pseudometric,'' and say that $N$ is \textbf{CH-rigid} when $\delta$ is nondegenerate.)  See, \emph{e.g.}, \cite{Os}, \cite{Kh}, \cite{U2} for other results concerning the Chekanov--Hofer metric for Lagrangian submanifolds.

On the other hand, if one takes $N$ to be a singleton then it is quite easy to see that the Chekanov--Hofer pseudometric vanishes identically on $\mathcal{L}(N)$.  We will see in this paper that this continues to be the generic situation even when $N$ has relatively high dimension, though not when $N$ has codimension one.  Namely, where by definition a ``closed embedding'' is an embedding whose image is a closed subset, we have:

\begin{theorem}\label{int1} Let $(M,\omega)$ be a symplectic manifold and $X$ a connected smooth manifold. \begin{itemize} \item[(i)] If $\dim X=\dim M-1$, then for any closed embedding $f\co X\to M$ the image $f(X)$ is CH-rigid (\emph{i.e.}, $\delta$ is nondegenerate on $\mathcal{L}(f(X))$).
\item[(ii)] If $\dim X<\dim M-1$, then there is a residual subset $\mathcal{U}$ in the space of $C^{\infty}$ closed embeddings $f\co X\to M$ (with the strong $C^{\infty}$ topology) such that for every $f\in\mathcal{U}$ the Chekanov--Hofer pseudometric $\delta$ on $\mathcal{L}(f(X))$ vanishes identically.  If $X$ is compact, there is an integer $a$ depending only on $\dim M$ and $\dim X$ such that $\mathcal{U}$ is open in the $C^a$ topology. 
\end{itemize}
\end{theorem}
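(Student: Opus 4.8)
I would deduce this from the positivity of the displacement energy of a ball, which holds in any symplectic manifold with no geometric‑boundedness hypothesis (for instance via the Hofer--Zehnder capacity and the energy--capacity inequality). Write $N=f(X)$, suppose $g(N)\neq N$, and --- first treating the case in which $N$ separates $M$ (the general case reduces to this by localising near a point of $g(N)\setminus N$, or by passing to a suitable covering of $M$) --- let $M^{\pm}$ be the two sides of $N$ and $M^{\pm}_{g}$ the two sides of $g(N)$, labelled so that $M^{\pm}$ and $M^{\pm}_{g}$ agree outside the compact support of $g$. Because $g(N)\neq N$, one of $M^{+}\cap M^{-}_{g}$, $M^{-}\cap M^{+}_{g}$ is a nonempty open set disjoint from $N$ --- the ``region swept out'' in passing from $N$ to $g(N)$ --- so it contains an embedded symplectic ball $B_{0}$. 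If $\psi\in Ham(M,\omega)$ satisfies $\psi(N)=g(N)$ then $\psi$, being isotopic to the identity through compactly supported maps, carries the side of $N$ containing $B_{0}$ onto the corresponding side of $g(N)$; the latter is disjoint from $B_{0}$, so $\psi(B_{0})\cap B_{0}=\emptyset$ and $\|\psi\|\geq e(B_{0})$. Taking the infimum over such $\psi$ gives $\delta(N,g(N))\geq e(B_{0})>0$, so $f(X)$ is CH‑rigid. The codimension‑one hypothesis is used exactly to produce the two sides.

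\textbf{Part (ii), the residual set.} For (ii) I would first isolate the right genericity condition, which I expect to be ``general position relative to $\omega$'': at every $p\in f(X)$ the restriction $\omega|_{T_{p}f(X)}$ has the largest rank allowed by $\dim X$, and the loci where that rank drops are themselves as transverse as possible. Such conditions are governed by finitely many jets of $f$; their failure set inside the corresponding jet bundle is a closed subvariety of positive codimension, so by Thom transversality it is avoided by a residual set $\mathcal{U}$ of closed embeddings in the strong $C^{\infty}$ topology. When $X$ is compact the jet bundle lies over a compact base, so ``$j^{a}f$ misses the bad subvariety'' is then an \emph{open} condition in the $C^{a}$ topology, with $a$ the order of jets used --- a number determined by $\dim X$ and $\dim M$.

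\textbf{Part (ii), weightlessness.} Now fix $f\in\mathcal{U}$, put $N=f(X)$, and let $g\in Ham(M,\omega)$; I must produce $h_{j}\in Ham(M,\omega)$ with $h_{j}(N)=g(N)$ and $\|h_{j}\|\to0$. Using the $Ham$‑invariance of $\delta$ together with the standard fragmentation of Hamiltonian diffeomorphisms, it suffices to treat the case where $g$ is supported in a single Darboux ball, and then (correcting $g$ near $g(N)$ inside that ball) the problem becomes: given a Hamiltonian isotopy $\{N_{t}\}_{t\in[0,1]}$ from $N$ to $g(N)$ that is the identity outside a ball, and $\varepsilon>0$, realise the same endpoints by an isotopy of Hofer length $<\varepsilon$. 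The device I would use is a \emph{slide}: a Hamiltonian isotopy whose generator $H$ is, at each instant, constant along the current submanifold $N_{t}$ (equivalently $X_{H}|_{N_{t}}$ lies in $(TN_{t})^{\omega}$, the $\omega$‑orthogonal complement of $TN_{t}$) and supported in an $\eta$‑thin tubular neighbourhood of the region it sweeps out. Because $H$ is constant on $N_{t}$, its oscillation over an $\eta$‑tube is $O(\eta)$, so a slide that moves $N_{t}$ a distance $\sim\eta$ costs only $O(\eta^{2})$ in Hofer norm; concatenating the $\sim\eta^{-1}$ slides needed to follow $\{N_{t}\}$ therefore costs a total $O(\eta)\to0$, giving $\delta(N,g(N))=0$. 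Codimension $\geq2$ is exactly what makes this possible: $(TN_{t})^{\omega}$ then has rank $\geq2$, and general position guarantees that it is not contained in $TN_{t}$ (so a slide genuinely displaces $N_{t}$ rather than merely reparametrising it --- the opposite of the coisotropic situation of part (i)) and that the available slide directions are bracket generating among deformations of the submanifold.

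\textbf{The main obstacle.} The crux is precisely that last claim: that concatenations of slides --- whose infinitesimal effect is always the ``symplectically normal'' displacement $X_{H}|_{N_{t}}\in(TN_{t})^{\omega}$ --- can realise an \emph{arbitrary} Hamiltonian isotopy of $N$. This is a Chow‑type controllability statement for the distribution of slide directions inside the (infinite‑dimensional) space of deformations of $N$, and it is here that general position must be used quantitatively; one also has to deal with the locus where $N_{t}$ is instantaneously coisotropic, where the slide directions degenerate --- a set of positive codimension which, for $\dim X$ large, a generic submanifold cannot avoid. Having first fragmented $g$ into pieces supported in Darboux balls is what keeps this in hand: inside a ball $(TN_{t})^{\omega}$ is a trivial bundle, so global slide directions exist, and there is enough room to route the slides around the degeneracy locus. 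I expect this controllability and approximation step, rather than the energy bookkeeping or the transversality, to be the technical heart of (ii).
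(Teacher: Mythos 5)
For part (i) your approach matches the paper's: treat the separating case first via the energy--capacity inequality applied to a nonempty open set lying between $N$ and $g(N)$, and reduce the general case to it by passing to finite covers. One small caution: you assert that a compactly supported $\psi$ with $\psi(N)=g(N)$ must carry the side of $N$ containing $B_0$ onto ``the corresponding side'' of $g(N)$, justifying this by ``isotopic to the identity.'' That is not automatic when $M$ is compact (so that both sides are bounded). It is also unnecessary: since $\psi$ permutes components of $M\setminus N$ to components of $M\setminus g(N)$, either $\psi(M_0)$ or $\psi(M_1)$ is disjoint from the component of $M\setminus g(N)$ you chose, and in either case $\psi$ displaces a nonempty open set. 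That is how the paper phrases it, and it closes the gap.

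For part (ii) you have correctly located the shape of the genericity condition (a jet-transversality condition involving higher-order derivatives along the coisotropic locus, open in $C^a$ when $X$ is compact) and the right ``slide'' mechanism (Hamiltonians constant along the moving submanifold cost little Hofer energy). But your bridge from slides to weightlessness is genuinely different from the paper's, and you leave the crucial step unresolved, as you yourself say: you would need a Chow-type controllability statement that concatenations of slides can track an arbitrary Hamiltonian isotopy of $N$, and you give no argument for it. The paper avoids that problem altogether. It introduces the rigid locus $R_N=\bigcap_{\phi\in\bar{\Sigma}_N}\phi^{-1}(N)$, where $\bar{\Sigma}_N$ is the Hofer-closure of the stabilizer of $N$, and proves two facts that replace controllability. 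First, the slide construction is used only locally and infinitesimally: if $H|_N=0$ and $dH(v)>0$ for some $v\in T_xP^{\omega}\setminus T_xN$, then the flow of $H$ lies in $\bar{\Sigma}_N$ (being a Hofer-limit of flows of Hamiltonians vanishing on a whole neighborhood of $N$), and this forces $R_N$ into the smaller stratum $\{x\in P\;:\;T_xN^{\omega}\subset T_xP\}$. Iterating this, combined with the jet-transversality genericity, shrinks $R_N$ to the empty set. Second — and this is the conceptual substitute for your controllability claim — Banyaga's fragmentation lemma shows that $R_N=\varnothing$ already implies $\bar{\Sigma}_N=Ham(M,\omega)$, i.e.\ $N$ is weightless, without ever constructing a low-energy isotopy realizing a prescribed $g$. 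So the gap in your proposal is real, and the paper's fix is not to solve the controllability problem but to change the target: one only needs to exhibit enough elements of $\bar{\Sigma}_N$ to empty out $R_N$, and fragmentation does the rest.
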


\begin{proof}  Part (i) is Theorem \ref{hypthm} and part (ii) is Corollary \ref{wtmain} (see Corollary \ref{wtmain} and what precedes it for the required value of $a$; we just mention here that one can take $a$ to be as small as $2$ provided that $\dim X<\binom{\dim M-\dim X+1}{2}$).  
\end{proof}

In what follows, a submanifold $N\subset M$ such that $\delta$ vanishes identically on $\mathcal{L}(N)$ will be called \textbf{weightless}; thus when $N$ is weightless, given any position to which $N$ can be moved by a Hamiltonian diffeomorphism, such a movement can be carried out in a way that requires arbitrarily little energy.  

Thus closed hypersurfaces (by Theorem \ref{int1}(i)) and compact Lagrangian submanifolds (by \cite{Ch00}) of geometrically bounded symplectic manifolds are CH-rigid, whereas by Theorem \ref{int1}(ii) there are many submanifolds having the opposite extreme property of being weightless.  Hypersurfaces and Lagrangian submanifolds of symplectic manifolds $(M,\omega)$ have a natural geometric property in common: they are \emph{coisotropic}.  (Recall that, where for a subspace $W$ of a symplectic vector space $(V,\omega)$ we denote by $W^{\omega}$ the $\omega$-orthogonal complement of $W$ in $V$, a submanifold $N\subset (M,\omega)$ is coisotropic provided that for all $x\in N$ we have $T_{x}N^{\omega}\subset T_x N$.)  One of the main themes of this paper is that the behavior of the Chekanov--Hofer pseudometric $\delta$ on $\mathcal{L}(N)$ is intimately related to how close $N$ is to being coisotropic.  Indeed we gather evidence for the following:

\begin{conj}\label{conjmain} Let $N$ be a compact submanifold of a geometrically bounded symplectic manifold $(M,\omega)$.  Then $N$ is CH-rigid if and only if $N$ is coisotropic.
\end{conj}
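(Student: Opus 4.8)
I would establish the two implications of the biconditional separately. The direction ``$N$ CH-rigid $\Rightarrow$ $N$ coisotropic'' I expect to hold unconditionally, and I would prove its contrapositive: if $N$ is not coisotropic then $\delta$ is degenerate on $\mathcal{L}(N)$. Fix $x_0\in N$ with $(T_{x_0}N)^{\omega}\not\subseteq T_{x_0}N$; since the coisotropic condition is closed, $N$ is non-coisotropic on an entire neighborhood of $x_0$. By straightening $N$ near $x_0$ and applying a relative Moser argument, I may assume there is a Darboux chart in which $N$ coincides near $x_0$ with a linear subspace $W$ satisfying $W^{\omega}\not\subseteq W$, so that the failure of coisotropy supplies a symplectically conjugate pair of directions neither of which is tangent to $W$. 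This ``free'' conjugate pair --- precisely what a coisotropic point lacks --- is what permits a small piece of $N$ near $x_0$ to be pushed off of itself by compactly supported Hamiltonians of arbitrarily small oscillation, via the same thin-tube mechanism that makes points, and the generic high-codimension submanifolds of Theorem \ref{int1}(ii), weightless. Concretely I would fix one definite local deformation $N'\neq N$ of $N$, supported in a ball about $x_0$, and then show by a rescaling argument --- routing the connecting isotopy through geometry at scale $1/k$ --- that $N$ is carried to $N'$ by Hamiltonians $g_k$ with $\|g_k\|\to 0$, whence $\delta(N,N')=0$. The delicate points are to guarantee that the local wiggle genuinely alters $N$ as a subset rather than sliding $N$ along itself, and that the small-energy estimates survive globalization; I would expect both to follow from a localized version of the argument behind Theorem \ref{int1}(ii).

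For ``$N$ coisotropic $\Rightarrow$ $N$ CH-rigid'', let $N$ be compact and coisotropic in a geometrically bounded $(M,\omega)$. By Libermann's theorem the characteristic distribution $(TN)^{\omega}\subseteq TN$ is integrable, the leaf space of the resulting characteristic foliation is locally a symplectic manifold (the symplectic reduction of $N$), and the coisotropic neighborhood theorem models a tube about $N$ in terms of this data. The goal is, for each $\phi\in Ham(M,\omega)$ with $\phi(N)\neq N$, a bound $\delta(N,\phi(N))\geq c(N,\phi(N))>0$. This is known at the two extremes of the coisotropic range: $N$ Lagrangian is Chekanov's theorem \cite{Ch00}, proved via the energy--capacity inequality for Lagrangian submanifolds (equivalently, positivity of Lagrangian displacement energy, which itself rests on Lagrangian Floer homology over a geometrically bounded manifold), while $N$ a hypersurface is Theorem \ref{hypthm} of this paper; and for stable coisotropic submanifolds, or level sets of moment maps, one can invoke leafwise-intersection Floer machinery, in which a displacement energy is bounded below by a symplectic-area quantity. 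The general plan would be to reduce to a Floer-theoretic energy--capacity statement for a diagonal-type submanifold --- either by passing to $(M\times M,\omega\oplus-\omega)$, or by working inside the coisotropic neighborhood with the leaf space as the ambient symplectic target --- and to extract the desired positive lower bound from the non-vanishing of an appropriate Floer-homological or spectral invariant.

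The principal obstacle, and the reason Conjecture \ref{conjmain} remains only a conjecture, is exactly that last step: for a general compact coisotropic $N$ it is not known that the pertinent (leafwise) Floer homology is well-defined and nonzero without extra hypotheses --- monotonicity of $N$, the characteristic foliation being an honest fibration, good behavior of the leaf space, and so on --- so one cannot yet produce the energy--capacity lower bound in full generality. Accordingly I would expect near-term progress on the ``if'' direction to consist of verifying it for coisotropic $N$ subject to such additional geometric conditions (the strategy the rest of the paper pursues), with the unrestricted statement awaiting progress in the Floer theory of coisotropic submanifolds.
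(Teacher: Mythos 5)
You have correctly diagnosed that this statement is a conjecture rather than a theorem: the paper proves only the ``only if'' direction unconditionally (Proposition \ref{onlyif}, via Corollary \ref{ncdeg}) and the ``if'' direction only under additional geometric hypotheses (Theorem \ref{coisomain}), so no complete proof is on offer in the paper either, and your closing paragraph agrees with its own assessment of what remains open.

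For the unconditionally proved direction, though, your sketch takes a genuinely different and, I think, harder route than the paper. You propose to fix a definite local deformation $N'$ near a non-coisotropic point $x_0$ and show $\delta(N,N')=0$ by routing the connecting isotopy through geometry at scale $1/k$. Two things are unresolved there: (a) once $N'$ is fixed, the piece of $N$ that must be carried to $N'$ has fixed size, so it is not clear that a rescaling to scale $1/k$ can both reach $N'$ and have Hofer norm $\to 0$ --- the thin-tube mechanism that makes \emph{points} weightless does not transfer verbatim to $d$-dimensional pieces; (b) as you note yourself, one must be sure the wiggle actually changes $N$ as a subset. The paper sidesteps both issues with the rigid-locus device. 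It sets $R_N=\bigcap_{\phi\in\bar\Sigma_N}\phi^{-1}(N)$, and in Lemma \ref{rnp} shows that for any compactly supported $H$ with $H|_N=0$, the time-$t$ flow $\phi^t_H$ lies in $\bar\Sigma_N$: compose $H$ with cutoff functions $g_n$ that vanish for $|s|<1/n$ to get $H_n=g_n\circ H$ vanishing on a \emph{neighborhood} of $N$, hence $\phi^t_{H_n}\in\Sigma_N$, and $H_n\to H$ uniformly gives $\phi^t_{H_n}\to\phi^t_H$ in Hofer norm. Choosing $H$ with $dH(v)>0$ for some $v\in T_xN^\omega\setminus T_xN$ at a non-coisotropic $x$, the flow moves $x$ off $N$, so $x\notin R_N$; hence $R_N\neq N$ and $N$ is not CH-rigid by Lemma \ref{RNlemma}(ii). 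No rescaling, no globalization issue, and the conclusion holds for arbitrary (not just geometrically bounded) ambient symplectic manifolds. Your final sentence --- that the details should ``follow from a localized version of the argument behind Theorem \ref{int1}(ii)'' --- in fact points at exactly this machinery, so you may wish to simply adopt it rather than a separate rescaling scheme.

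For the ``if'' direction your outline agrees with the paper in spirit: the regular case is handled by showing a dense set of points lie on compact Lagrangians contained in $N$ and invoking Corollary \ref{lagopen} (which in turn rests on a transverse-intersection energy estimate, Theorem \ref{disptvs}); the stable case with dense characteristic leaves is handled via the transitivity of leaf-preserving Hamiltonian diffeomorphisms (Proposition \ref{lfws}), the stabilization Theorem \ref{stable}, and the positivity of displacement energy for stable coisotropics from \cite{Gi} and \cite{U}; and the paper explicitly agrees that the unrestricted statement awaits a Floer theory for coisotropic submanifolds.
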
 

We have: 

\begin{prop}\label{onlyif} The ``only if'' part of Conjecture \ref{conjmain} is true: indeed for any submanifold $N$ of any symplectic manifold $(M,\omega)$ such that $N$ is closed as a subset and $N$ is not coisotropic, $N$ is not CH-rigid. 
\end{prop}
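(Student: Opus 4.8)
The strategy is to exhibit, under the hypothesis that $N$ is not coisotropic, a submanifold $N'\ne N$ lying in the orbit $\mathcal L(N)$ with $\delta(N,N')=0$; this makes $\delta$ degenerate on $\mathcal L(N)$, i.e.\ shows $N$ is not CH-rigid. \emph{Step 1 (local normal form).} The non-coisotropy hypothesis says that $N^{\circ}:=\{x\in N\mid (T_xN)^{\omega}\nsubset T_xN\}$ is a nonempty open subset of $N$. Since $x\mapsto\rk(\omega|_{T_xN})$ is lower semicontinuous, the set of points of $N$ where it is locally constant is open and dense; pick $x$ in its intersection with $N^{\circ}$. A short piece of symplectic linear algebra (using $(T_xN)^{\omega}\nsubset T_xN$, so that the ``symplectic part'' of $(T_xN)^\omega$ is nonzero) produces a symplectic $2$-plane $\Pi\subset (T_xN)^{\omega}$ with $\Pi\cap T_xN=0$, whence $T_xM=\Pi\oplus\Pi^{\omega}$ and $T_xN\subset\Pi^{\omega}$. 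Because $\omega|_N$ has constant rank near $x$, a Weinstein-type normal form for constant-rank submanifolds then yields a symplectic embedding
\[ \iota\co \big(W\times D^{2}(r),\ \omega_W\oplus ds\wedge dt\big)\hookrightarrow (M,\omega),\qquad \iota(0,0)=x, \]
where $W\subset\R^{2n-2}$ is a neighbourhood of $0$ with its standard form $\omega_W$, $D^{2}(r)\subset\C$ is a disk with coordinate $z=s+it$ spanning $\Pi$, and $\iota^{-1}(N)=W_N\times\{0\}$ for a submanifold $W_N\subset W$ through $0$. Establishing this normal form is the technical heart of the argument.

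\emph{Step 2 (pushoffs).} Fix a smooth $\beta\co W_N\to[0,1]$ with compact support and $\beta\equiv1$ near $0$. For $a\in D^{2}(r)$ with $|a|$ small let $N_a$ be the submanifold agreeing with $N$ outside $\iota(W\times D^{2}(r))$ and equal to $\iota(\{(w,a\beta(w))\mid w\in W_N\})$ inside; thus $N_0=N$ and $N_a\ne N$ for $a\ne0$. One checks that the family $s\mapsto N_{sa}$, $s\in[0,1]$, is swept out by a compactly supported Hamiltonian isotopy: because $N$ is flat in the $D^{2}$-direction in this chart, the contraction of $\omega$ with the velocity field of this deformation, pulled back to the moving submanifold, vanishes identically and so is exact, and a generating Hamiltonian can be chosen compactly supported of $C^{1}$-size $O(|a|)$. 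Hence $N_a\in\mathcal L(N)$ and $\delta(N,N_a)=O(|a|)$.

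\emph{Step 3 (rescaling inside the stabiliser).} Given small $a,a'\ne0$ I produce $g\in Ham(M,\omega)$ with $g(N)=N$ and $g(N_a)=N_{a'}$. Choose a cutoff $\eta\co W\to[0,1]$ equal to $1$ on a neighbourhood of $\mathrm{supp}\,\beta$ with $\mathrm{supp}\,\eta$ compactly contained in $W$, and a compactly supported $G\co D^{2}(r)\to\R$ with $G(0)=0$, $dG_0=0$, whose time-$1$ flow $\phi_G$ satisfies $\phi_G(0)=0$ and $\phi_G(as)=a's$ for all $s\in[0,1]$; such a $G$ exists since area-preserving maps of the disk fixing $0$ act very flexibly on the ray segment from $0$ to $a$ (rotate that ray onto the ray of $a'$, then adjust the radial parametrisation). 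Set $g:=\iota_{*}(\text{time-}1\text{ flow of }\eta(w)G(z))$, extended by the identity. Since $G$ vanishes to second order at $0$, the Hamiltonian $\eta(w)G(z)$ has vanishing differential along $\{z=0\}$, so its flow fixes $\iota(W\times\{0\})\supset N\cap\iota(W\times D^{2}(r))$ pointwise and hence $g(N)=N$. On $\mathrm{supp}\,\beta$, where $\eta\equiv1$, the flow acts fibrewise as $\phi_G$ in the $D^{2}$-factor and so sends $\iota(\{(w,a\beta(w))\})$ to $\iota(\{(w,a'\beta(w))\})$; together with $g(N)=N$ this gives $g(N_a)=N_{a'}$.

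\emph{Conclusion.} By the $Ham(M,\omega)$-invariance of $\delta$ we get $\delta(N,N_a)=\delta(g(N),g(N_a))=\delta(N,N_{a'})$ for all small $a,a'\ne0$; letting $a'\to 0$ and using Step 2 forces this common value to be $0$. Thus $\delta(N,N_a)=0$ while $N_a\ne N$, so $\delta$ is degenerate on $\mathcal L(N)$ and $N$ is not CH-rigid. The one genuinely delicate ingredient is the normal form of Step 1 — a constant-rank Weinstein neighbourhood theorem producing a symplectically split disk factor transverse to $N$ at a point of locally constant rank; everything else is soft, apart from the routine bookkeeping needed to coordinate the cutoffs $\beta$ and $\eta$.
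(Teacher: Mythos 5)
Your proof is correct in outline, and the Step 3 rescaling inside the stabilizer $\Sigma_N$ is a genuinely clever idea, but the route is considerably longer than the paper's and rests on two facts you assert without proving: the split local normal form of Step 1, and the criterion (Step 2) that a compactly supported isotopy of a submanifold whose flux form $f_s^*\iota_{V_s}\omega$ is exact is realized by a compactly supported Hamiltonian isotopy. The paper instead obtains the statement as Corollary \ref{ncdeg} from Lemma \ref{rnp} (with $\mathcal{O}=P=N$), whose proof is essentially one move: take $v\in T_xN^{\omega}\setminus T_xN$, pick a compactly supported $H$ with $H|_N=0$ and $dH_x(v)>0$, and set $H_n=g_n\circ H$ where $g_n$ vanishes on $[-1/n,1/n]$ and equals the identity outside $[-2/n,2/n]$. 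Each $H_n$ vanishes on a neighbourhood of $N$, so its flow lies in $\Sigma_N$; since $H_n\to H$ uniformly, the time-$t$ flow $\phi^t$ of $H$ lies in $\bar\Sigma_N$; and $\omega(v,X_H(x))=dH_x(v)\neq 0$ with $v\in T_xN^{\omega}$ forces $X_H(x)\notin T_xN$, hence $\phi^t(x)\notin N$ for small $t\neq0$ and $\bar\Sigma_N\neq\Sigma_N$. That argument needs no normal form, no discussion of Hamiltonian isotopies of submanifolds, and in fact establishes the sharper inclusion $R_N\subset\{x\in N: T_xN^{\omega}\subset T_xN\}$, which the paper then reuses for Theorems \ref{addint} and \ref{int1}(ii). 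A side remark on your Step 1: the locally-constant-rank hypothesis is not actually needed — at any non-coisotropic point $x$ one can choose a symplectic $2$-plane $\Pi\subset T_xN^{\omega}$ with $\Pi\cap T_xN=0$, thicken $\Pi^{\omega}$ to a local codimension-two symplectic submanifold containing $N$ near $x$ (choose any two functions vanishing on $N$ with the right differentials at $x$ and take their common zero locus), and then apply the Weinstein neighbourhood theorem for symplectic submanifolds. The ingredient you were missing, which short-circuits your Steps 1 and 2 entirely, is the cutoff-in-the-range trick $H\mapsto g_n\circ H$, which manufactures elements of $\bar\Sigma_N$ with essentially no geometric input.
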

\begin{proof}See Corollary \ref{ncdeg}.
\end{proof}

The ``if'' part of Conjecture \ref{conjmain} is consistent with the expectation, articulated for instance in \cite{Gi}, that coisotropic submanifolds should satisfy similar rigidity properties to Lagrangian submanifolds.\footnote{The hypothesis that $(M,\omega)$ is geometrically bounded  is necessary even in the Lagrangian case, as is shown by an example due to Sikorav described in \cite[Section 4]{Ch00}.}    As with other manifestations of this principle, its proof is obstructed by the lack of a suitable analogue of Lagrangian Floer theory for coisotropic submanifolds, but its proof becomes feasible if one imposes some additional hypotheses on the submanifold.  The following theorem illustrates this; we refer to Section \ref{coisosect} both for definitions and for other examples of hypotheses (in some cases rather more general, albeit more complicated) that are sufficient to guarantee CH-rigidity:

\begin{theorem} \label{coisomain} Let $N$ be a compact coisotropic submanifold of the symplectic manifold $(M,\omega)$, and assume that either \begin{itemize} \item[(i)] $M$ is geometrically bounded and $N$ is regular; or \item[(ii)] $M$ is compact, $N$ is stable, the group $\left\{\left.\int_{S^2}u^*\omega\right|u\co S^2\to N\right\}$ is discrete, and every leaf of the characteristic foliation of $N$ is dense in $N$.\end{itemize}
Then $N$ is CH-rigid.
\end{theorem}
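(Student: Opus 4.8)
Since $\delta$ is a pseudometric on $\mathcal{L}(N)$ that is invariant under the action of $Ham(M,\omega)$, it is nondegenerate precisely when $\delta(N,N')>0$ for every $N'\in\mathcal{L}(N)$ with $N'\neq N$. So the plan is to fix such an $N'$ and produce a positive lower bound for $\delta(N,N')$, which is allowed to depend on $N$ and on $N'$. As $N$ and $N'$ are distinct compact submanifolds of the same dimension, one of $N\setminus N'$, $N'\setminus N$ is nonempty, and since $\delta(N,N')=\delta(N',N)$ we may assume there is a point $p\in N'\setminus N$. Because $N$ is compact and $p\notin N$, we may fix a Darboux ball $B=B(p,\rho)\subset M$ with $\overline{B}\cap N=\emptyset$, shrinking $\rho$ so that in addition $N'\cap B$ is connected.

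The lower bound will be obtained by the strategy of \cite{Ch00} for Lagrangian submanifolds, whose only ingredient special to the Lagrangian case is Floer-theoretic. In that setting a compact Lagrangian $L$ in a geometrically bounded manifold admits a well-defined Floer theory in energies below the threshold $\hbar(L)>0$ given by the minimal symplectic area of a nonconstant $J$-holomorphic disc with boundary on $L$ or $J$-holomorphic sphere; from its spectral invariants one builds a $Ham(M,\omega)$-invariant quantity $\gamma(L,L'')\le\delta(L,L'')$, defined for $L''$ in the orbit via a Hamiltonian diffeomorphism taking $L$ to $L''$, and the crux of Chekanov's argument is that $\gamma(L,L'')>0$ whenever $L''\neq L$, which he deduces by localizing the relevant energy--capacity estimate near a point of $L''\setminus L$ inside a Darboux ball disjoint from $L$. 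The coisotropic analogue I would invoke is: under hypothesis (i) or (ii), $N$ admits a well-defined, suitably filtered leafwise Floer theory in energies below a threshold $\hbar(N)>0$ given by the minimal energy of the relevant pseudoholomorphic strips and spheres, yielding a $Ham(M,\omega)$-invariant $\gamma(N,N'')\le\delta(N,N'')$; its positivity at $N'$ then follows exactly as in \cite{Ch00} by localizing the energy--capacity estimate inside the ball $B$ above, which is disjoint from $N$ but meets $N'$, giving $\gamma(N,N')\ge\min\{c(\rho),\hbar(N)\}$ for some capacity-type constant $c(\rho)>0$. Since coisotropy, regularity and stability are all preserved by symplectomorphisms, $N'$ satisfies the same hypotheses as $N$, so the choice of $p\in N'\setminus N$ rather than $p\in N\setminus N'$ is harmless.

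What remains — and what I expect to be the real difficulty — is the construction of the filtered leafwise Floer theory of $N$, and in particular of $\hbar(N)>0$; this is where the two sets of hypotheses enter, in rather different ways. Under (i), $N$ is a regular coisotropic submanifold of a geometrically bounded manifold, so its characteristic foliation fibers $N$ over a symplectic leaf-space with isotropic fibers; one then sets up the associated Floer complex counting pseudoholomorphic strips, and geometric boundedness furnishes, in the manner of Chekanov and Sikorav, the $C^0$ a~priori bounds that make the relevant moduli spaces compact, so that $\hbar(N)$ can be taken to be the minimal energy of a nonconstant holomorphic strip or sphere, which is positive. Under (ii), $N$ is a stable coisotropic submanifold of a compact $M$: stability supplies the contact-type geometry transverse to the characteristic foliation that is needed to define the complex; discreteness of $\left\{\left.\int_{S^2}u^*\omega\right|u\co S^2\to N\right\}$ forces the action spectrum to be totally disconnected, which is what permits a genuinely filtered invariant — hence spectral numbers obeying an energy--capacity inequality — to exist; and the hypothesis that every characteristic leaf is dense is used both to upgrade the leafwise recurrence produced by the Floer machinery to an honest leafwise intersection point and to rule out the drift that would otherwise make $\hbar(N)$ vanish. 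The obstruction to dispensing with these hypotheses — and the reason Conjecture~\ref{conjmain} is still only a conjecture — is precisely that no Floer theory is available for an arbitrary coisotropic submanifold: the characteristic foliation need not have a Hausdorff leaf-space, the transverse geometry need not be of contact type, and the relevant action functional can be badly behaved. Everything outside the construction of this Floer theory should be a transcription, with only routine modifications, of the argument of \cite{Ch00}.
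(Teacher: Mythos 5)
Your high-level framing is sensible, but the heart of the argument---the existence of a filtered ``leafwise Floer theory'' for coisotropic submanifolds with a positive threshold $\hbar(N)$---is precisely the tool that does \emph{not} exist, and the author says so explicitly just after stating Conjecture~\ref{conjmain}: ``its proof is obstructed by the lack of a suitable analogue of Lagrangian Floer theory for coisotropic submanifolds.'' You have deferred the entire difficulty to the construction of this theory and then gestured at how stability, sphericity and leaf-density might feed into it, but there is no known way to make those gestures precise (and the roles you assign them---e.g.\ leaf-density ``upgrading leafwise recurrence to an honest leafwise intersection point''---do not match anything in the literature). In particular you are proposing a strategy that, if it worked, would prove the full Conjecture~\ref{conjmain}, which strongly suggests something is missing.

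The paper's actual proof sidesteps constructing any Floer theory for $N$ itself, and does so by two quite different mechanisms for the two cases. For (i), regularity makes $N$ fiber over a symplectic leaf space $Z$, and Lemma~\ref{prelag} observes that the preimage of any Lagrangian torus $\Lambda\subset Z$ lying in a Darboux chart is an honest compact Lagrangian submanifold $L\subset N\subset M$; since such preimages cover a dense subset of $N$, one applies Corollary~\ref{lagopen} (ultimately Theorem~\ref{disptvs}, a genuine Lagrangian Floer result) to conclude each such $L$ lies in the rigid locus $R_N$, hence $R_N=N$ and $N$ is CH-rigid. For (ii), the argument is by contradiction through the rigid-locus machinery: Proposition~\ref{lfws} shows a subgroup of $\Sigma_N$ acts transitively on each leaf, so if all leaves are dense and $R_N\neq N$ then in fact $R_N=\varnothing$ (Corollary~\ref{densealt}); stability is then used through Theorem~\ref{stable} and the known positive displacement energy of stable coisotropics (Ginzburg, Usher) to rule out $R_N=\varnothing$. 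Your reduction to a point $p\in N'\setminus N$ and a Darboux ball disjoint from $N$ is fine as far as it goes, but everything downstream of it in your sketch is unsupported; to salvage the proof you would need to replace the hypothetical coisotropic Floer theory with one of the two reductions above (to honest Lagrangian Floer theory via the leaf space, or to displacement-energy results for stable coisotropics via the rigid locus and stabilization).
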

\begin{proof} The first case is covered by Theorem \ref{regrigid}, and the second by Corollary \ref{posdisp}.
\end{proof}

\begin{remark}  For any $n\in \mathbb{Z}_+$ and any integer $k$ with $1\leq k\leq n$, Example \ref{ellipsoid} provides  compact coisotropic submanifolds $N_{k,n}$ of $\R^{2n}$ with codimension $k$ (namely, products of boundaries of ellipsoids) such that $N_{k,n}$ is CH-rigid.  By working in Darboux charts (and using the fact that Corollary \ref{denserigid} applies to arbitrary geometrically bounded ambient manifolds), one can replace $\R^{2n}$ by any $2n$-dimensional geometrically bounded symplectic manifold $(M,\omega)$.  On the other hand Theorem \ref{int1}(ii) shows that if $k\geq 2$ there are arbitrarily small smooth perturbations of $N_{k,n}$ in $M$ which are weightless.  Thus weightless and CH-rigid submanifolds coexist in all geometrically bounded symplectic manifolds $(M,\omega)$ and in all codimensions $k$ except those in which such coexistence is forbidden by  Theorem \ref{int1}(i) (which implies that a weightless submanifold has codimension $k\geq 2$) or Proposition \ref{onlyif} (which implies that a CH-rigid submanifold has codimension $k\leq n$). At the same time, for $2\leq k\leq n-1$ there are codimension $k$ submanifolds $N$ which are neither CH-rigid nor weightless: by Proposition \ref{onlyif}, Lemma \ref{RNlemma}(iii), and  Corollary \ref{lagopen} we could take $N$ to be any closed submanifold which contains a compact Lagrangian submanifold but is not coisotropic.\end{remark}

We now consider the opposite behavior, where $N$ is weightless, \emph{i.e.}, $\delta$ vanishes identically on the orbit $\mathcal{L}(N)$ of $N$ under the Hamiltonian diffeomorphism group.  Weightlessness is closely related to the lack of coisotropy of $N$.  Indeed, where a submanifold $N$ of $(M,\omega)$ is called \emph{nowhere coisotropic} if for all $x\in N$ it holds that $T_{x}N^{\omega}\not\subset T_x N$, we have:

\begin{theorem}\label{addint} All closed nowhere coisotropic submanifolds $N$ of a symplectic manifold $(M,\omega)$ are weightless.  
\end{theorem}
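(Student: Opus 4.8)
The plan is to show that when $N$ is closed and nowhere coisotropic, for any $g\in Ham(M,\omega)$ with $g(N)=:N'$ and any $\epsilon>0$, there is an $h\in Ham(M,\omega)$ with $h(N)=N'$ and $\|h\|<\epsilon$. Since the Chekanov--Hofer pseudometric is invariant under the $Ham$-action, by composing with $g^{-1}$ it suffices to produce, for each $\epsilon>0$, a Hamiltonian diffeomorphism $h$ with $\|h\|<\epsilon$ and $h(N)=g(N)$; equivalently one wants to realize the isotopy class of the "motion" $N\rightsquigarrow g(N)$ through arbitrarily cheap Hamiltonian isotopies. The key local input is that nowhere coisotropy means at every $x\in N$ there is a vector $v\in T_xN^\omega\setminus T_xN$, i.e. a direction transverse to $N$ along which $N$ can be pushed by a Hamiltonian flow whose generating function is \emph{constant to first order along $N$}: if $H$ vanishes on $N$ and $dH_x$ annihilates $T_xN$ for $x\in N$ (so $X_H(x)\in T_xN^\omega$), the time-$t$ flow still moves $N$ off itself precisely when $X_H\not\subset TN$ somewhere. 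The point of the hypothesis is that one can choose $H$ supported in an arbitrarily small neighborhood of $N$ and with arbitrarily small oscillation while still moving all of $N$ in a transverse direction.

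Concretely, I would first set up a Weinstein-type normal form: since $N$ is nowhere coisotropic, the restriction $\omega|_{TN}$ has a nontrivial "radical-complement" and a tubular neighborhood $U$ of $N$ can be coordinatized so that a distinguished transverse direction pairs symplectically with a function that is flat along $N$. Then I would build a Hamiltonian $H\co M\to\R$, supported in $U$, such that (a) $H$ vanishes on $N$ and $dH$ vanishes on $TN$ along $N$, so that $\phi_H^t(N)$ is an embedded isotopy of $N$; (b) $X_H$ is everywhere transverse to the "current position" of $N$ along the isotopy, so that $N$ sweeps out an embedded cylinder and in particular $\phi_H^1(N)\cap N=\emptyset$ can be arranged, or more relevantly $\phi_H^t(N)$ realizes a prescribed small displacement; and (c) $\osc(H)=\max H-\min H<\epsilon$, which is possible by shrinking $U$ and using that $X_H$ transverse to $N$ only requires $H$ to be large in directions \emph{off} $N$, where we have freedom to make $H$ small. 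This shows $N$ is \emph{infinitesimally displaceable with arbitrarily small energy}, hence has zero displacement energy. The harder claim is that \emph{every} element of the orbit $\mathcal{L}(N)$ is reachable cheaply, not just nearby ones: for this one chains together the local construction with a fixed (possibly expensive) Hamiltonian isotopy $g_t$ from $\mathrm{id}$ to $g$, using that $g_t(N)$ is itself a closed nowhere coisotropic submanifold for every $t$ (nowhere coisotropy is $Ham$-invariant), cutting $[0,1]$ into many small subintervals and on each one replacing the costly piece of $g_t$ by a cheap reparametrized push in a transverse direction — a "shrinking the support, fast-then-slow" argument of the kind used to show that displacement energy is not additive under concatenation.

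The main obstacle I anticipate is step (c) combined with the chaining: making each local move cheap is easy pointwise, but one must control the \emph{number} of pieces and the \emph{interaction} between consecutive pieces so the total Hofer norm does not blow up. The standard device is that if a compact set $A$ is displaced by a Hamiltonian isotopy, then for any $\epsilon>0$ it is displaced by one of energy $<\epsilon$ at the cost of taking a long time (reparametrize $t\mapsto \epsilon t$, which scales the generating Hamiltonian by $\epsilon$ but requires running up to time $1/\epsilon$), and concatenating $k$ such cheap displacements costs only $k\epsilon$, which we then re-absorb by taking $\epsilon\to 0$ faster than $k\to\infty$; here $k$ is controlled because the fixed isotopy $g_t$ has compact image and a uniform transverse-pushing neighborhood exists by compactness of $N$ and openness of the nowhere-coisotropic condition. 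I would also need to verify that the transverse direction field $t\mapsto v_t\in T(g_t(N))^\omega\setminus T(g_t(N))$ can be chosen smoothly and globally over $N\times[0,1]$, which follows since the relevant bundle of admissible directions is a nonempty open subbundle of the normal bundle along the trace, and such subbundles of vector bundles over a manifold with boundary always admit sections. Finally, one checks these cheap pieces can be glued into honest compactly supported Hamiltonian diffeomorphisms with the right endpoint behavior, completing the proof that $\delta\equiv 0$ on $\mathcal{L}(N)$.
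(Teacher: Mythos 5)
Your argument hinges on two claims that are both false, and together they are exactly the phenomena the paper is careful to navigate around with its rigid locus machinery.

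First, the topological claim ``the relevant bundle of admissible directions is a nonempty open subbundle of the normal bundle, ... and such subbundles of vector bundles ... always admit sections'' is wrong. An open subbundle whose fibers are each nonempty need not have a global section --- the unit tangent sphere bundle of $S^2$ is the standard counterexample --- and the paper itself builds its Theorem \ref{inf0} on precisely this failure: compact nowhere coisotropic submanifolds whose normal bundles admit no nonvanishing section, so that they are weightless but not infinitesimally displaceable. Your construction is essentially an attempt to prove that $N$ is infinitesimally displaceable, which is strictly stronger than the hypothesis and is sometimes false, so any proof along your lines must either quietly break on those examples or implicitly be proving something else.

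Second, the ``fast-then-slow'' reparametrization step does not reduce Hofer cost: replacing $H$ by $\epsilon H$ makes each piece cost $\epsilon\cdot\mathrm{osc}(H)$ but forces you to concatenate on the order of $1/\epsilon$ pieces to complete the same motion, so the total is $\sim\mathrm{osc}(H)$, unchanged. You also rely on compactness of $N$ for uniform control, whereas the statement only assumes $N$ is closed as a subset.

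The paper's route is genuinely different and avoids both traps. Rather than building a single cheap isotopy realizing $N\rightsquigarrow g(N)$, it shows that the Hofer-closure $\bar{\Sigma}_N$ of the stabilizer of $N$ is all of $Ham(M,\omega)$. The crucial local ingredient (Lemma \ref{rnp}) is: if $H|_N=0$ and $dH$ does not annihilate some $v\in T_xN^\omega\setminus T_xN$, then the flow $\phi^t$ of $H$ lies in $\bar{\Sigma}_N$ even though it moves $x$ off $N$. The reason is a cutoff trick --- the flows of $H_n=g_n\circ H$, where $g_n$ flattens $H$ near its zero level, each fix $N$ exactly, and $H_n\to H$ uniformly, hence $\phi^t_n\to\phi^t$ in Hofer metric. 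Applying this at every point of $N$ shows the rigid locus is empty, and then Banyaga's fragmentation lemma promotes the pointwise statement to the global statement $\bar{\Sigma}_N=Ham(M,\omega)$ (Lemma \ref{RNlemma}(iii)), which is equivalent to weightlessness by Proposition \ref{stabclosure}. Nowhere does this require a global transverse vector field, nor does it ever need to estimate the cost of a chain of local moves: only the cost of approximating a fixed $\phi^t$ by stabilizer elements, which is free by uniform convergence.
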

\begin{proof} See Corollary \ref{vanish}. 
\end{proof}

Of course any submanifold $N$ of $M$ such that $\dim N<\frac{1}{2}\dim M$ is nowhere coisotropic, as is any symplectic submanifold $N$ of $M$ of any positive codimension---thus such submanifolds are always weightless (when they are closed as subsets).  When $\dim X<\binom{\dim M-\dim X}{2}$, the residual set $\mathcal{U}$ of Theorem \ref{int1}(ii) (as constructed in Section \ref{genwt}) in fact consists precisely of nowhere coisotropic embeddings.  However once $\dim X\geq \binom{\dim M-\dim X}{2}$, it can no longer be expected to hold that nowhere coisotropic embeddings are dense in the space of closed embeddings, and $\mathcal{U}$ is taken to consist of embeddings $f\co X\to M$ which behave in a suitably generic way along their ``coisotropic loci'' $\{x\in X|(f_*T_{x}X)^{\omega}\subset f_*T_{x}X\}$; generally the relevant condition involves higher-order derivatives of $f$.

Various other sorts of rigidity or nonrigidity of subsets are often studied in symplectic topology; let us discuss the relation of CH-rigidity and weightlessness to some of these other notions.  First of all, recall that a closed subset $N\subset M$ is called displaceable if there is $\phi\in Ham(M,\omega)$ such that $\phi(N)\cap N=\varnothing$. While nondisplaceability is a sort of rigidity,  a CH-rigid subset can certainly be displaceable; for instance this holds if $M=\R^{2n}$ and $N$ is a compact hypersurface or a compact Lagrangian submanifold.  On the other hand a weightless submanifold $N$ might be nondisplaceable for trivial topological reasons, \emph{e.g.} if $[N]\cap [N]$ is nonzero in $H_{\dim M-2\dim N}(M)$.  Thus the behavior of the Chekanov--Hofer pseudometric $\delta$ is somewhat orthogonal to questions of displaceability.

The \emph{displacement energy} of  a closed subset $N\subset M$ is by definition \[ e(N,M)=\inf\left\{\|\phi\| \left| \phi\in Ham(M,\omega),\,\phi(N)\cap N=\varnothing\right.\right\};\]  $N$ might also be considered to be rigid if one has $e(N,M)>0$, and this notion of rigidity is somewhat more closely connected to ours.  Chekanov's original proof in \cite{Ch00} that compact Lagrangian submanifolds of geometrically bounded symplectic manifolds are CH-rigid used his famous theorem from \cite{Ch98} that such submanifolds always have positive displacement energy.   Indeed, rephrasing his argument into our language, he first showed that a Lagrangian submanifold would have to either be CH-rigid or weightless, and then he appealed to the following obvious fact to derive a contradiction:
\begin{prop} \label{obvious} If $N\subset M$ is a displaceable closed subset which is weightless then $e(N,M)=0$.
\end{prop}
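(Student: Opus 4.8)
The plan is to unwind the definitions directly. Suppose $N$ is a displaceable closed subset which is weightless; I must show $e(N,M)=0$. Fix $\ep>0$; the goal is to produce $\phi\in Ham(M,\omega)$ with $\phi(N)\cap N=\varnothing$ and $\|\phi\|<\ep$.

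Since $N$ is displaceable, there is some $\psi\in Ham(M,\omega)$ with $\psi(N)\cap N=\varnothing$. In particular $\psi(N)$ lies in the orbit $\mathcal{L}(N)$, so $\delta(N,\psi(N))$ is defined, and since $N$ is weightless we have $\delta(N,\psi(N))=0$. By the definition of $\delta$ as an infimum of $\|g\|$ over those $g$ with $g(N)=\psi(N)$, there exists $g\in Ham(M,\omega)$ with $g(N)=\psi(N)$ and $\|g\|<\ep$. Now $g(N)=\psi(N)$ is disjoint from $N$, so $g$ itself displaces $N$; hence $e(N,M)\le\|g\|<\ep$. As $\ep>0$ was arbitrary, $e(N,M)=0$.

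There is essentially no obstacle here: the only point requiring a word of care is that $\delta(N,\psi(N))$ genuinely makes sense, \emph{i.e.} that $\psi(N)$ is in the $Ham(M,\omega)$-orbit of $N$, which is immediate since $\psi\in Ham(M,\omega)$. One could alternatively phrase the whole argument in one line: displaceability gives a point $\psi(N)$ of $\mathcal{L}(N)$ on which the pseudometric-distance-to-$N$ bounds $e(N,M)$ from above, and weightlessness makes that distance zero. I would present it in the expanded form above for clarity.
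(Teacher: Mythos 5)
Your proof is correct and follows essentially the same line of reasoning as the paper's: take a displacing image $N'=\psi(N)$, use weightlessness to make $\delta(N,N')=0$, and note that any $g$ realizing a small Hofer norm with $g(N)=N'$ itself displaces $N$. Nothing further to add.
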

\begin{proof} That $N$ is displaceable means that there is $N'\in\mathcal{L}(N)$ such that $N\cap N'=\varnothing$, and that $N$ is weightless implies that we have $\delta(N,N')=0$, \emph{i.e.} that for all $n\in \mathbb{Z}_+$ there is $\phi_n\in Ham(M,\omega)$ such that $\phi_n(N)=N'$ and $\|\phi_n\|<\frac{1}{n}$.  Since the $\phi_n$ disjoin $N$ from itself the result follows.
\end{proof}

It is not clear whether, conversely, if $e(N,M)=0$ then $N$ must be weightless.  It is true, though, that if there is a fixed $N'\in\mathcal{L}(N)$ such that $\delta(N,N')=0$ and $N\cap N'=\varnothing$ then $N$ is weightless, as may be deduced from Lemma \ref{RNlemma}.  In other words, rewriting the definition of $e(N,M)$ as \[ e(N,M)=\inf\left\{\delta(N,N')|N'\in\mathcal{L}(N),\,N\cap N'=\varnothing\right\},\] we see that if the above infimum  both is equal to zero \emph{and is attained} then $N$ is weightless.

Still another notion of nonrigidity for a (say compact) submanifold $N$ of a symplectic manifold $(M,\omega)$ is \emph{infinitesimal displaceability}: $N$ is said to be infinitesimally displaceable if there is a smooth function $H\co M\to \R$ whose Hamiltonian vector field $X_H$ has the property that, for all $x\in N$, $X_{H}(x)\notin T_xN$.  Since we assume that $N$ is compact, where $\{\phi_t\}_{t\in \R}$ denotes the flow of $X_H$, if $N$ is infinitesimally displaceable then we will have $\phi_t(N)\cap N=\varnothing$ for all sufficiently small nonzero $t$, in view of which $N$ clearly has $e(N,M)=0$.  We show in Proposition \ref{infwt} that in fact infinitesimally displaceable submanifolds are weightless.  However the converse need not be true, even if we assume that the weightless submanifold is displaceable and hence has zero displacement energy by Proposition \ref{obvious}; indeed there may be purely differential-topological obstructions
to the existence of the vector field $X_H$.  This leads to the following:

\begin{theorem}\label{inf0} Let $(M,\omega)$ be any $4k$-dimensional symplectic manifold where $k$ is a positive integer.  Then there is a compact submanifold $N\subset M$ of dimension $2k$ such that $N$ is not infinitesimally displaceable but $e(N,M)=0$.
\end{theorem}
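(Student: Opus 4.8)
The plan is to build $N$ as a small, weightless perturbation of a rescaled copy of $X_0:=(\mathbb{RP}^2)^k$ placed inside a Darboux chart of $M$. Weightlessness will give $e(N,M)=0$, while the fact that $X_0$ has non-trivial normal bundle in $\R^{4k}$ will obstruct infinitesimal displaceability; note that $\dim X_0=2k$ is exactly half of $\dim M=4k$, so the normal bundle of $N$ in $M$ will have the same rank as $\dim N$, which is precisely the situation in which an Euler-class obstruction can be present.

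First I would record the topology. Fix an embedding $\mathbb{RP}^2\hookrightarrow\R^4$ and take its $k$-fold product to get an embedding $g_0\co X_0\to(\R^4)^k=\R^{4k}$ of the closed connected $2k$-manifold $X_0$. Its normal bundle $\nu_0$ has rank $2k=\dim X_0$ and admits no nowhere-vanishing section: by obstruction theory the only obstruction to such a section of a rank-$r$ bundle over an $r$-manifold is the Euler class (in coefficients twisted by $w_1$ of the bundle when, as here, the bundle is non-orientable), and the twisted Euler number of $\nu_0$ is---up to sign---the $k$-th power of the normal Euler number of $\mathbb{RP}^2\subset\R^4$, which by a classical theorem of Whitney and Massey is $\equiv 2\pmod{4}$, hence nonzero.

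Next I would transplant this into $M$ and arrange weightlessness and displaceability. Pick a symplectic embedding $\psi\co B(r)\to M$ of a Euclidean ball; for small $\epsilon$ the map $\psi\circ(\epsilon g_0)$ is an embedding of $X_0$ whose image is a compact submanifold of $M$ lying in a small ball about $\psi(0)$, and since the rescaling is a diffeomorphism its normal bundle in $M$ is still isomorphic to $\nu_0$. This submanifold is displaceable: one may translate the small ball off itself within $\psi(B(r))$ by a compactly supported Hamiltonian. Since $\dim X_0=2k<4k-1=\dim M-1$, Theorem \ref{int1}(ii) gives a residual set $\mathcal{U}$ of weightless closed embeddings $X_0\to M$. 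The embeddings $C^{\infty}$-close to $\psi\circ(\epsilon g_0)$ form a nonempty open set, all of whose members are isotopic to $\psi\circ(\epsilon g_0)$ (hence have normal bundle isomorphic to $\nu_0$) and still have image in a ball inside the chart (hence still have displaceable image); so $\mathcal{U}$ meets this set. Pick $f\in\mathcal{U}$ there and set $N:=f(X_0)$.

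Finally I would assemble the conclusion. The submanifold $N$ is compact of dimension $2k$; it is weightless because $f\in\mathcal{U}$, and it is displaceable, so $e(N,M)=0$ by Proposition \ref{obvious}. If $N$ were infinitesimally displaceable via $H\co M\to\R$ with $X_H(x)\notin T_xN$ for all $x\in N$, then $x\mapsto X_H(x)\bmod T_xN$ would be a nowhere-vanishing section of the normal bundle $(TM|_N)/TN\cong\nu_0$, contradicting the second paragraph. The step I expect to be the real obstacle is precisely that topological input: identifying a closed $2k$-manifold that embeds in $\R^{4k}$ with normal bundle admitting no nonvanishing section and carrying out the Euler-number bookkeeping for the product; everything else is soft, since weightlessness is imported wholesale from Theorem \ref{int1}(ii), displaceability is forced by hand inside a chart, and the failure of infinitesimal displaceability reduces to the elementary remark that an everywhere-transverse vector field along $N$ would trivialize a piece of its normal bundle of the critical rank.
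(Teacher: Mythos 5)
Your overall strategy is the same as the paper's: find a compact $2k$-manifold $N_0\subset\R^{4k}$ whose normal bundle has no nonvanishing section (the content of Lemma~\ref{nonormal}), transport a rescaled copy into a displaceable Darboux ball in $M$, perturb to a nearby weightless embedding using Theorem~\ref{int1}(ii), conclude $e(N,M)=0$ via Proposition~\ref{obvious}, and observe that infinitesimal displaceability would trivialize the normal bundle. Where you genuinely diverge is in the proof of Lemma~\ref{nonormal}. The paper takes $N_0$ to be a Lagrangian submanifold diffeomorphic to $T^{2k}\#T^{2k}\#Q^{2k}$ produced by Polterovich's Lagrangian surgery; since Lagrangian normal bundles are isomorphic to tangent bundles, the obstruction reduces to the Euler characteristic $\chi(N_0)=-4\neq 0$. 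You instead take $N_0=(\mathbb{RP}^2)^k$ with the $k$-fold product of an embedding $\mathbb{RP}^2\hookrightarrow\R^4$, and argue that the twisted normal Euler number is (up to sign) the $k$-th power of the normal Euler number of $\mathbb{RP}^2$ in $\R^4$, which by the Whitney--Massey theorem is $\pm 2$. Both arguments are correct. Yours avoids Lagrangian surgery entirely and uses only classical surface topology, which is arguably lighter machinery; the trade-offs are that you need multiplicativity of the twisted Euler number under external direct sums over a product (true, but worth a sentence justifying how the local systems match up under the K\"unneth identification), whereas the paper's Euler-characteristic bookkeeping for a connected sum is completely elementary once the Lagrangian embedding is in hand. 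Both routes correctly require $N_0$ nonorientable, since for an orientable $N_0\subset\R^{4k}$ of middle dimension the (untwisted) normal Euler class would vanish by null-homology. The rest of your proposal---the rescaling, displaceability inside the chart, invoking Theorem~\ref{int1}(ii), the isotopy argument preserving the normal bundle, and the final contradiction---matches the paper's proof essentially verbatim.
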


\begin{proof} This will follow quickly from:
\begin{lemma} \label{nonormal} For any $k\in\mathbb{Z}_+$ there is a compact submanifold $N_0\subset \R^{4k}$ of dimension $2k$ such that the normal bundle of $N_0$ has no nonvanishing sections. 
\end{lemma}
To deduce Theorem \ref{inf0} from Lemma \ref{nonormal}, note that after composing the embedding of $N_0$ first with a suitable rescaling of $\R^{4k}$ and then with a Darboux chart for $(M,\omega)$, we can arrange for (a copy of) $N_0$ to be contained in the interior of a  closed Darboux ball $B$ which is displaceable in $M$.  By Theorem \ref{int1}(ii), arbitrarily $C^{\infty}$-close to this copy of $N_0$ there is a weightless submanifold $N$; in particular we can arrange for $N$ to still be contained in $B$ and to have normal bundle which is isomorphic to the normal bundle of $N_0$.  Since $N$ is weightless and, being contained in $B$, is displaceable, we have $e(N,M)=0$ by Proposition \ref{obvious}.  But $N$ cannot be infinitesimally displaceable, since any vector field which is nowhere tangent to $N$ would give rise to a nonvanishing section of the normal bundle to $N$.

Lemma \ref{nonormal} was originally proven by Mahowald in 1964 \cite{Ma64}, but here is a construction that symplectic topologists may find more appealing.  Let $Q^{2k}$ be the mapping torus of a reflection of the sphere $S^{2k-1}$.  Then where $T^{2k}$ is the $2k$-dimensional torus, using Lagrangian surgery as in \cite[Theorem 1a]{P91} one obtains a Lagrangian submanifold $N_0\subset \R^{4k}$ diffeomorphic to the connected sum $T^{2k}\#T^{2k}\# Q^{2k}$.  Now the Euler characteristic of $N_0$ is $-4$, and so the tangent bundle $TN_0$ has no nonvanishing sections (the nonorientability of $N_0$ is no problem here, see \emph{e.g.} \cite[Corollary 39.8]{St}).  But since $N_0$ is Lagrangian, its normal bundle is isomorphic to its tangent bundle.
\end{proof}

As far as I know, these are the first examples in the literature of submanifolds that are not infinitesimally displaceable but have zero displacement energy. It was essential for Lemma \ref{nonormal} that the submanifold $N_0$ was nonorientable, since if $N_0$ were orientable then the Euler class of the normal bundle of $N_0$ would be the restriction of a cohomology class from $\R^{4k}$ and so would be zero, and since $\dim N_0=\frac{1}{2}\dim \R^{4k}$ the Euler class of the normal bundle is the only obstruction to the existence of a nonvanishing section.  (In the nonorientable case the mod 2 Euler class necessarily vanishes for similar reasons, but the integral twisted  Euler class in the cohomology with local coefficients associated to the first Stiefel-Whitney class of the normal bundle can be nonvanishing, and it is this twisted Euler class which is the obstruction to finding a section.)  If we instead consider submanifolds $N_0\subset \R^{2n}$ of codimension less than $n$, then there will be higher-order obstructions to the existence of a nonvanishing normal vector field which can in principle be nontrivial when $N_0$ is orientable, though examples of this in the literature seem to be rather scarce.  Some examples of embeddings of orientable manifolds into Euclidean spaces for which the secondary obstruction to the existence of a normal section is nontrivial are given in \cite{Mas}, though in these cases the ambient Euclidean dimension is odd.  It seems likely that a product $N_0$ of two of Massey's examples would again admit no nonvanishing normal fields, and then the same argument as is used in the proof of Theorem \ref{inf0} would show that a small perturbation of $N_0$ has zero displacement energy without being infinitesimally displaceable.

\subsection{Outline of the paper} \label{outline} The upcoming Section \ref{norms} introduces some terminology and makes some observations concerning the pseudometrics that are induced on the orbits of the action of a group when that group is endowed with an invariant norm; of course the case of interest to us is the Hofer norm on the Hamiltonian diffeomorphism group of a symplectic manifold $(M,\omega)$, acting on closed subsets of $N$.  In particular we prove the simple but conceptually important Proposition \ref{stabclosure}, which connects the behavior of the pseudometric $\delta$ on the orbit $\mathcal{L}(N)$ of $N$ to the properties of the closure $\bar{\Sigma}_N$ of the stabilizer $\Sigma_N$ of 
$N$ with respect to the norm.

Section \ref{hyp} contains the proof of Theorem \ref{int1}(i), asserting that closed hypersurfaces are CH-rigid. Given existing results in the literature, this is much the easier half of that theorem: when the hypersurface separates the ambient manifold the result follows from the energy-capacity inequality proven in \cite[Theorem 1.1(ii)]{LM}, and the general case can be reduced to the (possibly disconnected) separating case by passing to finite covers.

Section \ref{rns} introduces a fundamental tool for the other main results of the paper: the \textbf{rigid locus} $R_N$ of a closed subset $N$ of a symplectic manifold $(M,\omega)$.  Where as before $\Sigma_N$ is the stabilizer of $N$ and $\bar{\Sigma}_N$ is its closure with respect to the Hofer norm, we have by definition \[ R_N=\bigcap_{\phi\in \bar{\Sigma}_N}\phi^{-1}(N).\]  Thus $R_N$ is a closed subset of $N$, invariant under the action of $\Sigma_N$ on $N$.  Among the key properties of $R_N$ are that $R_N=N$ if and only if $N$ is CH-rigid, while (modulo a trivial exception) $R_N=\varnothing$ if and only if $N$ is weightless.  The first of these statements is an obvious consequence of Proposition \ref{stabclosure}, but the second is deeper: its proof depends on Banyaga's fragmentation lemma.  Lemma \ref{rnp} then provides our main tool for proving either the failure of CH-rigidity or the weightlessness of a submanifold: by means of an explicit construction of certain kinds of elements of $\bar{\Sigma}_N$, we show that points at which a submanifold $N$ is not coisotropic cannot belong to $R_N$, from which Proposition \ref{onlyif} and Theorem \ref{addint} immediately follow; moreover Lemma \ref{rnp} is structured so as to facilitate an inductive argument which is later used in Section \ref{genwt} to prove Theorem \ref{int1}(ii) (asserting that generic closed embeddings of codimension at least two are weightless).  In the opposite direction we prove in Corollary \ref{lagopen} that (assuming $(M,\omega)$ to be geometrically bounded) if $N\subset M$ is a closed subset which contains a compact Lagrangian submanifold $L$, then we have $L\subset R_N$;  this gives a new proof of Chekanov's theorem from \cite{Ch00} that compact Lagrangian submanifolds are CH-rigid (by setting $L=N$), and it is later used to prove that some other classes of coisotropic submanifolds are CH-rigid as well.  Finally we prove Theorem \ref{stable}, which asserts that if a compact subset $N\subset M$ has a rigid locus with zero displacement energy, then the rigid locus of its ``stabilization'' $\hat{N}=N\times S^1\subset M\times \R^2$ is empty; this is used for some of the results of Section \ref{coisosect}.

Section \ref{coisosect} contains our results on the CH-rigidity of certain classes of coisotropic submanifolds, together with some illustrative examples.  These results fit roughly speaking into two rather distinct classes: those where the submanifold is CH-rigid because most of its points lie on compact Lagrangian submanifolds so that we can apply Corollary \ref{lagopen} (as we explain, this commonly occurs in the theory of symplectic reduction), and those where the submanifold is CH-rigid because it is stable in the sense of \cite{Gi} and because most of its points lie on dense leaves of the characteristic foliation, allowing us to make use of results from \cite{Gi} and \cite{U}.

Finally, Section \ref{genwt} proves Theorem \ref{int1}(ii).  The argument is rather involved, but here is a brief description of the idea.  Following the strategy introduced in Section \ref{rns}, the goal is to show that a generic submanifold $N\subset M$ of codimension larger than $1$ has empty rigid locus $R_N$.  Now a simple case of Lemma \ref{rnp} shows that $R_N\subset \{x\in N|T_{x}N^{\omega}\subset T_xN\}$.  Denoting the set on the right by $N_1$, one can use Thom's  jet transversality theorem to show that, for generic $N$, $N_1$ is a submanifold of $N$, with positive codimension since we assume that the codimension of $N$ is at least $2$.  Once we know that $R_N\subset N_1$ and that $N_1$ is a submanifold, another application of Lemma \ref{rnp} shows that in fact \[ R_N\subset \{x\in N|T_{x}N^{\omega}\subset T_xN_1\},\] and one can reasonably expect that the set on the right hand side above would generically be smaller than $N_1$.  This suggests an inductive scheme in which we produce, for  any positive integer $r$ and generic $N$ (with the precise genericity condition depending on $r$), a sequence of submanifolds $N=N_0\supset N_1\supset N_2\supset\cdots\supset N_r$, where each inclusion has positive codimension and repeated applications of Lemma \ref{rnp} show that if $R_N\subset N_i$ then $R_{N}\subset N_{i+1}$.  Since the dimensions of the $N_i$ are strictly decreasing the $N_i$ would eventually terminate in the empty set, implying that $R_N=\varnothing$ and hence that $N$ is weightless.  This is essentially what we do, modulo a technical issue that forces us to work separately in each member of a countable (finite if $N$ is compact) open cover of $N$.  The statement that $N_i$ is a manifold of the expected dimension is obtained by appealing to the jet transversality theorem for the $i$-jet of the embedding of $N$.

\subsection{Notation and Conventions}\label{conv}\begin{itemize}\item  All manifolds and submanifolds are assumed to be without boundary unless the modifier ``with boundary'' is explicitly added.
\item Submanifolds are always assumed to be embedded.  A ``closed submanifold'' $N$ of a manifold $M$ is a submanifold of $M$ which is closed as a subset (it need not be compact if $M$ is not compact).
\item If $(M,\omega)$ is a symplectic manifold, a compactly supported smooth function $H\co [0,1]\times M\to\R$ determines a time-dependent Hamiltonian vector field $\{X_{H_t}\}_{0\leq t\leq 1}$ by the prescription that $\omega(\cdot,X_{H_t})=d\left(H(t,\cdot)\right)$.
\item $Ham(M,\omega)$ is the group of time-one maps of the time-dependent Hamiltonian vector fields generated by \emph{compactly-supported} 
smooth functions $H\co [0,1]\times M\to \R$.  
\item If $V\subset M$ is an open subset, then $Ham^c(V)$ denotes the subgroup of $Ham(M,\omega)$ consisting of those time-one maps of time-dependent Hamiltonian vector fields generated by Hamiltonian functions $H\co [0,1]\times M\to \R$ having compact support contained in $[0,1]\times V$.
\item For a closed subset $N\subset M$, we denote by $\Sigma_N$ the subgroup of $Ham(M,\omega)$ consisting of Hamiltonian diffeomorphisms $\phi$ such that $\phi(N)=N$.
\end{itemize}

\section{Norms on groups}\label{norms}

We collect in this section some conventions and observations regarding norms on groups and their associated homogeneous spaces; this will serve as part of the framework for the rest of the paper.

\begin{dfn} If $G$ is a group, an \emph{invariant norm} on $G$ is a map $\|\cdot\|\co G\to [0,\infty)$ with the following properties:
\begin{itemize}\item For $g\in G$, $\|g\|\geq 0$ with equality if and only if $g$ is the identity.
\item For all $g,h\in G$ we have: \begin{align*} \|g^{-1}\|&=\|g\| \\ \|gh\|&\leq \|g\|+\|h\| \\ \|h^{-1}gh\|&=\|g\| \end{align*}
\end{itemize}
\end{dfn}

Invariant norms on a group $G$ are in one-to-one correspondence with bi-invariant metrics: given an invariant norm $\|\cdot\|$ one obtains a bi-invariant metric $d$ by setting $d(g,h)=\|gh^{-1}\|$, and conversely one can recover $\|\cdot\|$ from $d$ by setting $\|g\|=d(g,e)$ where $e$ is the identity.  In particular an invariant norm on $G$ induces naturally a (metric) topology on $G$, with respect to which $G$ is readily seen to be a topological group. 

Now suppose that $G$ acts transitively on the left on some set $S$.  Associated to the invariant norm $\|\cdot\|$ is a function $\delta\co S\times S\to [0,\infty)$ defined by \[ \delta(s_1,s_2)=\inf\left\{\|g\|\left|gs_1=s_2\right.\right\}.\]

It is straightforward to verify from the axioms for $\|\cdot\|$ that $\delta$ defines a $G$-invariant pseudometric on $S$: in other words we have, for $s_1,s_2,s_3\in S$ and $g\in S$, the following identities: \begin{align*} \delta(s_1,s_1)&=0 \\ \delta(s_1,s_2)&=\delta(s_2,s_1) \\ \delta(s_1,s_3)&\leq \delta(s_1,s_2)+\delta(s_2,s_3) \\ \delta(gs_1,gs_2)&=\delta(s_1,s_2).\end{align*}

Whether the pseudometric $\delta$ on the $G$-set $S$ is in fact a metric (\emph{i.e.}, whether it holds that $\delta(s_1,s_2)>0$ whenever $s_1\neq s_2$) is a more subtle issue, which is partly addressed by the following:

\begin{prop}\label{stabclosure} Let $\|\cdot\|$ be an invariant norm on the group $G$, which acts transitively on the left on the set $S$, inducing the invariant pseudometric $\delta$ as above.  Choose a basepoint $s_0\in S$, and define \[ H=\{g\in G|gs_0=s_0\}.\]  Then the closure of $H$ with respect to the topology on $G$ induced by $\|\cdot\|$ is a subgroup, and is given by \begin{equation}\label{Hbar} \bar{H}=\{g\in G|\delta(s_0,gs_0)=0\}.\end{equation}  In particular, $\delta$ is a metric on $S$ if and only if $H$ is closed.
\end{prop}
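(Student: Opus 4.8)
The plan is to reduce the entire statement to the displayed formula \eqref{Hbar}: once $\bar H=\{g\in G\mid \delta(s_0,gs_0)=0\}$ is established, the fact that $\bar H$ is a subgroup is automatic (either from the already-noted fact that $G$ is a topological group under the metric $d(g,h)=\|gh^{-1}\|$, or from the explicit description of the right-hand side, which I will check is a subgroup anyway), and the concluding ``if and only if'' follows in one line. Before starting I would record the elementary consequences of the three norm axioms that every $\epsilon$-argument below will use: right translations $g\mapsto ga$ are $d$-isometries because $d(ga,ha)=\|gh^{-1}\|$; left translations $g\mapsto ag$ are $d$-isometries because $d(ag,ah)=\|agh^{-1}a^{-1}\|=\|gh^{-1}\|$ by conjugation-invariance; and inversion $g\mapsto g^{-1}$ is a $d$-isometry because $d(g^{-1},h^{-1})=\|g^{-1}h\|=\|gh^{-1}\|$ (conjugation-invariance followed by inversion-invariance). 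The conjugation-invariance axiom is precisely what makes the last two statements work, and it is the only input that is not completely formal.

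Write $K=\{g\in G\mid \delta(s_0,gs_0)=0\}$; the goal is $\bar H=K$. First I would verify from the pseudometric axioms alone that $K$ is a subgroup: if $g,h\in K$ then $\delta(s_0,ghs_0)\le\delta(s_0,gs_0)+\delta(gs_0,ghs_0)=\delta(s_0,gs_0)+\delta(s_0,hs_0)=0$ using the triangle inequality and $G$-invariance of $\delta$, and $\delta(s_0,g^{-1}s_0)=\delta(gs_0,s_0)=\delta(s_0,gs_0)=0$ using $G$-invariance and symmetry; since $e\in K$ this makes $K$ a subgroup, and this observation is also what will deliver the subgroup claim at the end.

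Next come the two inclusions. For $\bar H\subseteq K$: given $g\in\bar H$, pick $h_n\in H$ with $d(h_n,g)\to 0$; then $h_n^{-1}\in H$, so the element $gh_n^{-1}$ sends $s_0$ to $g(h_n^{-1}s_0)=gs_0$, and $\|gh_n^{-1}\|=\|(h_ng^{-1})^{-1}\|=\|h_ng^{-1}\|=d(h_n,g)\to 0$, whence $\delta(s_0,gs_0)=0$, i.e.\ $g\in K$. For $K\subseteq\bar H$: given $g\in K$, pick $k_n\in G$ with $k_ns_0=gs_0$ and $\|k_n\|<1/n$; then $g^{-1}k_n$ fixes $s_0$, so $g^{-1}k_n\in H$, and $d(g^{-1}k_n,g^{-1})=\|g^{-1}k_ng\|=\|k_n\|<1/n$ by conjugation-invariance, so $g^{-1}\in\bar H$. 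Since inversion is a $d$-homeomorphism and $H^{-1}=H$, taking closures gives $(\bar H)^{-1}=\bar H$, hence $g=(g^{-1})^{-1}\in\bar H$. This proves \eqref{Hbar}, and with it $\bar H=K$ is a subgroup.

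Finally, for the last sentence: combining $G$-invariance of $\delta$ with transitivity, $\delta$ is a metric on $S$ exactly when $\delta(s_0,s)>0$ for every $s\ne s_0$ (given $s_1\ne s_2$, choose $g$ with $gs_0=s_1$ and note $\delta(s_1,s_2)=\delta(s_0,g^{-1}s_2)$ with $g^{-1}s_2\ne s_0$). Writing $s=gs_0$, the condition $\delta(s_0,s)=0$ with $s\ne s_0$ says exactly that some $g\in\bar H=K$ has $g\notin H$; so $\delta$ is a metric iff $\bar H=H$, i.e.\ iff $H$ is closed. I do not expect a genuine obstacle here: the content is bookkeeping, and the only point requiring care is tracking left versus right multiplication so that the approximating elements really do fix $s_0$ (for the $\bar H\subseteq K$ direction) or move $s_0$ to $gs_0$ (for $K\subseteq\bar H$), invoking conjugation-invariance exactly where left translates or inverses appear and using that $H$ is itself a subgroup so that $h_n^{-1}\in H$ and $H^{-1}=H$.
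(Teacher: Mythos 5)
Your proof is correct and follows essentially the same approach as the paper: both inclusions are proved by the same kind of $\epsilon$-approximation, and the final biconditional is deduced the same way. The only (immaterial) stylistic difference is in the inclusion $K\subseteq\bar H$: the paper forms the element $h^{-1}g\in H$ and checks $d(h^{-1}g,g)=\|h\|$ directly (a computation that needs only cancellation and inversion-invariance), whereas you form $g^{-1}k_n\in H$, show it approaches $g^{-1}$ via conjugation-invariance, and then invoke $(\bar H)^{-1}=\bar H$ to conclude $g\in\bar H$ — one extra step, but equally valid.
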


\begin{proof} The fact that $\bar{H}$ is a subgroup just follows from the general elementary fact that, in any topological group, the closure of a subgroup is still a subgroup.

 If $g\in \bar{H}$, then for all $\ep>0$ there is $h\in H$ such that $\|gh^{-1}\|<\ep$.  Since $h\in H$ we have $h^{-1}s_0=s_0$.  Thus $\delta(s_0,gs_0)=\delta(s_0,gh^{-1}s_0)<\ep$.  Since $\ep>0$ was arbitrary this shows that $\delta(s_0,gs_0)=0$.

Conversely, if $\delta(s_0,gs_0)=0$, by the definition of $\delta$ for any $\ep>0$ we can find $h\in G$ such that $hs_0=gs_0$ and $\|h\|<\ep$.  Then $h^{-1}gs_0=s_0$, \emph{i.e.} $h^{-1}g\in H$, and we have $d(h^{-1}g,g)=\|h^{-1}gg^{-1}\|=\|h\|<\ep$.  Since $\ep$ was arbitrary this shows that $g\in \bar{H}$.

This proves the characterization (\ref{Hbar}) of $\bar{H}$. The last sentence follows immediately: if $H=\bar{H}$ then the required nondegeneracy  holds using the $G$-invariance of $\delta$ and the transitivity of the action, while if $\bar{H}\setminus H$ contains some element $g$ then we will have $\delta(s_0,gs_0)=0$ even though $s_0\neq gs_0$.
\end{proof}

The rest of the paper specializes to the following situation.  
Let $(M,\omega)$ be a symplectic manifold and let $N\subset M$ be a  closed subset.  For the group $G$ we use the group $Ham(M,\omega)$ of compactly-supported Hamiltonian diffeomorphisms of $M$, and for the set $S$ we use \[ S=\mathcal{L}(N):=\{\phi(N)|\phi\in Ham(M,\omega)\}.\]  On $Ham(M,\omega)$ we have the Hofer norm of \cite{Ho} (which was proven to be nondegenerate on all symplectic manifolds on \cite{LM}): where for a smooth compactly supported function $H\co [0,1]\times M\to \R$ we denote by $\phi_{H}^{1}$ the time-one map of $H$, one sets, for $\phi\in Ham(M,\omega)$, \[ \|\phi\|=\inf\left\{\left.\int_{0}^{1}\left(\max_M H(t,\cdot)-\min_M H(t,\cdot)\right)dt\right|\phi_{H}^{1}=\phi\right\}.\]  

Using the obvious left action of $Ham(M,\omega)$ on $\mathcal{L}(N)$, the Hofer norm $\|\cdot\|$ induces a ``Chekanov--Hofer'' pseudometric $\delta$ on $\mathcal{L}(N)$.  
As in the introduction, we use the following shorthand:

\begin{dfn}\begin{itemize}\item[(i)] A closed subset $N\subset M$ is called \emph{weightless} if the Chekanov--Hofer pseudometric $\delta$ on $\mathcal{L}(N)$ vanishes identically.
\item[(ii)] A closed subset $N\subset N$ is called \emph{CH-rigid} if the Chekanov--Hofer pseudometric $\delta$ on $\mathcal{L}(N)$ is a nondegenerate metric.\end{itemize}
\end{dfn}

In other words, $N$ is weightless if, whenever $N'\subset M$ has the property that $N'=\phi(N)$ for some $\phi\in Ham(M,\omega)$, the diffeomorphism $\phi$ can in fact be chosen to have arbitrarily low energy, while $N$ is CH-rigid if this holds only when $N=N'$.

Where as in Section \ref{conv} $\Sigma_N$ denotes the stabilizer of $N$ under the action of $Ham(M,\omega)$ (\emph{i.e.}, $\Sigma_N=\{\phi\in Ham(M,\omega)|\phi(N)=N\}$), Proposition \ref{stabclosure} provides another characterization of these properties: $N$ is CH-rigid if and only if $\bar{\Sigma}_N=\Sigma_N$, while $N$ is weightless if and only if $\bar{\Sigma}_N=Ham(M,\omega)$, where of course $\bar{\Sigma}_N$ denotes the closure of $\Sigma_N$ in $Ham(M,\omega)$ with respect to the Hofer norm.

\section{Hypersurfaces}\label{hyp}

The goal of this section is to prove Theorem 1.1(i), asserting that closed connected hypersurfaces in symplectic manifolds are CH-rigid.  As we will see, this follows fairly quickly from the energy-capacity inequality together with covering tricks.

\begin{lemma}\label{sep} Where $(M,\omega)$ is a connected symplectic manifold, let $N\subset M$ be a (not necessarily connected) closed subset with the property that $M\setminus N=M_0\cup M_1$ where $M_0$ and $M_1$ are disjoint nonempty connected open subsets of $M$ and $\bar{M}_i=M_i\cup N$ for $i=0,1$.  Then $N$ is CH-rigid.
\end{lemma}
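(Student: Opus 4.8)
The plan is to show that $N$ is CH-rigid by exhibiting a lower bound on $\delta(N, N')$ for every $N' \in \mathcal{L}(N)$ with $N \neq N'$, via the energy-capacity inequality of \cite[Theorem 1.1(ii)]{LM}. The key geometric observation is that the separating hypothesis gives us a reservoir of "room" on either side of $N$: pick any $N' = \phi(N)$ with $N' \neq N$. Then there is a point $x \in N$ with $x \notin N'$, hence $x$ lies in one of the complementary components of $N'$, say $M'_0 = \phi(M_0)$ (after relabeling), and similarly one finds a point of $N$ in $\phi(M_1)$; actually what I really want is to locate a small ball. Concretely, since $N' $ is closed and $N \not\subset N'$, I would choose $x \in N \setminus N'$ and a small open ball $B$ around $x$ disjoint from $N'$; because $N$ separates $M$ near $x$, $B$ meets both $M_0$ and $M_1$ (shrinking if necessary so that $x$ is a boundary point of each $M_i$ within $B$, using $\bar M_i = M_i \cup N$).

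The central step is the following displacement argument. Suppose $\psi \in Ham(M,\omega)$ satisfies $\psi(N) = N'$ and $\|\psi\| < \delta(N,N') + \epsilon$. I claim that $\|\psi\|$ controls the Gromov width (or displacement energy) of some fixed ball. The idea: because $N$ separates, the "sides" $M_0$ and $M_1$ are dynamically distinguishable, so a Hamiltonian isotopy moving $N$ off itself must drag a definite amount of one side across $N$; more precisely, if $\psi(N) = N'$ and $N \cap N' $ does not contain all of $N$, then $\psi$ cannot fix the decomposition $M \setminus N = M_0 \sqcup M_1$, which forces $\psi$ (or a related diffeomorphism built from $\psi$ and the covering trick) to displace a fixed ball contained in, say, $M_0 \cap \psi(M_1)$. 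Then the energy-capacity inequality bounds the capacity of that ball by $\|\psi\|$, giving $\delta(N,N') \geq c > 0$ for a constant $c$ independent of $N'$ (depending only on the separation geometry near a chosen basepoint of $N$).

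The main obstacle I anticipate is making the "definite amount of room gets displaced" claim precise and uniform in $N'$. The subtlety is that $N'$ ranges over the whole orbit, so the complementary components $\phi(M_i)$ can be very thin near $N$, and one must be careful that the displaced ball has capacity bounded below by a constant not depending on $\phi$. I expect the resolution is to fix the ball on the $N$ side rather than the $N'$ side: choose once and for all a ball $B_0 \subset M_0$ with $\bar B_0 \cap N = \{pt\}$ or at least $\bar B_0 \cap N \neq \varnothing$, note that any $\psi$ with $\psi(N)=N'\neq N$ must move some point of $N$ to the $\psi$-image of the "wrong" side, and then run the energy-capacity estimate on $B_0$ itself — arguing that $\psi^{-1}$ (or $\psi$) must disjoin $B_0$ from a set it originally met. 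The covering trick reduces the possibly-nonseparating case to this one, but that is Lemma~\ref{sep}'s job in the larger argument; here I only need the separating case, so the heart of the matter is the energy-capacity application, which I would carry out by invoking \cite[Theorem 1.1(ii)]{LM} with the explicit ball $B_0$ and a displacing isotopy constructed from a path of Hamiltonians realizing $\delta(N,N')$ up to $\epsilon$.
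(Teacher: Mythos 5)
Your proposal correctly identifies the energy--capacity inequality from \cite[Theorem 1.1(ii)]{LM} as the key tool and correctly observes that one should look at how $\psi$ interacts with the complementary components of $N$ and $N'$. However, there is a genuine gap that stems from a misconception about what CH-rigidity requires, and this misconception leads you toward a strategy that does not work.

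CH-rigidity means only that $\delta(N,N')>0$ for each fixed $N'\neq N$; the lower bound is \emph{allowed to depend on $N'$}. You repeatedly aim for ``a constant $c$ independent of $N'$'' and then, worried about achieving this uniformity, retreat to the idea of fixing a ball $B_0\subset M_0$ with $\bar B_0\cap N\neq\varnothing$ once and for all. That fixed-ball plan fails: a Hamiltonian diffeomorphism $\psi$ with $\psi(N)=N'$ merely permutes the two sides, sending $M_0$ onto one of $M'_0,M'_1$, and nothing prevents $\psi(B_0)$ from overlapping $B_0$ (both can lie in the same side $M'_j$ of $N'$). So $\psi$ need not displace $B_0$, and no energy--capacity bound comes out. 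The correct move, which you did not take, is to let the displaced set depend on $N'$: writing $M'_i=\psi_0(M_i)$ for any one $\psi_0$ with $\psi_0(N)=N'$, pick $x_0\in N\setminus N'$ (or symmetrically in $N'\setminus N$); since $x_0\in\bar M_0\cap\bar M_1$ and $x_0$ lies in some open $M'_j$, both $M_0\cap M'_j$ and $M_1\cap M'_j$ are nonempty open sets. Then \emph{any} $\psi$ with $\psi(N)=N'$ must satisfy $\psi(M_0)\cap M'_j=\varnothing$ or $\psi(M_1)\cap M'_j=\varnothing$, hence displaces one of those two fixed (for this $N'$) nonempty open sets, so $\|\psi\|\geq\min\bigl(e(M_0\cap M'_j),e(M_1\cap M'_j)\bigr)>0$. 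You also misattribute the covering trick to this lemma; in the paper it appears later, reducing the nonseparating case to this one.
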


\begin{proof}  Let $N'\in\mathcal{L}(N)\setminus \{N\}$; we are to show that there is $\delta>0$ such that any $\psi\in Ham(M,\omega)$ with $\psi(N)=N'$ has $\|\psi\|\geq\delta$.  Of course the assumption that $N'\in\mathcal{L}(N)$ means that there is some $\psi_0\in Ham(M,\omega)$ with $N'=\psi_0(N)$.  If we set $M'_i=\psi_0(M_i)$ for $i=0,1$, it will then hold that $M\setminus N'=M'_0\cup M'_1$ where the $M'_i$ are disjoint nonempty connected open sets with $\bar{M}'_i=M'_i\cup N'$.  

Since $N\neq N'$, one (more likely both) of $N\setminus N'$ and $N'\setminus N$ is nonempty.  Suppose the former set is nonempty, and choose $x_0\in N\setminus N'$.  Since $x_0$ lies in the closures of both $M_0$ and $M_1$, any open set containing $x_0$ will intersect both $M_0$ and $M_1$.  On the other hand since $x_0\notin N'$, for some $j\in\{0,1\}$ we have $x_0\in  M'_j$.  Thus in particular $M'_j$ intersects both $M_0$ and $M_1$.  We claim that $\delta(N,N')$ is at least equal to the minimum of the displacement energies of $M_0\cap M'_j$ and $M_1\cap M'_j$, which of course is positive by \cite[Theorem 1.1(ii)]{LM} since $M_0\cap M'_j$ and $M_1\cap M'_j$ are nonempty open sets.  Indeed, if $\psi\in Ham(M,\omega)$ has $\psi(N)=N'$, then also $\psi(M\setminus N)=M\setminus N'$, so since the connected components of $M\setminus N$ are $M_0$ and $M_1$ while those of $M\setminus N'$ are $M'_0$ and $M'_1$, it holds that either $\psi(M_0)\cap M'_j=\varnothing$ or $\psi(M_1)\cap M'_j=\varnothing$.  In the first case $\psi$ displaces $M_0\cap M'_j$, and in the second case it displaces $M_1\cap M'_j$, proving that in either case $\|\psi\|$ is at least the minimum of the two aforementioned displacement energies.

This proves the result in the case that $N\setminus N'\neq\varnothing$.  The case that $N'\setminus N\neq \varnothing$ is essentially identical: one will have that $M_j$ intersects both $M'_0$ and $M'_1$ for some $j$, and then (using that $\|\psi\|=\|\psi^{-1}\|$) one proves in the same way as in the previous paragraph that $\delta(N,N')$ is at least the minimum of the displacement energies of  $M'_0\cap M_j$ and  $M'_1\cap M_j$.
\end{proof}

The proof that all closed codimension-one submanifolds, and not just separating ones, are CH-rigid proceeds by passing to finite covers in order to appeal to Lemma \ref{sep}.  The following simple lemma is the basis for this:

\begin{lemma}\label{covlem}
Let $\pi\co X\to M$ be a (surjective) finite covering map where $(M,\omega)$ is a symplectic manifold.  Suppose that $N\subset M$ is a closed subset such that $\pi^{-1}(N)$ is CH-rigid as a subset of $(X,\pi^*\omega)$.  Then $N$ is CH-rigid as a subset of $(M,\omega)$.
\end{lemma}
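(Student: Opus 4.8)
The plan is to lift Hamiltonian data from $(M,\omega)$ to $(X,\pi^*\omega)$ along $\pi$ in a way that does not increase Hofer length, thereby reducing the CH-rigidity of $N$ to that of $\pi^{-1}(N)$.

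First I would make the lifting construction precise. Given any compactly supported $H\co [0,1]\times M\to\R$ with $\phi_H^1=\psi$, set $\tilde H=H\circ(\mathrm{id}_{[0,1]}\times\pi)\co [0,1]\times X\to\R$. Since a finite covering map is proper, so is $\mathrm{id}_{[0,1]}\times\pi$, whence $\mathrm{supp}\,\tilde H\subseteq(\mathrm{id}_{[0,1]}\times\pi)^{-1}(\mathrm{supp}\,H)$ is compact; thus $\tilde H$ is an admissible Hamiltonian on $(X,\pi^*\omega)$. Because $\pi\co(X,\pi^*\omega)\to(M,\omega)$ is a local symplectomorphism, for each $t$ the Hamiltonian vector field of $\tilde H_t$ computed with respect to $\pi^*\omega$ is exactly the $\pi$-lift of the Hamiltonian vector field of $H_t$; integrating, the flow $\{\phi_{\tilde H}^t\}$ is the lift of $\{\phi_H^t\}$ starting at the identity, and in particular $\tilde\psi:=\phi_{\tilde H}^1\in Ham(X,\pi^*\omega)$ satisfies $\pi\circ\tilde\psi=\psi\circ\pi$.

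Second I would extract two consequences of the surjectivity of $\pi$. On one hand, $\max_X\tilde H_t=\max_M H_t$ and $\min_X\tilde H_t=\min_M H_t$ for all $t$, so $\tilde H$ and $H$ have equal Hofer length; since every such $\tilde H$ generates $\tilde\psi$, taking the infimum over all $H$ generating $\psi$ gives $\|\tilde\psi\|_{(X,\pi^*\omega)}\le\|\psi\|_{(M,\omega)}$. On the other hand, for any subset $A\subseteq M$ and any lift $\tilde g$ of $g\in Ham(M,\omega)$ one has $\tilde g(\pi^{-1}(A))=\pi^{-1}(g(A))$: the inclusion $\subseteq$ is immediate from $\pi\circ\tilde g=g\circ\pi$, and $\supseteq$ follows by applying the same inclusion to $\tilde g^{-1}$, which is a lift of $g^{-1}$. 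In particular, if $\psi(N)=N'$ then $\tilde\psi(\pi^{-1}(N))=\pi^{-1}(N')$, so $\pi^{-1}(N')\in\mathcal{L}(\pi^{-1}(N))$ for the action of $Ham(X,\pi^*\omega)$; moreover $\pi^{-1}(N')=\pi^{-1}(N)$ if and only if $N'=N$, again because $\pi$ is surjective.

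Finally I would conclude. Let $N'\in\mathcal{L}(N)$ with $N'\neq N$. By the previous paragraph $\pi^{-1}(N)$ and $\pi^{-1}(N')$ are distinct elements of one $Ham(X,\pi^*\omega)$-orbit, so the assumed CH-rigidity of $\pi^{-1}(N)$ yields $c:=\delta_{(X,\pi^*\omega)}(\pi^{-1}(N),\pi^{-1}(N'))>0$. Now whenever $\psi\in Ham(M,\omega)$ satisfies $\psi(N)=N'$, the lift $\tilde\psi$ sends $\pi^{-1}(N)$ to $\pi^{-1}(N')$ and $\|\psi\|\ge\|\tilde\psi\|\ge c$; taking the infimum over all such $\psi$ gives $\delta_{(M,\omega)}(N,N')\ge c>0$, which is precisely CH-rigidity of $N$. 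The one delicate point is the lifting step in the first paragraph---guaranteeing that the lifted flow is globally defined, single-valued, compactly supported, and genuinely Hamiltonian for $\pi^*\omega$; this is handled by lifting the isotopy rather than the time-one map directly (equivalently, by integrating the lifted, compactly supported vector field) and by using that a finite cover is proper.
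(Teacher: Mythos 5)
Your overall strategy — lift the Hamiltonian via $\tilde H = H\circ(\mathrm{id}\times\pi)$, observe that the lifted flow covers the original one and has the same Hofer length, and then push the CH-rigidity of $\pi^{-1}(N)$ down to $N$ — is exactly the paper's approach. However, the sentence ``since every such $\tilde H$ generates $\tilde\psi$'' quietly asserts that the lift $\tilde\psi$ is independent of the choice of Hamiltonian isotopy generating $\psi$, and this is more delicate than it looks: two isotopies from $\mathrm{id}$ to $\psi$ lift to paths whose endpoints differ by a deck transformation, and you would need to know that no nontrivial deck transformation of $X\to M$ lies in $Ham(X,\pi^*\omega)$. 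That is immediate when $X$ is noncompact (a compactly supported diffeomorphism has fixed points, a nontrivial deck transformation has none), but when $X$ is compact it amounts to the statement that every Hamiltonian diffeomorphism of a compact symplectic manifold has a fixed point — essentially a (weak) Arnold conjecture, which is far heavier machinery than this lemma should require. The paper sidesteps the issue entirely by defining the lift as a map $\widetilde{Ham}(M,\omega)\to\widetilde{Ham}(X,\pi^*\omega)$ between \emph{universal covers} (where the lift genuinely is well defined on a homotopy class of isotopies), using the Hofer pseudonorm there, and noting that the Chekanov--Hofer pseudometric can be computed as an infimum over $\widetilde{Ham}$.

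The good news is that your argument does not actually need the unjustified well-definedness claim. For each $H$ with $\phi_H^1=\psi$ and $\psi(N)=N'$, let $\tilde\psi_H:=\phi_{\tilde H}^1$ (allowing it to depend on $H$). Then $\pi\circ\tilde\psi_H=\psi\circ\pi$ still gives $\tilde\psi_H(\pi^{-1}(N))=\pi^{-1}(N')$, and $\|\tilde\psi_H\|\le\|\tilde H\|=\|H\|$; hence $\delta_{(X,\pi^*\omega)}\bigl(\pi^{-1}(N),\pi^{-1}(N')\bigr)\le\|H\|$. Taking the infimum first over $H$ generating $\psi$, then over $\psi$ with $\psi(N)=N'$, yields $\delta_{(X,\pi^*\omega)}\bigl(\pi^{-1}(N),\pi^{-1}(N')\bigr)\le\delta_{(M,\omega)}(N,N')$, and the rest of your conclusion goes through unchanged. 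So you should drop the claim that every lifted Hamiltonian generates the same $\tilde\psi$, or adopt the paper's universal-cover formulation, rather than try to prove it.
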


\begin{proof} If $H\co [0,1]\times M\to \R$ is any compactly supported smooth function, then the function $\tilde{H}(t,x)=H(t,\pi(x))$ on $[0,1]\times X$ will still be compactly supported since $\pi$ is a finite covering map, and the Hamiltonian flow generated by $\tilde{H}$ will lift the flow generated by $H$.  This gives rise to a map \begin{align*} \widetilde{Ham}(M,\omega)&\to \widetilde{Ham}(X,\omega_X)\\ \phi&\mapsto \tilde{\phi}\end{align*} between the universal covers of the respective Hamiltonian diffeomorphism groups. Continue to denote by $\|\cdot\|$ the Hofer (pseudo-)norm on $\widetilde{Ham}$ obtained by taking infima of lengths of paths in a given homotopy class, and note that the Chekanov--Hofer pseudometric is  given by the formula \[ \delta(N,N')=\inf\{\|\phi\| | \phi\in \widetilde{Ham}(M,\omega),\,\phi(N)=N'\}\] where we take the infimum over pseudonorms of elements of $\widetilde{Ham}$ rather than over norms of elements of $Ham$ and where for $\phi\in\widetilde{Ham}(M,\omega)$ we denote by $\phi(N)$ the image of $N$ under the terminal point of a path in $Ham(M,\omega)$ representing the homotopy class $\phi$.  Moreover
 we have $\|\tilde{\phi}\|\leq \|\phi\|$ for all $\phi\in \widetilde{Ham}(M,\omega)$, by virtue of the fact that $\max\tilde{H}(t,\cdot)-\min\tilde{H}(t,\cdot)=\max H(t,\cdot)-\min H(t,\cdot)$.  From this it follows that, for any $N'\in \mathcal{L}(N)$, we have \[ \delta(N,N')\geq \delta\left(\pi^{-1}(N),\pi^{-1}(N')\right).\]  If $N'\neq N$, then since $\pi$ is surjective $\pi^{-1}(N')\neq \pi^{-1}(N)$, so by the hypothesis of the lemma $\delta(\pi^{-1}(N),\pi^{-1}(N'))> 0$, whence $\delta(N,N')>0$, proving that $N$ is CH-rigid.
\end{proof}

\begin{lemma}\label{orrigid} Let $(M,\omega)$ be a connected symplectic manifold and $N\subset M$ a connected orientable codimension-one submanifold which is closed as a subset.  Then $N$ is CH-rigid.
\end{lemma}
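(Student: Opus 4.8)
The plan is to reduce the statement to the already-proven Lemma \ref{sep} by means of the covering trick of Lemma \ref{covlem}. The first point is a bit of differential topology: since $M$ is symplectic it is orientable (as $\omega^{\dim M/2}$ is a volume form), so the normal bundle of the orientable codimension-one submanifold $N$ is an orientable real line bundle, hence trivial; in other words $N$ is co-orientable. I would therefore fix a co-orientation together with a tubular neighborhood $\iota\co N\times(-1,1)\hookrightarrow M$ with $\iota(\cdot,0)=\mathrm{id}_N$. The two ``sides'' $U_{\pm}:=\iota\big(N\times(\pm(0,1))\big)$ are then connected (because $N$ is) open subsets of $M\setminus N$ whose closures in $M$ both contain $N$. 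The argument then splits according to whether $N$ separates $M$.

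Suppose first that $M\setminus N$ is disconnected. I claim $N$ and $(M,\omega)$ then already satisfy the hypotheses of Lemma \ref{sep}. Each connected component $C$ of $M\setminus N$ is open, is not all of $M\setminus N$, satisfies $\overline C\cap(M\setminus N)=C$ (components of the open set $M\setminus N$ are closed in it), and has $\overline C\setminus C\neq\varnothing$ since $M$ is connected; thus $\overline C\setminus C$ is a nonempty subset of $N$, and examining the chart $\iota$ near a point of it shows $C\supseteq U_+$ or $C\supseteq U_-$. Hence $M\setminus N$ has exactly two components $M_0\supseteq U_-$ and $M_1\supseteq U_+$, and since $U_\pm\subseteq M\setminus N$ while $\overline{U_\pm}\supseteq N$, and $\overline{M_i}$ is disjoint from the open set $M_{1-i}$, one gets $\overline{M_i}=M_i\cup N$. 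So Lemma \ref{sep} applies and $N$ is CH-rigid.

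Now suppose $V:=M\setminus N$ is connected. The goal is to build a connected double cover of $M$ in which the preimage of $N$ becomes separating. Using the product tubular neighborhood, let $\hat M$ be the manifold-with-boundary obtained by cutting $M$ along $N$: there is a smooth surjection $q\co\hat M\to M$ restricting to a diffeomorphism of the interior $\hat M^{\circ}$ onto $V$ and to diffeomorphisms of the two boundary components $\partial_\pm\hat M$ onto $N$, and $\hat M$ is connected because $V$ is. Taking a second copy $\hat M'$, form the closed manifold $X=\hat M\cup\hat M'$ by gluing $\partial_+\hat M$ to $\partial_-\hat M'$ and $\partial_-\hat M$ to $\partial_+\hat M'$ along the identity maps of $N$. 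Then $q$ and $q'$ assemble to a surjection $\pi\co X\to M$ which is $2$-to-$1$ and a local diffeomorphism, and is a finite covering (each point of $M$ has an evenly covered neighborhood: $M\setminus N$ for points off $N$, and a tubular-neighborhood chart for points of $N$); moreover $X$ is connected (the union of the two connected pieces along the nonempty seams) and $\pi^{*}\omega$ is symplectic. By construction $\pi^{-1}(N)$ is the union of the two seams, each mapped diffeomorphically onto $N$ by $\pi$, whereas $X\setminus\pi^{-1}(N)=\hat M^{\circ}\sqcup\hat M'^{\circ}$ has exactly two components, each mapped diffeomorphically onto the connected set $V$; and the collar neighborhoods of $\partial\hat M$, $\partial\hat M'$ give $\overline{\hat M^{\circ}}=\hat M^{\circ}\cup\pi^{-1}(N)$ and likewise for $\hat M'^{\circ}$. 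Hence Lemma \ref{sep}, applied in $(X,\pi^{*}\omega)$ to the closed subset $\pi^{-1}(N)$, shows $\pi^{-1}(N)$ is CH-rigid in $X$, and then Lemma \ref{covlem} shows $N$ is CH-rigid in $M$.

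All of the substantive content is in Lemma \ref{sep}; the part requiring a little care is the cut-and-reglue construction of $X$ in the nonseparating case and the verification that $\pi$ is a covering map with $\pi^{-1}(N)$ separating $X$ into precisely two pieces satisfying the closure condition, but this is routine once the product tubular neighborhood of $N$ is available. (Conceptually, $X\to M$ is the double cover classified by the mod-$2$ intersection-number homomorphism $\pi_1(M)\to\mathbb{Z}/2$, which is onto precisely because $N$ is nonseparating, so one could also phrase the construction that way; the cut-and-reglue description, however, makes the separating structure of $\pi^{-1}(N)$ transparent.)
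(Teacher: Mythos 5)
Your argument is correct and follows essentially the same route as the paper: reduce the separating case to Lemma \ref{sep} and, in the nonseparating case, pass to the connected double cover of $M$ obtained by doubling $M$ cut along $N$, where $\pi^{-1}(N)$ becomes separating, and then apply Lemma \ref{sep} together with Lemma \ref{covlem}. The only cosmetic difference is that you build $X$ by first forming the manifold-with-boundary $\hat M$ and gluing two copies, whereas the paper, following its no-boundary convention, builds the same cover directly by ``elongating'' the ends of $M\setminus N$ with overlapping copies of the tubular neighborhood.
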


\begin{proof} Since $N$ and $M$ are orientable, the normal bundle to $N$ in $M$ is orientable and hence trivial since it has rank one.  Thus by the tubular neighborhood theorem there is a neighborhood $U$ of $N$ in $M$ and a diffeomorphism $\Phi\co U\to \R\times N$ which restricts to $N$ as the map $n\mapsto (0,n)$.  Let \[ U_+=\Phi^{-1}\left((0,\infty)\times N\right)\quad\mbox{and}\quad U_-=\Phi^{-1}\left((-\infty,0)\times N\right) \] 
Since $M$ and $N$ are assumed connected it is easy to see that $M\setminus N$ has either one or two path components; the case where $M\setminus N$ has two path components is covered by Lemma \ref{sep}, so let us assume that $M\setminus N$ is connected.    Let $U^1$ and $U^2$ be two identical copies of $U$, containing open subsets $U^{1}_{\pm}$, $U^{2}_{\pm}$ as above.  Let $X_0$ denote the  manifold obtained from $(M\setminus N)\coprod U^1\coprod U^2$ by identifying points of $U_-\subset M\setminus N$ with those of $U^{1}_{-}$, and points of $U_+\subset M\setminus N$ with those of $U^{2}_{+}$.  (So $X_0$ is diffeomorphic to $M\setminus N$, but with the ends $U_{\pm}$ ``elongated'' to disjoint copies $U^1$ and $U^2$ of $U$).  Now let $X_{1}$ and $X_{2}$ be two identical copies of $X_0$, so that we have copies of $U$ as above embedded as $U^{1}_{1}$ and $U_{1}^{2}$ in $X_1$, and as $U_{2}^{1}$ and $U^{2}_{2}$ in $X_2$, and let $X$ be the  manifold obtained from $X_1\coprod X_2$ identifying $U^{1}_{1}$ with $U^{2}_{2}$, and $U^{2}_{1}$ with $U_{2}^{1}$.  (See Figure \ref{figure}.)

\begin{figure}\label{figure}
\centering 
\includegraphics[width=4in]{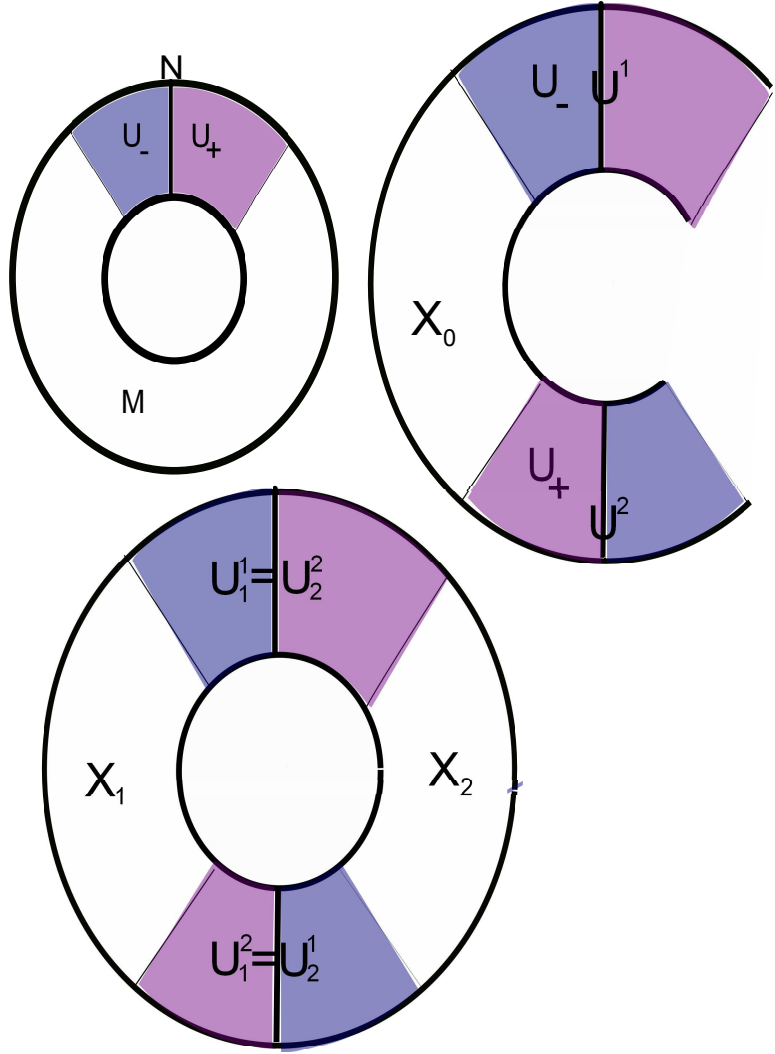}
\caption{The manifolds $M$, $X_0$, and $X=X_1\cup X_2$ in the proof of Lemma \ref{orrigid}.  We have an obvious double cover $\pi\co X\to M$, and $\pi^{-1}(N)\subset X$ (which appears in the figure as a union of two solid vertical line segments) separates $X$.}
\end{figure}

Every point of $X$ is a member of either (or both) a copy of $M\setminus N$ or a copy of $U$, and so we get a map $\pi\co X\to M$ obtained from the inclusions of $M\setminus N$ and $U$ into $M$.  It is easy to see that $\pi$ is a two-to-one covering map, such that $X\setminus \pi^{-1}(N)$ is a disjoint union of two copies of $M\setminus N$, each having boundary given by $\pi^{-1}(N)$.  Let $\omega_X=\pi^*\omega$. By Lemma \ref{sep}, $\pi^{-1}(N)\subset X$ is then CH-rigid, so by Lemma \ref{covlem} $N\subset M$ is also CH-rigid.
\end{proof}
 
So for the following theorem, which restates Theorem \ref{int1}(i), it remains only to address the nonorientable case, which can likewise be handled by a covering argument:

\begin{theorem}\label{hypthm} For any symplectic manifold $(M,\omega)$, any connected submanifold $N\subset M$ of codimension one which is closed as a subset is CH-rigid.
\end{theorem}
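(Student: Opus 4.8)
The plan is to reduce the nonorientable case to the orientable case handled in Lemma \ref{orrigid} by passing to the orientation double cover, exactly in the spirit of Lemma \ref{covlem}. Suppose $N \subset M$ is connected, codimension one, and closed as a subset, but $N$ is nonorientable (if $N$ is orientable we are already done by Lemma \ref{orrigid}). Since $N$ has codimension one, its normal bundle in $M$ has rank one, so $w_1$ of the normal bundle equals $w_1(M)|_N - w_1(N)$; in particular the normal bundle is orientable if and only if $w_1(M)|_N = w_1(N)$. The cleanest uniform approach is to take $\pi \co \tilde{M} \to M$ to be a finite cover of $M$ for which the preimage $\tilde{N} = \pi^{-1}(N)$ has an orientable component, and then to further reduce to a connected cover.

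First I would construct the relevant cover. Consider the homomorphism $\rho \co \pi_1(N) \to \Z/2$ classifying $w_1(N)$ (the orientation character of $N$); since $N$ is nonorientable this is surjective, with kernel an index-two subgroup corresponding to the orientation double cover $\hat{N} \to N$. I want to realize $\hat{N}$ as a component of $\pi^{-1}(N)$ for a double cover of $M$. This requires $\rho$ to extend over $\pi_1(M)$, which need not happen in general, so instead I would work with a tubular neighborhood: let $U \supset N$ be a tubular neighborhood, so $U$ deformation retracts to $N$ and $\pi_1(U) \cong \pi_1(N)$; the character $\rho$ then gives a double cover $\hat{U} \to U$ restricting to $\hat{N} \to N$ over $N$. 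Now glue: $\hat{U}$ has two "ends'' (boundary collars) just as in the proof of Lemma \ref{orrigid}, and one can splice copies of $\hat{U}$ together with copies of $M \setminus N$ (as needed, depending on whether $M \setminus N$ is connected) to build a finite cover $\pi \co X \to M$ whose restriction over $U$ is $\hat{U} \to U$, so that $\pi^{-1}(N) \supset \hat{N}$, where $\hat{N}$ is now \emph{orientable}. This is the technical heart of the argument, and it is the step I expect to require the most care: one must track which pieces to glue so that $\pi$ is a genuine (finite, say double) covering map and so that $\pi^{-1}(N)$ is a closed submanifold of $X$ each of whose components is orientable.

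Once such a cover $\pi \co X \to M$ is in hand, the endgame is short. Equip $X$ with $\omega_X = \pi^*\omega$. The subset $\pi^{-1}(N) \subset X$ is a closed codimension-one submanifold, possibly disconnected, but with every component orientable and connected. By Lemma \ref{orrigid} each component is CH-rigid; a disjoint union of CH-rigid closed submanifolds is easily seen to be CH-rigid (a Hamiltonian diffeomorphism moving the union to a distinct position must move some component, and the displacement-energy lower bound for that component applies — more directly, one invokes Lemma \ref{sep} after noting the total space is cut into pieces, exactly as in the final line of Lemma \ref{orrigid}'s proof). Hence $\pi^{-1}(N)$ is CH-rigid in $(X, \omega_X)$, and Lemma \ref{covlem} then gives that $N$ is CH-rigid in $(M, \omega)$, completing the proof.

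An alternative, possibly slicker, route worth mentioning: rather than the bare orientation cover of $N$, pass first to the double cover $M' \to M$ associated to $w_1(M)|_N$ if the normal bundle of $N$ is non-orientable, reducing to the case where the normal bundle \emph{is} orientable (hence trivial, giving the two-sided collar used in Lemma \ref{orrigid}), and then, if $N$ itself is still nonorientable, iterate the gluing construction of Lemma \ref{orrigid} using the orientation cover of $N$ in place of two disjoint copies of a collar. Either way the principle is the same — covers convert the nonorientable or non-two-sided situation into the separating situation of Lemma \ref{sep}, where the energy-capacity inequality \cite[Theorem 1.1(ii)]{LM} applies — and the only real obstacle is the bookkeeping in assembling the cover; the symplectic input has already been fully extracted in Lemmas \ref{sep}, \ref{covlem}, and \ref{orrigid}.
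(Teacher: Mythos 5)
Your overall strategy — reduce to the orientable case of Lemma \ref{orrigid} by passing to a finite cover over which the preimage of $N$ becomes orientable, then invoke Lemma \ref{covlem} — is exactly the paper's. Where you diverge is in the construction of the cover. You assert that the orientation character $\rho = w_1(N)\co\pi_1(N)\to\Z/2$ ``need not extend over $\pi_1(M)$'' and therefore fall back on a cut-and-glue construction over a tubular neighborhood. In fact $\rho$ always does extend, and the paper uses this to produce the cover immediately: since $N$ is closed as a subset the inclusion $N\hookrightarrow M$ is proper, so $N$ has a mod-$2$ Poincar\'e dual $PD(N)\in H^1(M;\Z/2)$, and the standard identity $PD(N)|_N = w_1(\nu)$ (where $\nu$ is the normal bundle) combined with $w_1(\nu)=w_1(TN)$ — which holds because $w_1(TM)=0$, $M$ being symplectic hence orientable, so your ``$w_1(M)|_N$'' is automatically zero — exhibits $PD(N)$ as the desired global extension of $\rho$. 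The double cover $\pi\co X\to M$ associated to $\ker PD(N)$ then pulls $\nu$ back to an orientable line bundle, so $\tilde N=\pi^{-1}(N)$ is an orientable, closed, connected codimension-one submanifold (the orientation double cover of $N$), and Lemmas \ref{orrigid} and \ref{covlem} finish. Your gluing recipe does appear to produce this same double cover when $N$ is nonorientable (note that $M\setminus N$ is then automatically connected, so only one splice is needed, and $\pi^{-1}(N)=\hat N$ is connected, so the aside about disjoint unions of CH-rigid sets is not needed), so the argument is recoverable; but the bookkeeping you yourself flag as ``requiring the most care'' is precisely what the Poincar\'e duality observation — which is also where the hypothesis that $N$ is closed as a subset enters — renders unnecessary.
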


\begin{proof} Let $\nu\to N$ denote the normal bundle to $N$ in $M$.  Since $N$ is assumed closed as a subset, the inclusion of $N$ into $M$ is a proper map, so $N$ has a mod 2 Poincar\'e dual $PD(N)\in H^1(M;\mathbb{Z}/2)$, and $PD(N)|_N$ coincides with the mod 2 Euler class (\emph{i.e.}, the first Stiefel--Whitney class) $w_1(\nu)\in H^1(N;\mathbb{Z}/2)$.  Let $\pi\co X\to M$ be the cover associated to the kernel of the evaluation map $PD(N)\co \pi_1(M)\to\mathbb{Z}/2$, so $\pi$ is a two-to-one cover if $\nu$ is nonorientable and the identity otherwise, and in any case we have $\pi^*PD(N)=0$.  Then where $\tilde{N}=\pi^{-1}(N)$, the normal bundle $\tilde{\nu}$ of $\tilde{N}$ in $X$ is given by $\tilde{\nu}=\pi^*\nu$, and so we have \[ w_1(\tilde{\nu})=\pi^*\left(PD(N)|_N\right)=\left.\left(\pi^*PD(N)\right)\right|_{\tilde{N}}=0.\]  So the normal bundle to $\tilde{N}$ in $X$ is orientable, and so since $X$ is also orientable it follows that $\tilde{N}$ is orientable.  Of course if $N$ is orientable then $\tilde{N}=N$ and $X=M$, but if $N$ is not orientable then $\pi|_{\tilde{N}}\co \tilde{N}\to N$ is the orientable double cover of $N$ and in particular is connected.  So Lemma \ref{orrigid} applies to show that $\tilde{N}$ is CH-rigid, and so by Lemma \ref{covlem} $N$ is also CH-rigid.
\end{proof}

\section{The rigid locus}\label{rns}

This section proves basic properties concerning our most important tool in this paper, the rigid locus of a closed subset of a symplectic manifold.  Using Lemmas \ref{RNlemma} and \ref{rnp}, we will quickly prove Proposition \ref{onlyif} and Theorem \ref{addint}, and lay part of the foundation for the proof of Theorem \ref{int1}(ii), which will be proven later in Section \ref{genwt}. Also, Sections \ref{lagsec} and \ref{unstable} will prove properties of the rigid locus that will be important in the proof of Theorem \ref{coisomain} in Section \ref{coisosect}.

We consider general closed subsets $N$ of the symplectic manifold $(M,\omega)$.  As before, $\mathcal{L}(N)$ denotes the orbit of $N$ under $Ham(M,\omega)$, $\delta$ denotes the pseudometric on $\mathcal{L}(N)$ induced by the Hofer norm, $\Sigma_N$ denotes the stabilizer $\{\phi\in Ham(M,\omega)|\phi(N)=N\}$, and $\bar{\Sigma}_N$ is the closure of $\Sigma_N$ with respect to the Hofer norm.

\begin{dfn} If $N$ is a closed subset of $M$, the \emph{rigid locus} of $N$ is the subset \[ R_N=\{x\in N|(\forall \phi\in \bar{\Sigma}_N)(\phi(x)\in N)\}.\]
\end{dfn}

\begin{lemma} \label{RNlemma} If $N$ is a proper closed subset of the symplectic manifold $(M,\omega)$, the rigid locus $R_N\subset N$ obeys the following properties.
\begin{itemize} \item[(i)] $R_N$ is a closed subset of $N$.
\item[(ii)] $R_N=N$ if and only if $N$ is CH-rigid.
\item[(iii)] If $R_N=\varnothing$ then $N$ is weightless.  Conversely, assuming that no connected component of $M$ is contained in $N$, if $N$ is weightless then $R_N=\varnothing$.
\item[(iv)] For all $\psi\in \bar{\Sigma}_N$ we have $\psi(R_N)=R_N$.
\item[(v)] Suppose that $N'\in \mathcal{L}(N)$ has the property that $\delta(N,N')=0$.  Then $R_N\subset N\cap N'$.
\end{itemize}
\end{lemma}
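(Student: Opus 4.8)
I would organize the proof by separating the four formal assertions (i), (ii), (iv), (v) --- all immediate from the definition of $R_N$ and Proposition \ref{stabclosure} --- from (iii), which carries the real content. For (i), $R_N=\bigcap_{\phi\in\bar\Sigma_N}\phi^{-1}(N)$ is an intersection of closed sets (each $\phi\in\bar\Sigma_N$ being a homeomorphism and $N$ closed) and, taking $\phi=\mathrm{id}\in\Sigma_N$, a subset of $N$, so it is closed in $N$. For (ii), I would invoke Proposition \ref{stabclosure} with $S=\mathcal L(N)$, basepoint $N$, and $H=\Sigma_N$, which gives that $N$ is CH-rigid iff $\Sigma_N=\bar\Sigma_N$: if $\Sigma_N=\bar\Sigma_N$ then every element of $\bar\Sigma_N$ preserves $N$, so $R_N=N$; and conversely if $R_N=N$ then each $\phi\in\bar\Sigma_N$ has $\phi(N)\subset N$, and applying this also to $\phi^{-1}\in\bar\Sigma_N$ (using that the closure of a subgroup in a topological group is a subgroup) gives $\phi(N)=N$, so $\bar\Sigma_N=\Sigma_N$. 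For (iv), observe that $y\in\psi(R_N)$ iff $(\phi\psi^{-1})(y)\in N$ for every $\phi\in\bar\Sigma_N$; since $\phi\mapsto\phi\psi^{-1}$ permutes the group $\bar\Sigma_N$ this is precisely the condition $y\in R_N$. Then (v) follows at once: if $N'=\psi_0(N)\in\mathcal L(N)$ with $\delta(N,N')=0$, the characterization (\ref{Hbar}) in Proposition \ref{stabclosure} gives $\psi_0\in\bar\Sigma_N$, so by (iv) $R_N=\psi_0(R_N)\subset\psi_0(N)=N'$, and combined with $R_N\subset N$ this yields $R_N\subset N\cap N'$.

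For (iii) I would treat the two implications separately. The easy one: if $N$ is weightless then $\bar\Sigma_N=Ham(M,\omega)$ by Proposition \ref{stabclosure}; assuming no connected component of $M$ lies in $N$, then given $x\in N$ I would choose a point $y$ of the component $M_0$ containing $x$ with $y\notin N$ (possible since $M_0\not\subset N$) and use transitivity of the Hamiltonian group of a connected symplectic manifold to find $\phi\in Ham(M,\omega)=\bar\Sigma_N$ with $\phi(x)=y\notin N$, whence $x\notin R_N$ and $R_N=\varnothing$.

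The substantive direction --- that $R_N=\varnothing$ forces $N$ to be weightless --- is where I expect the real work, and my plan is to deduce it from Banyaga's fragmentation lemma; it suffices to show $\bar\Sigma_N=Ham(M,\omega)$. First, using $R_N=\varnothing$, for each $x\in N$ fix $\phi_x\in\bar\Sigma_N$ with $\phi_x(x)\notin N$ and then, by continuity of $\phi_x$ and closedness of $N$, an open neighborhood $U_x$ of $x$ with $\phi_x(U_x)\cap N=\varnothing$; the sets $U_x$ $(x\in N)$ together with $M\setminus N$ cover $M$. Next I would show that every Hamiltonian diffeomorphism supported in a member of this cover lies in $\bar\Sigma_N$: one supported in $M\setminus N$ fixes $N$ pointwise and so lies in $\Sigma_N$, while if $\theta$ is supported in $U_x$ then the conjugate $\phi_x\theta\phi_x^{-1}$ is a Hamiltonian diffeomorphism supported in $\phi_x(U_x)\subset M\setminus N$, hence lies in $\Sigma_N$, and therefore $\theta=\phi_x^{-1}(\phi_x\theta\phi_x^{-1})\phi_x\in\bar\Sigma_N$ because $\bar\Sigma_N$ is a group containing $\phi_x$. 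Finally, Banyaga's fragmentation lemma expresses an arbitrary element of $Ham(M,\omega)$ as a finite product of Hamiltonian diffeomorphisms each supported in some $U_x$ or in $M\setminus N$, all of which lie in the subgroup $\bar\Sigma_N$; hence $\bar\Sigma_N=Ham(M,\omega)$. The only non-formal inputs are the fragmentation lemma and the transitivity used for the easy direction; everything else is bookkeeping inside $\bar\Sigma_N$, the one delicate point being that the $\phi_x$ are merely Hofer-limits of stabilizers rather than stabilizers themselves, which is exactly what the conjugation step above is designed to absorb.
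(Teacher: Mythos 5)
Your proof is correct and follows essentially the same route as the paper: the formal parts (i), (ii), (iv), (v) via the definition and Proposition \ref{stabclosure}, and the substantive half of (iii) via the conjugation trick and Banyaga's fragmentation lemma. The only differences are cosmetic (e.g., your argument for (iv) via the permutation $\phi\mapsto\phi\psi^{-1}$ is a slightly slicker packaging of the same idea).
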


\begin{proof} For (i), simply note that $R_N=\cap_{\phi\in \bar{\Sigma}_N}\phi^{-1}(N)$ and $N$ is assumed to be a closed subset of $M$. So $R_N$ is closed as a subset of $M$, hence also as a subset of $N$.

For (ii), if $\Sigma_N=\bar{\Sigma}_N$ then clearly $R_N=N$.  Conversely if there exists some $\phi\in \bar{\Sigma}_N\setminus \Sigma_N$, then either $\phi(N)\setminus N$ or $N\setminus \phi(N)$ is nonempty.  In the first case we find $x\in N$ with $\phi(x)\notin N$, so $x\notin R_N$, while in the second case we find $x\in N$ with $\phi^{-1}(x)\notin N$, and so since $\phi^{-1}\in \bar{\Sigma}_N$ again $x\notin R_N$.  So in any event if $\bar{\Sigma}_N\neq \Sigma_N$ then $N\neq R_N$. By Proposition \ref{stabclosure} this proves that the nondegeneracy of the pseudometric is equivalent to the condition that $R_N=N$.

For the second half of (iii), suppose that $N$ is weightless and that no connected component of $M$ is contained in $N$.   We then have $\bar{\Sigma}_N=Ham(M,\omega)$, and since $Ham(M,\omega)$ acts transitively on each of its connected components this implies that $R_N=\varnothing$, as any point in $N$ can be moved by an element of $\bar{\Sigma}_N$ to a point in the same connected component of $M$ which is not in $N$.  Now let us prove the first half of (iii) (which is perhaps the only nontrivial part of this lemma).   Suppose that $R_N=\varnothing$, so that for each $x\in N$ we can find $\phi_x\in \bar{\Sigma}_N$ so that $\phi_x(x)\notin N$.  We can then find an open-in-$M$ neighborhood of $x$, say $U_x$, so that $\phi_x(U_x)\cap N=\varnothing$.  We claim that this implies that $Ham^c(U_x)\leq \bar{\Sigma}_N$ (where $Ham^c(U_x)$ is the group of diffeomorphisms generated by Hamiltonians compactly supported in $[0,1]\times U_x$).  Indeed, if $\psi\in Ham^c(U_x)$ and $y\in N$, then $\phi_{x}^{-1}(y)\notin U_x$, and so $(\phi_{x}\circ\psi\circ\phi_{x}^{-1})(y)=y$.  Thus whenever $\psi\in Ham^c(U_x)$ we have $\phi_{x}\circ\psi\circ\phi_{x}^{-1}\in \Sigma_N$.  So since $\bar{\Sigma}_N$ is a subgroup of $Ham(M,\omega)$ which contains both $\phi_x$ and $\Sigma_N$ it follows that $Ham^c(U_x)\leq \bar{\Sigma}_N$.  
 Thus, if $R_N=\varnothing$, we have an open cover \[ M=(M\setminus N)\cup \bigcup_{x\in N}U_x,\] where $Ham^c(U_x)\leq \bar{\Sigma}_N$ by what we have just shown, and where $Ham^c(M\setminus N)\leq \bar{\Sigma}_N$ since all elements of $Ham^c(M\setminus N)$ act trivially on $N$.  But Banyaga's fragmentation lemma \cite[III.3.2]{Ban} asserts that all of $Ham(M,\omega)$ is generated by Hamiltonian diffeomorphisms supported within the members of any given open cover.   So since $\bar{\Sigma}_N$ is a subgroup of $Ham(M,\omega)$ it must in fact be equal to all of $Ham(M,\omega)$, which by Proposition \ref{stabclosure} implies that $\delta$ vanishes identically, \emph{i.e.} that $N$ is weightless.
 
(iv) is essentially immediate from the definition and the fact that $\bar{\Sigma}_N$ is a group: if $x\in R_N$ and $\psi\in \bar{\Sigma}_N$ then for all $\phi\in\bar{\Sigma}_N$ we will have $\phi\circ\psi\in\bar{\Sigma}_N$ and so $\phi(\psi(x))\in N$, proving that $\psi(R_N)\subset R_N$.  The reverse inclusion follows by the same argument applied to $\psi^{-1}$ rather than $\psi$.

For (v), by Proposition \ref{stabclosure} if $\delta(N,N')=0$ we can write $N'=\phi(N)$ where $\phi\in \bar{\Sigma}_N$.  If $x\in R_N$, then since $x\in N$ obviously we have $\phi(x)\in N'$, while also $\phi(x)\in N$ by the definition of $R_N$.  So $\phi(R_N)\subset N\cap N'$.  But by (iv)  we have $\phi(R_N)=R_N$.
\end{proof}

\begin{lemma}\label{rnp}  Assume that the closed subset $N\subset M$ is a submanifold, let $\mathcal{O}\subset N$ be an open subset, and suppose that for some relatively closed subset $P\subset \mathcal{O}$ which is also a submanifold we have $\mathcal{O}\cap R_N\subset P$.  Then $\mathcal{O}\cap R_N\subset \{x\in P|T_{x}N^{\omega}\subset T_x P\}$.
\end{lemma}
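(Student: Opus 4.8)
The plan is to prove that for every $x\in\mathcal{O}\cap R_N$ and every $v\in T_xN^{\omega}$ one has $v\in T_xP$; this is exactly the assertion of the lemma. Fix such an $x$ and $v$. The key step is to construct a smooth one-parameter family $\{\phi_t\}_{t\in[0,t_0)}\subset Ham(M,\omega)$ with $\phi_0=\mathrm{id}$, with $\frac{d}{dt}\big|_{t=0}\phi_t(x)=v$, and with $\phi_t\in\bar{\Sigma}_N$ for every $t$. Granting this, the conclusion is immediate: Lemma \ref{RNlemma}(iv) gives $\phi_t(R_N)=R_N$, so $\phi_t(x)\in R_N\subset N$; since $\mathcal{O}$ is open in $N$ and $\phi_0(x)=x\in\mathcal{O}$, after shrinking $t_0$ we have $\phi_t(x)\in\mathcal{O}\cap R_N\subset P$ for all $t\in[0,t_0)$; hence the smooth curve $t\mapsto\phi_t(x)$ lies in the submanifold $P$, so $v=\frac{d}{dt}\big|_{t=0}\phi_t(x)\in T_xP$.

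To construct the family I would first pick, using a chart near $x$ in which $N$ is near $x$ cut out by coordinate functions $u$, a compactly supported Hamiltonian $H\co M\to\R$ that is supported in a small ball about $x$, vanishes identically on $N$, and has $X_H(x)=v$. Such an $H$ exists precisely because $v\in T_xN^{\omega}$: vanishing on $N$ near $x$ forces $dH_x$ to annihilate $T_xN$, and $\iota_v\omega$ annihilates $T_xN$ exactly when $v\in T_xN^{\omega}$, so one may take $H$ to be a suitable linear function of $u$ times a bump function. Let $\phi_t$ be the flow of $H$; then $\phi_t\in Ham(M,\omega)$ and $\frac{d}{dt}\big|_{t=0}\phi_t(x)=X_H(x)=v$ automatically, and the only real point is to verify that $\phi_t\in\bar{\Sigma}_N$, i.e.\ by Proposition \ref{stabclosure} that $\delta(N,\phi_t(N))=0$, even though $\phi_t$ genuinely moves part of $N$.

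This I would deduce from the estimate $\delta(N,\phi_s(N))\le Cs^2$ for all small $s\ge 0$, with $C$ independent of $s$, combined with a subdivision. For the estimate: the flow of $H$ has bounded speed, so every point of every $\phi_{s'}(N)$ with $0\le s'\le s$ lies within distance $C_1 s$ of $N$, while $H$ is globally Lipschitz, say with constant $C_2$, and vanishes on $N$, so $|H|<C_1C_2 s$ on the open set $W_s=\{\,\mathrm{dist}(\cdot,N)<C_1 s\,\}\supset\bigcup_{0\le s'\le s}\phi_{s'}(N)$. Post-composing $H$ with a smooth nondecreasing truncation $g\co\R\to\R$ that is the identity on $[-C_1C_2 s,\,C_1C_2 s]$ and is bounded by $2C_1C_2 s$ yields a compactly supported $K_s=g\circ H$ with $\max K_s-\min K_s\le 4C_1C_2 s$ and with the same Hamiltonian vector field as $H$ on $W_s$; since the $H$-flow issuing from $N$ stays in $W_s$ up to time $s$, it agrees there with the $K_s$-flow, so $\phi^{K_s}_s(N)=\phi_s(N)$ and therefore $\delta(N,\phi_s(N))\le\|\phi^{K_s}_s\|\le s(\max K_s-\min K_s)\le 4C_1C_2 s^2$. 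Finally, for fixed small $t$, the triangle inequality for $\delta$, its $Ham(M,\omega)$-invariance, and the flow identity $\phi_{a}^{-1}\circ\phi_{b}=\phi_{b-a}$ give, for every $m$, $\delta(N,\phi_t(N))\le\sum_{k=1}^{m}\delta(\phi_{(k-1)t/m}(N),\phi_{kt/m}(N))=m\,\delta(N,\phi_{t/m}(N))\le 4C_1C_2\,t^2/m$, which forces $\delta(N,\phi_t(N))=0$.

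The step I expect to be the main obstacle is exactly this last estimate — more precisely, arranging the truncated Hamiltonian $K_s$ so that it is genuinely compactly supported, agrees with $H$ on an honest open neighborhood of the entire region swept by $N$ (so the two flows coincide on $N$ by uniqueness of solutions of ODEs), and nonetheless has oscillation of order $s$. The conceptual content is that the hypothesis $v\in T_xN^{\omega}$ is precisely what lets us realize the motion of $x$ in the direction $v$ by a Hamiltonian that vanishes on $N$ and is therefore uniformly small on the thin normal region through which $N$ is pushed, so that breaking a fixed net displacement into $m\to\infty$ small increments sends the total Hofer cost to $0$; a displacement in a direction not in $T_xN^{\omega}$ could not be produced this cheaply. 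I would expect the same construction, applied with $N$ successively replaced by smaller loci playing the role of $P$, to drive the inductive argument of Section \ref{genwt}, which is presumably why the statement is phrased with the auxiliary submanifold $P$ rather than only the case $\mathcal{O}=P=N$ needed for Proposition \ref{onlyif} and Theorem \ref{addint}.
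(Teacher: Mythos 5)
Your proof is correct, and it attacks the lemma the same way the paper does: find a compactly supported Hamiltonian $H$ vanishing on $N$ whose flow realizes a prescribed direction at $x$, show the flow lies in $\bar{\Sigma}_N$, and apply Lemma \ref{RNlemma}(iv). The genuine divergence is in how you verify $\phi_t\in\bar{\Sigma}_N$. The paper truncates $H$ to $H_n=g_n\circ H$ with $g_n$ vanishing on $[-1/n,1/n]$, so that $H_n\equiv 0$ on a neighborhood of $N$ and the resulting flows lie in $\Sigma_N$; since $\osc(H_n-H)\to0$, those flows converge to $\phi^t$ in Hofer norm, exhibiting $\phi^t$ directly as a limit of stabilizer elements. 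You truncate in the dual way --- replacing $H$ by a $K_s$ that agrees with $H$ on a thin tube around $N$ but is uniformly of size $O(s)$ --- obtaining the quadratic bound $\delta(N,\phi_s(N))\le Cs^2$, and then you need the subdivision identity $\delta(N,\phi_t(N))\le m\,\delta(N,\phi_{t/m}(N))$ to squeeze this to zero. That works, but it costs you the auxiliary Lipschitz and flow-speed estimates and the reparametrization argument, whereas the paper's route gets $\delta=0$ in one stroke with no quantitative estimate. The remaining difference --- you fix $x\in R_N$ and show every $v\in T_xN^{\omega}$ lies in $T_xP$, while the paper takes $v\in T_xP^{\omega}\setminus T_xN$ and pushes $x$ off $P$ to conclude $x\notin R_N$ --- is simply the contrapositive, and cosmetic.
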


\begin{remark} This lemma may be slightly easier to decipher if one puts both $\mathcal{O}=N$ and $P=N$ (so that the condition $\mathcal{O}\cap R_N\subset P$ is vacuous)---in this case the conclusion is that $R_N$ is necessarily contained in the set of points $x$ at which $T_xN$ is a coisotropic subspace of $T_x M$.  Once one knows this, if this ``coisotropic locus'' is a smooth manifold, then one can apply the lemma again with $P$ equal to the coisotropic locus, and so conclude that $R_N$ is contained in a (possibly) still smaller set.  Indeed  this procedure can be iterated indefinitely; this is roughly speaking what we do in Section \ref{genwt}.
\end{remark}

\begin{proof}[Proof of Lemma \ref{rnp}]   Suppose that $x\in P$ does \emph{not} have the property that $T_xN^{\omega}\subset T_x P$; we will show that $x\notin R_N$.  

Taking $\omega$-orthogonal complements, our assumption on $x$ is equivalent to the statement  that there exists some element  $v\in T_{x}P^{\omega}\setminus T_{x}N$. We may then choose a smooth compactly-supported function $H\co M\to \R$  such that $H|_N=0$ but $dH(v)>0$.

For each positive integer $n$ let $g_n\co \R\to\R$ be a smooth function such that $g_n(s)=0$ for $|s|<\frac{1}{n}$,  $g_n(s)=s$ for $|s|>\frac{2}{n}$, and $g_{n}'(s)\geq 0$ for all $s$. 
Now define functions $H_n\co M\to \R$ by $H_n=g_n\circ H$.  Let $(\phi_{n}^{t})$ and $(\phi^t)$ denote the time-$t$ Hamiltonian flows of the functions $H_n$ and $H$ respectively.  

Now $H_n$ vanishes identically on a neighborhood of $N$ (namely $\{y ||H(y)|<1/n\}$), so $\phi_{n}^{t}$ acts as the identity on $N$ and so certainly $\phi_{n}^{t}\in \Sigma_{N}$ for all $n$ and $t$.  
Meanwhile since $g_n$ converges uniformly to the identity it holds that $H_n\to H$ uniformly, and so $\phi_{n}^{t}\to \phi_{n}$ with respect to the Hofer metric for all $t$.  Thus each $\phi^t\in \bar{\Sigma}_N$.

The function $H$ which generates the flow $(\phi^t)$ has $dH(v)>0$, where $v\in T_{x}P^{\omega}\setminus T_{x}N$.  Where $X_H$ is the Hamiltonian vector field of $H$, we thus have $\omega_x(v,X_H)\neq 0$, and so since $v\in T_{x}P^{\omega}$ we have $X_H(x)\notin T_xP$.  So for sufficiently small nonzero $t$ it will hold that $\phi^t(x)\notin P$ but $\phi^t(x)\in \mathcal{O}$.  But by Lemma \ref{RNlemma}(iv) we will have $\phi^t(R_N)=R_N$ for all $t$. So since $\mathcal{O}\cap R_N\subset P$ by assumption, it must be that $x\notin R_N$, as desired.
\end{proof}
 
\begin{cor}\label{ncdeg} Let $N\subset M$ be any submanifold which is not coisotropic.  Then the Chekanov--Hofer pseudometric $\delta$ on $\mathcal{L}(N)$ is degenerate (\emph{i.e.}, $N$ is not CH-rigid).
\end{cor}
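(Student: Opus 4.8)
The plan is to deduce Corollary \ref{ncdeg} directly from Lemma \ref{rnp} together with Lemma \ref{RNlemma}(ii). The hypothesis is that $N\subset M$ is a submanifold which is not coisotropic, meaning there is some point $x\in N$ at which $T_xN^{\omega}\not\subset T_xN$. I want to conclude that $N$ is not CH-rigid, which by Lemma \ref{RNlemma}(ii) is equivalent to showing $R_N\neq N$.

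First I would apply Lemma \ref{rnp} in the simplest case, taking $\mathcal{O}=N$ and $P=N$, so that the hypothesis $\mathcal{O}\cap R_N\subset P$ is vacuously satisfied. The conclusion of the lemma is then that $R_N\subset\{x\in N\mid T_xN^{\omega}\subset T_xN\}$, i.e. that the rigid locus is contained in the ``coisotropic locus'' of $N$. Since $N$ is assumed not to be coisotropic, there exists a point $x_0\in N$ with $T_{x_0}N^{\omega}\not\subset T_{x_0}N$; such a point lies in $N$ but not in the coisotropic locus, hence $x_0\notin R_N$. Therefore $R_N$ is a proper subset of $N$.

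To finish, I would invoke Lemma \ref{RNlemma}(ii), which states that $R_N=N$ if and only if $N$ is CH-rigid. Since we have shown $R_N\neq N$, it follows that $N$ is not CH-rigid, i.e. the Chekanov--Hofer pseudometric $\delta$ on $\mathcal{L}(N)$ is degenerate, as claimed. (One should note that when $N$ is not coisotropic it is automatically a proper closed subset of $M$, since a connected component of $M$ contained in $N$ would be coisotropic, so the hypotheses of Lemma \ref{RNlemma} are met; alternatively, if $N$ is not closed as a subset the statement should be read via its closure, but the corollary as stated concerns submanifolds and the relevant case is the closed one handled by Proposition \ref{onlyif}.)

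There is essentially no obstacle here: all the substantive work has already been done in the construction of the explicit elements of $\bar\Sigma_N$ inside the proof of Lemma \ref{rnp}. The only thing to be careful about is the bookkeeping of which specialization of $\mathcal{O}$ and $P$ to feed into Lemma \ref{rnp}, and confirming that the degenerate case ($N$ all of $M$, or a component of $M$) cannot arise under the non-coisotropy hypothesis.
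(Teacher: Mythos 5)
Your proof is correct and is essentially identical to the paper's own argument: both apply Lemma \ref{rnp} with $\mathcal{O}=P=N$ to get $R_N\subset\{x\in N\mid T_xN^{\omega}\subset T_xN\}$, observe that non-coisotropy of $N$ gives a point of $N$ outside that set so $R_N\neq N$, and conclude via Lemma \ref{RNlemma}(ii). Your added remark about why the "proper closed subset" hypothesis of Lemma \ref{RNlemma} is satisfied is a sensible precision that the paper leaves implicit (the closedness hypothesis is stated explicitly in Proposition \ref{onlyif} but omitted in the corollary as written).
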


\begin{proof} Applying Lemma \ref{rnp} with $\mathcal{O}=P=N$, we see that if $R_N=N$ then we must have $T_{x}N^{\omega}\subset T_{x}N$ for all $x\in N$, \emph{i.e.} $N$ is coisotropic.  So if $N$ is not coisotropic then $R_N\neq N$, so by Lemma \ref{RNlemma}(ii) $\delta$ must be degenerate.
\end{proof}

\begin{dfn} A submanifold $N$ of a symplectic manifold $(M,\omega)$ is called \emph{nowhere coisotropic} if for all $x\in N$ we have $T_xN^{\omega}\setminus T_x N\neq\varnothing$.
\end{dfn}

The following restates Theorem \ref{addint}.

\begin{cor} \label{vanish} Let $N$ be a submanifold of the symplectic manifold $(M,\omega)$ which is closed as a subset and is nowhere coisotropic.  Then $N$ is weightless.
\end{cor}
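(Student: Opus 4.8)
The plan is to deduce weightlessness from the emptiness of the rigid locus, using the machinery already assembled in this section. By the first half of Lemma \ref{RNlemma}(iii) it suffices to prove that $R_N=\varnothing$, so the entire argument collapses to a single application of Lemma \ref{rnp}.

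First I would check that Lemmas \ref{RNlemma} and \ref{rnp} are applicable, \emph{i.e.}\ that $N$ is a \emph{proper} closed subset of $M$ (the hypothesis that $N$ be a submanifold closed as a subset is given). This is automatic: any coisotropic submanifold $C\subset(M,\omega)$---in particular $M$ itself, for which $T_xM^{\omega}=\{0\}\subset T_xM$ at every point---satisfies $T_xC^{\omega}\subset T_xC$ everywhere, so a nonempty nowhere coisotropic $N$ cannot be all of $M$. (If $N=\varnothing$ there is nothing to prove, as $\mathcal{L}(\varnothing)=\{\varnothing\}$.)

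Next, apply Lemma \ref{rnp} with $\mathcal{O}=P=N$, exactly as in the proof of Corollary \ref{ncdeg}; here the hypothesis $\mathcal{O}\cap R_N\subset P$ is vacuous. The conclusion is that $R_N\subset\{x\in N|T_{x}N^{\omega}\subset T_xN\}$, \emph{i.e.}\ the rigid locus is contained in the ``coisotropic locus'' of $N$. But the assumption that $N$ is nowhere coisotropic says precisely that $T_{x}N^{\omega}\setminus T_xN\neq\varnothing$ for every $x\in N$, so this coisotropic locus is empty; hence $R_N=\varnothing$. Feeding $R_N=\varnothing$ into Lemma \ref{RNlemma}(iii) then yields that $N$ is weightless, and the proof is complete.

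There is essentially no obstacle at this level: all of the real work has already been done in the proofs of Lemma \ref{rnp} (the explicit construction of elements of $\bar{\Sigma}_N$ that flow points of $N$ off of $N$ along directions lying in $T_xN^{\omega}\setminus T_xN$) and of the first half of Lemma \ref{RNlemma}(iii) (Banyaga's fragmentation lemma), so this corollary is merely the composition of those two facts. The only point worth emphasizing is that no geometric boundedness or compactness hypothesis on $(M,\omega)$ or on $N$ enters, since neither Lemma \ref{rnp} nor the relevant half of Lemma \ref{RNlemma}(iii) requires one.
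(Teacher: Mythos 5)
Your proof is correct and is essentially identical to the paper's: apply Lemma \ref{rnp} with $\mathcal{O}=P=N$ to see that the nowhere-coisotropic hypothesis forces $R_N=\varnothing$, then invoke the first half of Lemma \ref{RNlemma}(iii). The small preliminary check that $N$ is necessarily a proper subset of $M$ (so that Lemma \ref{RNlemma} applies) is a nice bit of care that the paper leaves implicit, but it does not change the argument.
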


\begin{proof} Again applying Lemma \ref{rnp} with $P=N$, we see that if $N$ is nowhere coisotropic then we must have $R_N=\varnothing$, which implies that $N$ is weightless by Lemma \ref{RNlemma}(iii).
\end{proof}

Recall that a compact submanifold $N$ of a symplectic manifold $(M,\omega)$ is called \emph{infinitesimally displaceable} if there is a smooth function $H\co M\to \R$ such that the Hamiltonian vector field $X_H$ of $H$ has the property that $X_H(x)\notin T_xN$ for all $x\in N$.  Of course for $N$ to be infinitesimally displaceable it is necessary for the normal bundle of $N$ in $M$ to have a nowhere-vanishing section.  Conversely, results of \cite{LS}, \cite{P95}, and \cite{Gu} show that if $N$  is nowhere coisotropic, or if $\dim N=\frac{1}{2}\dim M$ but $N$ is  not Lagrangian, then $N$ will be infinitesimally displaceable provided that its normal bundle has a nowhere-vanishing section.  We have, somewhat consistently with Corollary \ref{vanish}:

\begin{prop}\label{infwt} If $N\subset M$ is a compact submanifold which is infinitesimally displaceable then $N$ is weightless.
\end{prop}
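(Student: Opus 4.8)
The plan is to show that the rigid locus $R_N$ is empty and then quote Lemma \ref{RNlemma}(iii). Choose a smooth $H\co M\to\R$ realizing the infinitesimal displacement, so $X_H(x)\notin T_xN$ for all $x\in N$; after multiplying $H$ by a cutoff function equal to $1$ near $N$ we may assume $H$ is compactly supported (the cutoff has vanishing differential near $N$, so transversality of $X_H$ along $N$ is unaffected), whence $\phi^t_H\in Ham(M,\omega)$. Since $N$ is compact and $X_H$ is transverse to $N$, there is $t_0>0$ with $\phi^t_H(N)\cap N=\varnothing$ for all $t$ with $0<|t|\le t_0$, as already noted in the discussion preceding Theorem \ref{inf0}. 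I claim it suffices to prove that $\phi^t_H\in\bar\Sigma_N$ for some such $t$. Indeed, setting $N'=\phi^t_H(N)\in\mathcal{L}(N)$, Proposition \ref{stabclosure} would give $\delta(N,N')=0$, while $N\cap N'=\varnothing$; so Lemma \ref{RNlemma}(v) would give $R_N\subseteq N\cap N'=\varnothing$, and then the first sentence of Lemma \ref{RNlemma}(iii) shows $N$ is weightless. (No extra hypotheses are needed here, since $N$ being a compact submanifold is certainly a proper closed subset.)

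\textbf{Reducing to an approximation statement.} To prove $\phi^t_H\in\bar\Sigma_N$ I would follow the template of the proof of Lemma \ref{rnp} and approximate $\phi^t_H$, in the Hofer metric, by elements of the stabilizer $\Sigma_N$: produce compactly supported Hamiltonians whose time-one maps fix $N$ setwise while converging to $\phi^t_H$. When $H|_N\equiv 0$ this is literally the truncation trick of Lemma \ref{rnp} (replace $H$ by $g_n\circ H$ with $g_n$ flat near $0$), and it in fact reproves Corollary \ref{vanish}. The difficulty is that infinitesimal displaceability forbids the choice $H|_N\equiv 0$: by the $\omega$-orthogonality computation in the proof of Lemma \ref{rnp}, at any point $x\in N$ where $N$ is coisotropic there is no Hamiltonian that simultaneously vanishes along $N$ and has Hamiltonian vector field transverse to $N$ at $x$. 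So the truncation must be modified near the coisotropic locus of $N$, and this is precisely where the argument must use more than Lemma \ref{rnp}.

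\textbf{The construction I would attempt.} I would work in a neighbourhood of $N$ adapted to the flow of $X_H$: on the slab $S$ swept out by short-time flowing of $N$, the map $(x,s)\mapsto\phi^s_H(x)$ is an embedding on which $X_H=\partial_s$ and $H$ is constant in $s$, and I would build the approximating elements of $\Sigma_N$ by altering $H$ only inside an ever-thinner neighbourhood of $N$, arranged so that the Hofer discrepancy from $\phi^t_H$ is concentrated there and tends to zero; the normal form of $\omega$ transverse to $S$ would be needed to extend this over a full neighbourhood. Equivalently, the task is: for every $\epsilon>0$ produce $\psi\in Ham(M,\omega)$ with $\psi(N)=\phi^t_H(N)$ and $\|\psi\|<\epsilon$.

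\textbf{The main obstacle.} That last point is exactly where the real work lies: passing from the easy bound $\delta\bigl(N,\phi^t_H(N)\bigr)\le t\,\osc(H)$ to the exact equality $\delta\bigl(N,\phi^t_H(N)\bigr)=0$. Because $H$ is constant along its own flow lines, one cannot shrink $\osc(H)$ near $N$ without lengthening the flow time required to disjoin $N$ from itself, so a genuinely zero energy---rather than merely a small one---must be extracted by carefully exploiting that transversality of $X_H$ makes the disjoining happen in arbitrarily short time, together with the local structure of $\omega$ along $N$. I expect this analysis around the coisotropic locus of $N$ to be the heart of the proof.
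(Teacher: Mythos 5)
Your reduction is on the right track: you correctly observe that it suffices to find $t$ with $\phi^t_H(N)\cap N=\varnothing$ and $\delta\bigl(N,\phi^t_H(N)\bigr)=0$, after which Lemma \ref{RNlemma}(v) gives $R_N=\varnothing$ and Lemma \ref{RNlemma}(iii) gives weightlessness. But the proposal then stalls. The paragraph labeled ``The construction I would attempt'' is a sketch with no verified mechanism, and the final paragraph explicitly concedes that you do not know how to pass from the trivial bound $\delta\bigl(N,\phi^t_H(N)\bigr)\le t\,\osc(H)$ to the exact vanishing. That concession is accurate: as written, there is a genuine gap at precisely the step you flag.

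The missing idea is that you should not try to build small-Hofer-norm diffeomorphisms carrying $N$ directly to $\phi^{\ep_0}_H(N)$; instead, build an element of $\Sigma_N$ that carries $\phi^{\eta}_H(N)$ to $\phi^{\ep_0}_H(N)$ and then invoke the $Ham(M,\omega)$-invariance of $\delta$. Concretely, after rescaling so $\osc(H)=1$, write $\phi_t$ for the flow of $X_H$, and choose $\ep_0>0$ with $\phi_t(N)\cap N=\varnothing$ for $0<|t|\le\ep_0$. For any $\eta\in(0,\ep_0)$, pick a cutoff $\beta\co M\to[0,1]$ with $\beta\equiv 0$ near $N$ and $\beta\equiv 1$ on a neighborhood of $\bigcup_{t\in[\eta,\ep_0]}\phi_t(N)$, and let $\psi_s$ be the flow of $K=\beta H$. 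Because $K\equiv 0$ near $N$, each $\psi_s$ fixes $N$ and so lies in $\Sigma_N$; because $K\equiv H$ on a neighborhood of the slab $\bigcup_{t\in[\eta,\ep_0]}\phi_t(N)$, and $\phi_t(\phi_\eta(N))$ stays inside that slab for $t\in[0,\ep_0-\eta]$, we get $\psi_{\ep_0-\eta}(\phi_\eta(N))=\phi_{\ep_0}(N)$. The $Ham$-invariance of $\delta$ then yields
\[
\delta\bigl(N,\phi_{\ep_0}(N)\bigr)=\delta\bigl(\psi_{\ep_0-\eta}(N),\psi_{\ep_0-\eta}(\phi_\eta(N))\bigr)=\delta\bigl(N,\phi_\eta(N)\bigr)\le\eta,
\]
and since $\eta\in(0,\ep_0)$ was arbitrary, $\delta\bigl(N,\phi_{\ep_0}(N)\bigr)=0$, which is what your reduction requires. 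Note that this completely sidesteps the concerns in your last paragraph: one never shrinks $\osc(H)$, never invokes a normal form for $\omega$ along $N$, and never does any separate analysis near the coisotropic locus. Your statement that ``one cannot shrink $\osc(H)$ near $N$ without lengthening the flow time'' is true but irrelevant once the invariance trick is available; the coisotropic-locus difficulties you anticipate simply do not arise.
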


\begin{proof} Choose a compactly-supported Hamiltonian $H\co M\to \R$ so that $X_H$ is nowhere-tangent to $N$; by rescaling we may as well assume that $\max H-\min H=1$.  For any $t\in \R$ let $\phi_t$ denote the time-$t$ flow of $X_H$, so we have $\|\phi_t\|\leq |t|$ for all $t$.  Since $X_H$ is nowhere-tangent to $N$ and $N$ is compact, we may choose $\ep_0>0$ so that \[ \phi_t(N)\cap N=\varnothing\quad\mbox{ whenever }0<|t|\leq\ep_0.\] Let  $\eta$ be any number with $0<\eta<\ep_0$, and let $\beta\co M\to [0,1]$ be a smooth function such that $\beta=1$ on a neighborhood of $\cup_{t\in [\eta,\ep_0]}\phi_t(N)$ and $\beta=0$ on a neighborhood of $N$.  Let $K=\beta H$, and let $\{\psi_t\}$ be the Hamiltonian flow of $K$.  Then since $K$ vanishes on a neighborhood of $N$ we have $\psi_t(N)=N$ for all $t$.  Meanwhile since $K$ coincides with $H$ on a neighborhood of $\cup_{t\in [\eta,\ep_0]}\phi_t(N)$, and since $\phi_t(\phi_{\eta}(N))$ remains in this neighborhood for all $t\in [0,\ep_0-\eta]$, we have \[ \psi_{\ep_0-\eta}(\phi_{\eta}(N))=\phi_{\ep_0-\eta}(\phi_{\eta}(N))=\phi_{\ep_0}(N).\]  So by the invariance of $\delta$ we have \[ \delta(N,\phi_{\ep_0}(N))=\delta\left(\psi_{\ep_0-\eta}(N),\psi_{\ep_0-\eta}(\phi_{\eta}(N))\right)=\delta(N,\phi_{\eta}(N)).\]  But $\delta(N,\phi_{\eta}(N))\leq \eta$ and $\eta\in (0,\ep_0]$ was arbitrary, so we have $\delta(N,\phi_{\ep_0}(N))=0$.  But $N\cap \phi_{\ep_0}(N)=\varnothing$, so by Lemma \ref{RNlemma}(v) we see that $R_N=\varnothing$.  Thus by Lemma \ref{RNlemma}(iii), $N$ is weightless.
\end{proof}

\subsection{Lagrangian submanifolds}\label{lagsec}
Having established results which allow us to show that the rigid locus $R_N$ is small in some cases, we now set about proving a result (Corollary \ref{lagopen} below) which can sometimes be used to show that $R_N$ is large.

Recall from \cite[Chapter X]{ALP} that a symplectic manifold $(M,\omega)$ is called \emph{geometrically bounded} if there exists an almost complex structure $\hat{J}$ and a complete Riemannian metric $\langle\cdot,\cdot\rangle$ on $M$ such that:\begin{itemize} \item There are constants $c_1,c_2>0$ such that for all $m\in M$ and  $v,w\in T_m M$ we have $\omega(v,\hat{J}v)\geq c_1\langle v,v\rangle$ and $|\omega(v,w)|^2\leq c_2\langle v,v\rangle\langle w,w\rangle$.
\item The Riemannian manifold $(M,\langle\cdot,\cdot\rangle)$ has sectional curvature bounded above and injectivity radius bounded away from zero.\end{itemize}  In particular such manifolds are tame in the sense of \cite[Chapter V]{ALP} and so satisfy the compactness theorems therein for $\omega$-tame almost complex structures which agree with $\hat{J}$ outside of a compact set.

The following result can be deduced from \cite[Theorem J]{FOOO} under suitable unobstructedness assumptions on $L$ and $L'$ and from results of \cite[Section 3.2.3.B]{BC06} when $L$ and $L'$ are Hamiltonian isotopic; however the general case does not seem to be in the literature.

\begin{theorem}\label{disptvs} Let $L$ and $L'$ be two compact Lagrangian submanifolds  of a geometrically bounded symplectic manifold $(M,\omega)$.  Assume that the intersection of $L$ and $L'$ is nonempty and transverse.  Then there is $\delta>0$ such that for any $\phi\in Ham(M,\omega)$ with $\phi(L)\cap L'=\varnothing$ we have $\|\phi\|\geq \delta$.
\end{theorem}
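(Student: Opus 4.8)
The plan is to deduce a quantitative lower bound on $\|\phi\|$ from the non-vanishing of a Floer-theoretic quantity attached to the pair $(L,L')$, in the spirit of Chekanov's displacement-energy estimate in \cite{Ch98} but now measuring displacement of $L$ \emph{off of $L'$} rather than off of itself. Concretely, I would use the intersection Floer homology $HF(L,L')$ (or, since $(M,\omega)$ is only geometrically bounded and $L,L'$ need not be unobstructed in the usual sense, a suitable version built from the pearl/cluster complex or from counting $\hat J$-holomorphic strips with small energy, using the tameness/compactness package recalled just before the statement). The key point is that $L\cap L'\neq\varnothing$ and transversality guarantee the Floer complex is nonzero; the mechanism to be exploited is that a Hamiltonian $\phi=\phi_H^1$ with $\phi(L)\cap L'=\varnothing$ would produce an \emph{acyclic} continuation-type complex, and the energy of the relevant holomorphic curves is controlled by the Hofer norm of $H$. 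Running this comparison forces $\int_0^1(\max H-\min H)\,dt$ to exceed a positive constant $\delta$ determined by the geometry of $L,L'$ and the ambient taming data.

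**Key steps, in order.** First I would set up the relevant Floer-type invariant for the pair $(L,L')$: pick the almost complex structure $\hat J$ and complete metric from the geometrically-bounded hypothesis, observe that all $\hat J$-holomorphic strips with boundary on compact $L,L'$ of bounded energy stay in a compact set (by the monotonicity/isoperimetric estimates valid for tame manifolds, \cite[Chapter V]{ALP}), and hence the usual Gromov compactness and transversality arguments go through to define a complex with nonzero homology --- the crucial input being $L\cap L'\neq\varnothing$ transverse, so the complex is at least nonzero, and one can arrange (e.g. by the pearl-complex formalism, or by a spectral-sequence/fundamental-class argument) that its homology, or at least a suitable ``lowest energy'' invariant, does not vanish. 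Second, I would introduce the quantity $\delta>0$ as the minimal energy of a nonconstant $\hat J$-holomorphic strip, disk, or sphere with boundary on $L$ or $L'$ that is relevant to the differential --- this is positive by compactness and the fact that $L,L'$ are compact embedded submanifolds. Third, given $\phi\in Ham(M,\omega)$ with $\phi(L)\cap L'=\varnothing$, write $\phi=\phi_H^1$ and consider the Floer problem for $(\phi_H^t(L),L')$, equivalently a Hamiltonian-perturbed strip equation; the assumption $\phi(L)\cap L'=\varnothing$ means there are no time-$1$ chords, so a standard continuation/cobordism argument shows the complex for $(L,L')$ must be computed by a complex that ``passes through empty data,'' forcing some curve of energy at most (a quantity bounded by) $\int_0^1(\max H-\min H)\,dt$ to exist, whence $\|\phi\|\geq\delta$. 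Taking the infimum over all $H$ generating $\phi$ gives the stated bound.

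**The main obstacle.** The hard part will be the first step: setting up an honest, well-defined Floer-type invariant of the pair $(L,L')$ in this generality, i.e.\ without the unobstructedness hypotheses of \cite{FOOO} or the Hamiltonian-isotopy hypothesis of \cite{BC06}, and extracting from it a \emph{nonzero} numerical invariant that survives the bubbling that may occur for non-monotone, possibly non-orientable Lagrangians in a merely geometrically bounded manifold. The cleanest route is probably to avoid defining the full homology and instead argue directly with a ``minimal action/energy'' argument à la Chekanov: use the compactness package to show that either $L\cap\phi(L')\neq\varnothing$ persists --- contradicting displacement --- or else a holomorphic strip/disk of definite energy $\geq\delta$ is forced, with $\delta$ depending only on the isoperimetric constant from the taming data and on $L,L'$ (not on $\phi$). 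Making this dichotomy precise, and in particular ruling out escape to infinity using completeness of the metric and the injectivity-radius/curvature bounds, is where the real work lies; once that is in place the energy estimate $\|\phi\|\geq\delta$ is routine from the standard relation between the Hofer length of $H$ and the geometric energy of the perturbed curves.
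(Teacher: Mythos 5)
Your instincts are right on both counts: the Floer-homology route (your Plan A) is exactly what the paper says one \emph{cannot} carry out in this generality without unobstructedness or Hamiltonian-isotopy hypotheses, and your fallback ``minimal energy / direct dichotomy'' sketch is indeed the strategy the paper adopts. But the sketch is missing the specific mechanism that makes the dichotomy precise, which is the whole substance of the proof. The paper, following Oh's argument for $L=L'$, introduces a $\lambda$-parametrized family of perturbed Cauchy--Riemann boundary value problems with $\lambda\in[0,1]$ (a Hamiltonian term cut off by a bump function $\beta_R$), fixes a transverse intersection point $p\in L\cap L'$, and considers the one-dimensional parametrized moduli space $\mathcal{M}^{0,R}_{J',H}(p)$ of solutions asymptotic to $p$ at both ends and representing the trivial homotopy class. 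The key observations are: (a) the energy estimate bounds the energy of any solution by $\lambda\|H\|$; (b) if $\|H\|\leq\delta$ with $\delta$ below the bubbling/breaking threshold $\delta_J$, this moduli space is a \emph{compact} one-manifold with boundary, so has an even number of boundary points; (c) at $\lambda=0$ the unique solution is the constant map to $p$, so by parity there must be a solution at $\lambda=1$; (d) letting $R\to\infty$ and running Arzela--Ascoli plus elliptic bootstrapping on the low-energy slices $\gamma_R(\cdot)=u(s_R,\cdot)$ produces an honest Hamiltonian chord $\gamma$ from $L$ to $L'$, i.e.\ $\phi(L)\cap L'\neq\varnothing$. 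This completely sidesteps the issue you flag: one never needs to know that any Floer \emph{homology} is nonzero, nor does one need a ``lowest energy'' invariant surviving bubbling --- the parity of a compact one-manifold and the explicit count at $\lambda=0$ do all the work. Note also a small slip in your sketch: you write of $L\cap\phi(L')\neq\varnothing$ persisting, but the logic runs the other way --- one shows that \emph{if} $\|\phi\|<\delta$ \emph{then} $\phi(L)\cap L'\neq\varnothing$, which is the contrapositive of the stated bound. Finally, your worry that ``the Floer complex is nonzero'' suffices is indeed insufficient (a nonzero complex can have vanishing homology), which is another reason the paper's parity argument is the right tool here.
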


\begin{proof} Our argument is similar to that in \cite{Oh} (in which $L$ and $L'$, instead of being transverse, are equal). Suppose that $H\co [0,1]\times M\to\R$ is any compactly supported smooth function, whose Hamiltonian vector field at time $t\in [0,1]$ is given by $X_{H}(t,\cdot)$. Choose any  smooth family  $J=\{J_t\}_{t\in [0,1]}$ of almost complex structures with $J_0=J_1$, all of which  coincide outside a fixed compact set with some fixed almost complex structure $\hat{J}$ as in the definition of the geometrical boundedness of $(M,\omega)$.  Choose $\delta>0$ such that $\delta<\delta_J$ where $\delta_J$ is the minimum of: \begin{itemize} \item the smallest energy of a nonconstant $J_t$-holomorphic sphere as $t$ varies through $[0,1]$ \item the smallest energy of a nonconstant $J_0$-holomorphic disc with boundary on either $L$ or $L'$ \item the smallest energy of a nonconstant finite-energy map $u\co \R\times[0,1]\to M$ such that $\frac{\partial u}{\partial s}+J_t\frac{\partial u}{\partial t}=0$ and  $u(s,0)\in L$ and $u(s,1)\in L'$ for all $s\in \R$.  \end{itemize}  Of course, Gromov--Floer compactness (\cite[Chapters V, X]{ALP}, \cite[Proposition 2.2]{Fl}) implies that $\delta_J>0$, and that for any family of almost complex structures $J'$ sufficiently $C^{1}$-close to $J$ such that each $J'_t$ coincides with $\hat{J}$ outside a compact set we will have $\delta<\delta_{J'}$. 

For any $R>0$ let $\beta_R\co \R\to [0,1]$ be a smooth function such that $\beta_R(s)=1$ for $|s|\leq R$, $\beta_R(s)=0$ for $|s|\geq R+1$, and $s\beta'_{R}(s)\leq 0$ for all $s$.  

For any $\lambda\in [0,1]$ and $R>0$, consider solutions $u\co \R\times [0,1]\to M$ to the boundary value problem \begin{align}
\label{lR}\frac{\partial u}{\partial s}+ J_t\left(\frac{\partial u}{\partial t}-\lambda\beta_R(s)X_H(t,u(s,t))\right)&=0  \nonumber
\\ u(s,0),\,
u(s,1)&\in L' \end{align}

Since $\beta_R(s)=0$ for $|s|> R+1$ and since $L$ is transverse to $L'$, it follows as in the sentence after \cite[Proposition 2.2]{Fl} that for any finite-energy solution $u$ there will be points $p_{\pm}\in  L\cap L'$ so that $u(s,t)\to p_{\pm}$ uniformly in $t$ as $s\to\pm\infty$,  where the energy of $u$ is defined by $E(u)=\int_{\R\times[0,1]}\left|\frac{\partial u}{\partial s}\right|^{2}_{J}dsdt$.  So a finite-energy solution $u$ to (\ref{lR}) extends continuously to a map $u\co [-\infty,\infty]\times [0,1]\to M$ with $u([\infty,\infty]\times\{0\})\subset L$ and $u([\infty,\infty]\times\{1\})\subset L'$.  
Choose one point $p\in L\cap L'$.  From now on we only consider finite-energy solutions $u$ to (\ref{lR}) such that $u(s,t)\to p$ uniformly in $t$ both as $s\to-\infty$ and as $s\to +\infty$, so that  $s\mapsto u(s,\cdot)$ gives a loop in the space of paths from $L_0$ to $L_1$, and we moreover restrict attention to those $u$ such that this associated loop is homotopic to a constant.  Since $L_0$ and $L_1$ are Lagrangian, it is easy to see from Stokes' theorem that this homotopical assumption on $u$ implies that $\int_{\R\times [0,1]}u^*\omega=0$.  Now for any such $u$ which obeys (\ref{lR}) for  given values of $\lambda$ and $R$ we have the familiar energy estimate \begin{align*}
E(u)&=\int_{\R\times[0,1]}\left|\frac{\partial u}{\partial s}\right|^{2}_{J}dsdt=\int_{0}^{1}\int_{-\infty}^{\infty}\omega\left(\frac{\partial u}{\partial s},\frac{\partial u}{\partial t}-\lambda\beta_{R}(s)X_{H}(t,u(s,t))\right)dsdt 
\\&= \int_{\R\times [0,1]}u^*\omega-\int_{0}^{1}\int_{-\infty}^{\infty}\lambda\beta_R(s)d(H(t,\cdot))\left(\frac{\partial u}{\partial s}\right)dsdt 
\\&=-\int_{0}^{1}\int_{-\infty}^{\infty}\left(\frac{d}{ds}\left(\lambda\beta_R(s)H(t,u(s,t))\right)-\lambda\beta_{R}'(s)H(t,u(s,t))\right)dsdt
\\&\leq \lambda\int_{0}^{1}\left(\max_MH(t,\cdot)-\min_M H(t,\cdot)\right)dt=\lambda\|H\|.
\end{align*}

Here we use that, for all $t$, $\int_{-\infty}^{\infty}\frac{d}{ds}\left(\lambda\beta_R(s)H(t,u(s,t))\right)ds=0$ by the Fundamental Theorem of Calculus, while the assumed properties of $\beta_R$ ensure that \[ \int_{-\infty}^{0}\beta_{R}'(s)H(t,u(s,t))ds\leq \max_M H(t,\cdot)\quad \mbox{and}\quad \int_{0}^{\infty}\beta_{R}'(s)H(t,u(s,t))ds\leq -\min_MH(t,\cdot).\]

In particular the energy estimate above implies that the unique solution to (\ref{lR}) with the prescribed asymptotic and topological behavior for $\lambda=0$ is the constant solution $u(s,t)=p$.

Now suppose that our Hamiltonian $H\co [0,1]\times M\to\R$ obeys $\|H\|\leq \delta$.

For any $R>0$, and for any family  of $\omega$-compatible almost complex structures $J'=\{J'_t\}_{t\in[0,1]}$ with $J'_{1}=J'_{0}$, let $\mathcal{M}_{J',H}^{0,R}(p)$ denote the set of pairs $(\lambda,u)$ where $\lambda\in [0,1]$ and $u$ is a finite-energy solution to (\ref{lR}) for the given values of $\lambda$ and $R$, with $J'$ playing the role of the family of almost complex structures, such that $u$ is asymptotic at both ends to $p$ and  such that the associated loop of paths from $L$ to $L'$ is null-homotopic.  Standard arguments (essentially the same as those in \cite[Proposition 3.2]{Oh93}, \cite[p. 902]{Oh}) show that, for families of almost complex structures  $J'$ which are generic among those coinciding with $\hat{J}$ outside of a fixed precompact open set containing $L\cup L'$,
$\mathcal{M}_{J',H}^{0,R}(p)$ can be given the structure of a $1$-manifold with boundary where the boundary consists of the subsets corresponding to $\lambda=0$ and $\lambda=1$.  Moreover, provided that $J'$ is sufficiently close to $J$ this manifold with boundary is compact: indeed the only possible degenerations involve either bubbling of a holomorphic sphere or of a holomorphic disc with boundary on $L$ or $L'$, or else ``trajectory breaking'' involving a holomorphic strip $v\co \R\times S^1\to M$ with $v(\R\times\{0\})\subset L$ and $v(\R\times \{1\})\subset L'$.  But if $J'$ is sufficiently close to $J$ (so that what we previously denoted $\delta_{J'}$ is larger than $\delta$), then our energy estimate together with the fact that $\|H\|\leq \delta$ implies that the elements of $\mathcal{M}_{J',H}^{0,R}(p)$ have energy bounded above by a number smaller than $\delta_{J'}$, so that no bubbling or trajectory breaking can occur. 

As noted earlier, the part of the boundary of $\mathcal{M}_{J',H}^{0,R}(p)$ corresponding to $\lambda=0$ consists only of the constant map to $p$.   So since a compact $1$-manifold with boundary necessarily has an even number of boundary points, the part of the boundary of $\mathcal{M}_{J',H}^{0,R}(p)$ corresponding to $\lambda=1$ must be nonempty whenever $J'$ is sufficiently $C^1$-close to $J$.  Another application of Gromov compactness (taking the limit as $J'$ approaches $J$ and again using the energy bound to preclude bubbling and trajectory breaking) shows that the part of $\mathcal{M}_{J,H}^{0,R}(p)$ corresponding to $\lambda=1$ is also nonempty (even if $\mathcal{M}_{J,H}^{0,R}(p)$ is not itself a manifold).

Thus we have shown that, for any $R>0$, there is a solution $u\co \R\times [0,1]\to M$ to the $\lambda=1$ version of (\ref{lR}) asymptotic at both ends to $p$ whose associated loop of paths from $L$ to $L'$ is nullhomotopic.  Consequently the energies of all of these solutions are necessarily bounded above by $\|H\|\leq \delta$.  But then for any $R>0$ there must be $s_R\in [-R,R]$ such that the path $\gamma_R(t)=u(s_R,t)$ obeys \begin{equation}\label{gr}\int_{0}^{1}|\dot{\gamma}_R(t)-X_{H}(t,\gamma_R(t))|_{J}^{2}dt<\frac{\delta}{2R}.\end{equation} Morrey's inequality then bounds the $C^{1/2}$-norm of the $\gamma_R$, and hence the Arzela--Ascoli theorem yields a sequence $R_j\to\infty$ and a continuous path $\gamma\co [0,1]\to M$ such that $\gamma_{R_j}\to \gamma$ uniformly as $j\to\infty$ (so in particular $\gamma(0)\in L$ and $\gamma(1)\in L'$).   But then $X_H(t,\gamma_{R_j}(t))\to X_H(t,\gamma(t))$ uniformly in $t$, so by again applying (\ref{gr}) we see that the sequence $\{\dot{\gamma}_{R_j}\}_{j=1}^{\infty}$ is Cauchy in $L^2$.  Consequently $\gamma$ is the limit of $\gamma_{R_j}$ in the Sobolev space $W^{1,2}$, and in particular $\gamma$ has at least a weak derivative $\dot{\gamma}$ in $L^2$, which is equal to $t\mapsto X_{H}(t,\gamma(t))$.  But then since $\gamma$ is now known to be of class $W^{1,2}$ this latter function is also of class $W^{1,2}$, \emph{i.e.}, $\dot{\gamma}$ is of class $W^{1,2}$, and so $\gamma$ is of class $W^{2,2}$.  So by another application of Morrey's inequality $\gamma$ is $C^1$, and so is a genuine solution of the differential equation $\dot{\gamma}(t)=X_H(t,\gamma(t))$, satisfying $\gamma(0)\in L$, $\gamma(1)\in L'$. So where $\phi$ is the time-one map of $H$ the point $\gamma(1)$ lies in both $L'$ and $\phi(L)$.

This proves that any Hamiltonian diffeomorphism $\phi$ of Hofer norm at most $\delta$ necessarily satisfies $\phi(L)\cap L'\neq\varnothing$, as desired.
\end{proof}

\begin{cor}\label{lagcor} Let $(M,\omega)$ be a geometrically bounded symplectic manifold, $L\subset M$ a compact Lagrangian submanifold, and $U\subset M$ an open subset such that $L\cap U\neq\varnothing$.  Then there is $\delta>0$ such that if $\phi\in Ham(M,\omega)$ and $\|\phi\|<\delta$ then $\phi(L)\cap U\neq \varnothing$.
\end{cor}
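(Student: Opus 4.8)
The plan is to reduce the statement immediately to Theorem \ref{disptvs} by manufacturing a suitable second Lagrangian inside $U$. Concretely, I will construct a compact Lagrangian submanifold $L' \subset U$ whose intersection with $L$ is nonempty and transverse. Granting such an $L'$, Theorem \ref{disptvs} supplies a number $\delta > 0$ with the property that every $\phi \in Ham(M,\omega)$ satisfying $\phi(L) \cap L' = \varnothing$ has $\|\phi\| \geq \delta$; taking the contrapositive and using $L' \subseteq U$, any $\phi$ with $\|\phi\| < \delta$ then has $\phi(L) \cap L' \neq \varnothing$, hence a fortiori $\phi(L) \cap U \neq \varnothing$, which is exactly what is claimed.

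To construct $L'$, I would first fix a point $p \in L \cap U$ and choose a Darboux chart $\Psi \co V \to \R^{2n}$ on a neighborhood $V \subseteq U$ of $p$, with $\Psi(p) = 0$ and $\omega|_{V} = \Psi^{*}\left(\sum_{i} dx_{i} \wedge dy_{i}\right)$, arranged so that $\Psi(L \cap V)$ is an open neighborhood of $0$ in the Lagrangian subspace $\R^{n} \times \{0\}$. Such a chart is just the standard local normal form for a Lagrangian submanifold: straighten $d\Psi_{p}(T_{p}L)$ to $\R^{n} \times \{0\}$ by a linear symplectomorphism, then compose with the fiberwise translation by the exact one-form whose graph locally describes $L$. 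Shrinking $V$, I may assume that $\Psi(V)$ contains the closed polydisc $\prod_{i} \{x_{i}^{2} + y_{i}^{2} \leq r^{2}\}$ for some $r > 0$. Inside it sits the torus $T = \prod_{i=1}^{n} \{x_{i}^{2} + y_{i}^{2} = r^{2}\}$, a compact embedded Lagrangian submanifold of $(\R^{2n}, \sum_{i} dx_{i} \wedge dy_{i})$; it meets $\R^{n} \times \{0\}$ precisely in the $2^{n}$ points at which every $y_{i} = 0$ and each $x_{i} = \pm r$, and at each of these points the tangent space of $T$ equals $\{0\} \times \R^{n}$, which is complementary to $\R^{n} \times \{0\}$, so the intersection is transverse. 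I then set $L' := \Psi^{-1}(T)$: this is a compact Lagrangian submanifold contained in $V \subseteq U$, it meets $L$ (for instance at $\Psi^{-1}(r, \dots, r, 0, \dots, 0)$), and the intersection remains transverse because $\Psi$ is in particular a diffeomorphism.

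With $L'$ in hand the argument finishes as indicated in the first paragraph. I do not expect any serious obstacle: all of the analytic content is already packaged in Theorem \ref{disptvs}, and the only thing to get right is the construction of $L'$ as a genuinely \emph{closed} Lagrangian lying inside $U$ --- which is why one takes a product of small circles rather than, say, a truncated Lagrangian plane, whose cutoff would fail to be Lagrangian along its boundary.
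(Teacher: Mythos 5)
Your proof is correct and is essentially the paper's argument: both reduce to Theorem \ref{disptvs} by placing a small Lagrangian torus (a product of circles in a Darboux chart straightening $L$ to $\R^n\times\{0\}$) inside $U$, meeting $L$ transversely in $2^n$ points. The only cosmetic difference is that you take the torus inside a polydisc rather than a round ball, which if anything is slightly cleaner since the product of circles of radius $r/2$ need not actually lie in $B^{2n}(r)$ once $n\geq 4$.
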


\begin{proof} Let $B^{2n}(r)$ denote the standard symplectic ball of radius $r$ around the origin in $\C^{n}$, where $2n=\dim_{\R} M$, and let $S^1(r/2)\subset \C$ denote the circle of radius $r/2$ around the origin.  
In view of the Weinstein Neighborhood Theorem, there is $r>0$ and a Darboux chart $\psi\co V\to B^{2n}(r)$ around some point in $L$ such that $L\cap V=\psi^{-1}(\R^{n})$, where $V\subset U$ is an open subset.  Let $L'=\psi^{-1}\left((S^1(r/2))^n\right)$.  Then $L$ and $L'$ are Lagrangian submanifolds which meet each other transversely in $2^n$ points, with $L'\subset U$.  If $\phi\in Ham(M,\omega)$ has $\phi(L)\cap U=\varnothing$, then $\phi(L)\cap L'=\varnothing$, and so where $\delta$ is as in Theorem \ref{disptvs} we have $\|\phi\|\geq\delta$.
\end{proof}

\begin{cor}\label{lagopen}  Let $(M,\omega)$ be a geometrically bounded symplectic manifold, $N\subset M$ a closed subset, and $L$ a compact Lagrangian submanifold of $M$, which is contained in $N$.  Then $L\subset R_N$.
\end{cor}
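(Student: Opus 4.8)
The plan is to unwind the definition $R_N=\bigcap_{\phi\in\bar{\Sigma}_N}\phi^{-1}(N)$ and prove directly that $\phi(L)\subset N$ for every $\phi\in\bar{\Sigma}_N$. Since $L\subset N$ is given, this yields $L\subset\phi^{-1}(N)$ for every such $\phi$, and hence $L\subset R_N$. So everything reduces to the claim that $\bar{\Sigma}_N$ cannot move any point of $L$ off of $N$.

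I would argue this by contradiction. Suppose some $\phi\in\bar{\Sigma}_N$ and some $x_0\in L$ satisfy $\phi(x_0)\notin N$. As $N$ is closed, choose an open set $U\ni\phi(x_0)$ with $U\cap N=\varnothing$. Now $\phi(L)$ is a \emph{compact Lagrangian} submanifold of $M$, being the image of the compact Lagrangian $L$ under the symplectomorphism $\phi$, and $\phi(L)\cap U\neq\varnothing$ because $\phi(x_0)$ belongs to it. Applying Corollary \ref{lagcor} to the Lagrangian $\phi(L)$ and the open set $U$ produces a $\delta>0$ such that every $\psi\in Ham(M,\omega)$ with $\|\psi\|<\delta$ satisfies $\psi(\phi(L))\cap U\neq\varnothing$. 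On the other hand, since $\phi$ lies in the Hofer-closure of $\Sigma_N$ and the Hofer topology is metric, there is a sequence $g_n\in\Sigma_N$ with $\|g_n\phi^{-1}\|\to 0$. Setting $\psi_n=g_n\phi^{-1}\in Ham(M,\omega)$, we have $\|\psi_n\|=\|g_n\phi^{-1}\|\to 0$, so for all large $n$, $\|\psi_n\|<\delta$, whence $\psi_n(\phi(L))\cap U\neq\varnothing$. But $\psi_n(\phi(L))=g_n(L)\subset g_n(N)=N$ (using $L\subset N$ and $g_n\in\Sigma_N$), which is disjoint from $U$ --- a contradiction. Hence $\phi(L)\subset N$ for all $\phi\in\bar{\Sigma}_N$, as needed.

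The argument is quite short once Corollary \ref{lagcor} is available, since all of the Floer-theoretic input --- the Lagrangian form of the energy--capacity inequality in Theorem \ref{disptvs} --- is already packaged there; no transversality or genericity hypotheses enter at this stage, and $(M,\omega)$ being geometrically bounded is used only through that corollary. The one place requiring a little care, and the main (if modest) obstacle, is the bookkeeping with the bi-invariant Hofer metric: one must apply Corollary \ref{lagcor} to the \emph{moved} Lagrangian $\phi(L)$ rather than to $L$ itself, and then recognize the submanifolds $g_n(L)\subset N$ as the images $\psi_n(\phi(L))$ under Hamiltonian diffeomorphisms $\psi_n=g_n\phi^{-1}$ whose Hofer norms tend to $0$; the bi-invariance of $\|\cdot\|$ is exactly what makes $\|\psi_n\|=\|g_n\phi^{-1}\|=d(g_n,\phi)\to 0$.
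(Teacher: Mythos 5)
Your proof is correct and follows essentially the same route as the paper: both apply Corollary \ref{lagcor} to the \emph{moved} Lagrangian $\phi(L)$ (the paper calls it $\psi(L)$) and an open set in $M\setminus N$, then use the bi-invariant structure of the Hofer metric to conclude that elements near $\phi$ in $\bar{\Sigma}_N$ would have to be outside $\Sigma_N$. The only cosmetic difference is that you phrase it as a contradiction via a sequence $g_n\in\Sigma_N$, whereas the paper argues directly that a $\delta$-ball around $\psi$ misses $\Sigma_N$; the content is identical.
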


\begin{proof} We must show that for any $x\in L$ and $\psi\in Ham(M,\omega)$ such that $\psi(x)\notin N$, it holds that $\psi\notin \bar{\Sigma}_N$.  

If $x\in L$, $\psi\in Ham(M,\omega)$, and $\psi(x)\notin N$, then $\psi(L)$ intersects the open subset $M\setminus N$, and so by Corollary \ref{lagcor} there is $\delta>0$ such that whenever $\|\phi\|<\delta$ we have $\phi(\psi(L))\cap (M\setminus N)\neq\varnothing$.  In particular since $L\subset N$ we have $\phi\circ\psi\notin \Sigma_N$ whenever $\|\phi\|<\delta$.  So the $\delta$-ball around $\psi$ is disjoint from $\Sigma_N$, proving that $\psi\notin \bar{\Sigma}_N$.
\end{proof}

\begin{remark} Note that in the case that $N=L$, this gives a new proof of Chekanov's theorem \cite{Ch00} that compact Lagrangian submanifolds of geometrically bounded symplectic manifolds are CH-rigid; this proof seems to be somewhat simpler than Chekanov's original one.  Actually this proof, unlike Chekanov's, can be extended to certain noncompact Lagrangian submanifolds $L$ of completions of Liouville domains such as the conormal bundles considered in \cite{Oh1} and, more generally, the Lagrangian submanifolds considered in \cite[(3.3)]{AS}; one just needs to have a maximum principle (such as the one proven in \cite[Section 7c]{AS}) in order to obtain compactness results for solutions of (\ref{lR}) when $L'$ (but perhaps not $L$) is compact
and $H$ is compactly supported, and then the proofs of Theorem \ref{disptvs} and Corollary \ref{lagopen} go through unchanged. 
\end{remark}

\begin{cor}\label{denserigid} Where $(M,\omega)$ is a geometrically bounded symplectic manifold, let $N\subset M$ be a closed subset such that there exists a dense subset $N_0\subset N$ so that for every $x\in N_0$ there is a compact Lagrangian submanifold $L_x\subset M$  so that $x\in L_x\subset N$.  Then $N$ is CH-rigid.
\end{cor}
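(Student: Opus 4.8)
The plan is to deduce this immediately from Corollary \ref{lagopen} together with Lemma \ref{RNlemma}; essentially all of the analytic work has already been carried out in proving Theorem \ref{disptvs} and its corollaries, so the argument here is pure bookkeeping. First I would dispose of the trivial case in which $N=M$: then $\mathcal{L}(N)=\{M\}$ is a singleton, so $\delta$ is vacuously a nondegenerate metric and $N$ is CH-rigid. Hence we may assume that $N$ is a \emph{proper} closed subset of $M$, which is the standing hypothesis under which Lemma \ref{RNlemma} is stated.

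Next I would use the hypothesis on $N_0$ to populate the rigid locus. For each $x\in N_0$ we are given a compact Lagrangian submanifold $L_x\subset M$ with $x\in L_x\subset N$; since $(M,\omega)$ is geometrically bounded, Corollary \ref{lagopen} applies with this $L_x$ in the role of $L$ and yields $L_x\subset R_N$. In particular $x\in R_N$, and as this holds for every $x\in N_0$ we conclude that $N_0\subset R_N$.

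Then I would invoke Lemma \ref{RNlemma}(i), which says that $R_N$ is a closed subset of $N$. Combining this with $N_0\subset R_N$ and the density of $N_0$ in $N$, we get $N=\overline{N_0}\subset R_N$; together with the trivial inclusion $R_N\subset N$ this forces $R_N=N$. By Lemma \ref{RNlemma}(ii), the equality $R_N=N$ is equivalent to $N$ being CH-rigid, which completes the proof.

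Since the only real content is the input provided by Corollary \ref{lagopen}, there is no genuine obstacle; the only points needing any care are the reduction to the proper-subset case (so that Lemma \ref{RNlemma} may be applied as stated) and the elementary observation that a subset of $N$ which is closed in $N$ and contains a dense subset of $N$ must be all of $N$.
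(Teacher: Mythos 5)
Your proof is correct and follows essentially the same route as the paper: apply Corollary \ref{lagopen} to get $N_0\subset R_N$, use Lemma \ref{RNlemma}(i) (closedness of $R_N$) together with density of $N_0$ to conclude $R_N=N$, and then invoke Lemma \ref{RNlemma}(ii). Your preliminary reduction to the case that $N$ is a proper subset of $M$ is a small extra bit of care that the paper elides, but it does not change the substance of the argument.
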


\begin{proof} By Corollary \ref{lagopen} each of the Lagrangian submanifolds $L_x$ are contained in $R_N$, and so $N_0$ is contained in $R_N$, which is closed by Lemma \ref{RNlemma}(i). Thus $R_N=N$ since $N_0$ is dense in $N$, and so it follows from Lemma \ref{RNlemma}(ii) that $N$ is CH-rigid.
\end{proof}

\begin{remark} Note that if $N\subset M$ is a submanifold satisfying the hypothesis of Corollary \ref{denserigid} it is clear (independently of our other results) that $N$ is coisotropic: indeed for any $x\in N_0$ we have $T_xL_{x}\leq T_xN$ and so using that $L_x$ is Lagrangian we get a chain of inclusions $T_xN^{\omega}\leq T_x L_{x}^{\omega}=T_x L_x\leq T_xN$.  So $T_xN^{\omega}\leq T_xN$ throughout a dense subset of $N$, and so indeed throughout all of $N$.  Of course this is consistent with the conclusion of Corollary \ref{denserigid} together with Corollary \ref{ncdeg}.  This argument also shows that for any $x\in N_0$ the Lagrangian submanifold  $L_x$ must contain the entire leaf of the characteristic foliation through $x$.
\end{remark}

\begin{ex}\label{ellipsoid}
For a tuple $\vec{a}=(a_1,\ldots,a_n)\in (0,\infty)^{n}$ let \[ E_{\vec{a}}=\left\{(x_1,\ldots,x_{2n})\in\R^{2n}\left| \sum_{i=1}^{n}\left(\frac{x_{i}^{2}+x_{n+i}^{2}}{a_i}\right)=1\right.\right\}\] (thus $E_{\vec{a}}$ is the boundary of the standard symplectic ellipsoid having cross-sections of capacity $\pi a_i$).  Of course $E_{\vec{a}}$ is CH-rigid by Theorem \ref{hypthm} simply by virtue of being a codimension-one submanifold. Considering instead products $E_{\vec{a}^{(1)}}\times\cdots\times  E_{\vec{a}^{(k)}}\subset \R^{2n_1+\cdots+2n_k}$ for $\vec{a}^{(j)}\in (0,\infty)^{n_j}$, we claim that it continues to hold that
$E_{\vec{a}^{(1)}}\times\cdots\times  E_{\vec{a}^{(k)}}$ is always CH-rigid.

Indeed, for any given $\vec{a}\in(0,\infty)^{n}$, a dense subset of $E_{\vec{a}}$ is foliated by compact Lagrangian submanifolds: for any positive numbers $b_1,\ldots,b_n$ such that $\sum_{i=1}^{n}\frac{b_i}{a_i}=1$,  the submanifold \[ L_{\vec{b}}=\left\{(x_1,\ldots,x_{2n})\in\R^{2n}\left| (\forall i) (x_{i}^{2}+x_{n+i}^{2}=b_i)\right.\right\}\] is a Lagrangian torus in $\R^{2n}$ which is contained in $E_{\vec{a}}$, and any point in the dense subset of $E_{\vec{a}}$ consisting of $(x_1,\ldots,x_{2n})$ such that every $x_{i}^{2}+x_{n+i}^{2}$ is nonzero will belong to one of the $L_{\vec{b}}$.  Taking products $L_{\vec{b}^{(1)}}\times\cdots\times L_{\vec{b}^{(k)}}$ gives a foliation of a dense subset of $E_{\vec{a}^{(1)}}\times\cdots\times  E_{\vec{a}^{(k)}}$ by Lagrangian tori, and so Corollary \ref{denserigid} shows that $E_{\vec{a}^{(1)}}\times\cdots\times  E_{\vec{a}^{(k)}}$ is CH-rigid.
\end{ex}

\subsection{The instability of small rigid loci}\label{unstable}

It follows from Lemma \ref{rnp} that if $N$ is a submanifold of $(M,\omega)$ (which we will implicitly assume to have dimension greater than $\frac{1}{2}\dim M$) then the rigid locus $R_N$ cannot be contained in a submanifold of $N$ of dimension less than $\dim M-\dim N$, unless of course $N$ is weightless so that $R_N$ is empty.  On the other hand (assuming that $(M,\omega)$ is geometrically bounded) if there is a compact Lagrangian submanifold $L$ of $M$ contained in $N$ such that at every point $x\in N\setminus L$ we have $T_{x}N^{\omega}\not\subset T_xN$, then we will have $R_N=L$.  It is not clear at this point whether a nonempty $R_N$ can ever be contained in a submanifold of dimension less than $\frac{1}{2}\dim M$; however what we will do presently shows that, if this ever happens for a compact $N$, then it is an ``unstable'' phenomenon, in that it disappears under taking a product with $S^1\subset\R^2$.

We adopt some notation relating to such stabilizations.  If $(M,\omega)$ is a symplectic manifold and $N\subset M$ is any subset, consider the symplectic manifold $\left(\R^2\times M,\Omega=(dx\wedge dy)\oplus \omega\right)$, and define \[ \hat{N}=S^1\times N\subset \R^2\times M.\]

Recall that if $N\subset M$ is any closed subset the \emph{displacement energy} of $N$ in $M$ is \[ e(N,M)=\inf\{\phi\in Ham(M,\omega)|\phi(N)\cap N=\varnothing\}.\]

\begin{theorem}\label{stable}  Suppose that $N\subset M$ is a compact subset with the property that $e(R_N,M)=0$.  Then the subset $\hat{N}=S^1\times N\subset \R^2\times M$ is weightless.
\end{theorem}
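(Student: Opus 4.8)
The plan is to reduce the statement to showing that the rigid locus $R_{\hat{N}}$ of $\hat{N}=S^1\times N$ in $(\R^2\times M,\Omega)$ is empty, and then to produce the emptiness using $e(R_N,M)=0$ together with a ``turning'' construction in the $\R^2$-factor. First I would note that, since $\hat{N}$ is compact and no connected component of $\R^2\times M$ is (every component contains an $\R^2$-factor), Lemma \ref{RNlemma}(iii) tells us that $\hat{N}$ is weightless as soon as $R_{\hat{N}}=\varnothing$; and by Lemma \ref{RNlemma}(v) the latter follows if we can exhibit a \emph{single} $\hat{N}'\in\mathcal{L}(\hat{N})$ with $\hat{N}'\cap\hat{N}=\varnothing$ and $\delta(\hat{N},\hat{N}')=0$. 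Before attacking that, I would record two preliminaries. (a) If $\phi\in\bar{\Sigma}_N$ is generated by $K\co[0,1]\times M\to\R$, then its ``cutoff lift'' $\tilde\phi$ generated by $\beta(z)K(t,m)$ — where $\beta\co\R^2\to[0,1]$ equals $1$ on a disk containing $S^1$ — restricts on that disk to $(z,m)\mapsto(z,\phi(m))$, whence $\tilde\phi(\hat{N})=S^1\times\phi(N)$, and the estimate $\max(\beta K)-\min(\beta K)\le\max K-\min K$ gives $\|\tilde\phi\|\le\|\phi\|$; applying this to approximants of $\phi$ in $\Sigma_N$ shows $\delta(\hat{N},\tilde\phi(\hat{N}))=0$, so by Proposition \ref{stabclosure} $\tilde\phi\in\bar{\Sigma}_{\hat{N}}$. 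Taking $\phi(x)\notin N$ then yields $R_{\hat{N}}\subset S^1\times R_N$. (b) Running the fragmentation argument in the proof of Lemma \ref{RNlemma}(iii) with the open cover of $M\setminus R_N$ by $M\setminus N$ together with sets $U_y$ ($y\in N\setminus R_N$) on which $Ham^c(U_y)\le\bar{\Sigma}_N$, one gets $Ham^c(M\setminus R_N)\le\bar{\Sigma}_N$; that is, the part of $N$ away from $R_N$ can be moved ``for free.''

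Now the heart of the matter. Given $\epsilon>0$, use $e(R_N,M)=0$ to pick $\psi_\epsilon\in Ham^c(M)$ with $\psi_\epsilon(R_N)\cap R_N=\varnothing$ and Hofer norm $<\epsilon$, realized by an isotopy $\{\psi_\epsilon^t\}$ of small total energy; shrinking an open set $W\supset R_N$ we may assume $\psi_\epsilon(W)\cap W=\varnothing$, and by (b) the fibers of $\hat{N}$ lying over $N\setminus W$ may be moved at no cost. I would then build a Hamiltonian on $\R^2\times M$ in an annular neighborhood of $S^1$, using action-angle coordinates $(a,\phi)$ with $S^1=\{a=0\}$: the angular coordinate $\phi$ is made to parametrize the time variable of the displacing isotopy of $R_N$ (so that different angular arcs ``see'' different times $\psi_\epsilon^t$), while a profile function of $a$ supplies the rotation; the point of letting only $R_N$ — not all of $N$ — need displacing, and of (b), is that the construction can be arranged so that it closes up around $S^1$ into a \emph{single} diffeomorphism $\Phi_\epsilon$ of Hofer norm $O(\epsilon+\text{annular width})$ which pushes $S^1\times R_N$ off the locus $\{a=0\}$ (hence off $\hat{N}$) and carries $\hat{N}$ to one fixed subset $\hat{N}'$ disjoint from $\hat{N}$, independent of $\epsilon$. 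The main obstacle is precisely this closing-up step: the displacing isotopy of $R_N$ is not a loop in $Ham$, so merely ``spinning it around the circle'' returns to the identity and accomplishes nothing; the resolution is to exploit the freedom from (b) to move $N\setminus W$ gratis and the room created by $\psi_\epsilon(W)\cap W=\varnothing$ to reroute the $a$-coordinate, and bookkeeping the Hofer norm of the resulting (necessarily somewhat intricate) Hamiltonian — again via the ``cutoff does not increase oscillation'' estimate — is where the care lies.

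Finally, with such a fixed $\hat{N}'$ in hand, $\delta(\hat{N},\hat{N}')=0$ and $\hat{N}\cap\hat{N}'=\varnothing$, so Lemma \ref{RNlemma}(v) gives $R_{\hat{N}}\subset\hat{N}\cap\hat{N}'=\varnothing$, and Lemma \ref{RNlemma}(iii) then gives that $\hat{N}$ is weightless. (Alternatively, one can phrase the last two steps using (a): since each $\Phi_\epsilon\in\bar{\Sigma}_{\hat{N}}$ and $R_{\hat{N}}\subset S^1\times R_N$, while $\Phi_\epsilon$ displaces $S^1\times R_N$ off $\hat{N}$ yet fixes $R_{\hat{N}}$ setwise by Lemma \ref{RNlemma}(iv), the set $R_{\hat{N}}$ must be empty.)
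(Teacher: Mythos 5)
Your framing is correct and matches the paper's: reduce to $R_{\hat N}=\varnothing$ via Lemma \ref{RNlemma}(iii) and (v), and establish the preliminary facts $R_{\hat N}\subset S^1\times R_N$ (via cutoff lifts) and $Ham^c\bigl((\R^2\times M)\setminus(S^1\times R_N)\bigr)\leq\bar\Sigma_{\hat N}$ (via fragmentation). Both of these do appear, essentially verbatim, inside the paper's argument. But the proposal stops exactly where the real work begins, and the one step you describe as ``the heart of the matter'' contains the entire nontrivial content of the theorem and is not actually carried out.

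Two concrete problems. First, the action-angle construction you sketch --- letting the angular variable on a small annulus around $S^1$ parametrize the time of the displacing isotopy $\psi_\ep^t$ --- runs directly into the closing-up obstruction you yourself flag: $\{\psi_\ep^t\}$ is not a loop in $Ham(M)$, so ``spinning it around the circle'' does not define anything. You assert that (b) and the condition $\psi_\ep(W)\cap W=\varnothing$ let you ``reroute the $a$-coordinate,'' but this is not a construction; no Hamiltonian is written down, no Hofer estimate is given, and it is precisely here that a naive attempt breaks. The paper resolves this not with an action-angle construction at all but with a \emph{commutator}: in Lemma \ref{betalemma} one takes $\phi=\psi^{-1}\circ\tilde\eta^{-1}\circ\psi\circ\tilde\eta$, where $\tilde\eta$ is a cutoff lift of a small-energy displacement of $R_N$ and $\psi$ is a shear (of arbitrarily large norm) modulated by a bump function $\gamma$ with $\gamma|_W=0$ and $\gamma\equiv 1$ on $\overline{\eta(W)}$. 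The commutator trick gives $\|\phi\|\leq 2\|\tilde\eta\|<\ep$ regardless of the size of $\psi$, and the design of $\gamma$ forces $\phi$ to translate by $+3$ in $x$ over all of $W$. This is the mechanism by which small displacement energy of $R_N$ is converted into a cheap rigid translation near $S^1\times R_N$, and it is absent from your proposal.

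Second, and more structurally, you claim that the construction can be arranged so that the image $\hat N'=\Phi_\ep(\hat N)$ is a \emph{single fixed} set, independent of $\ep$. That claim is exactly what you would need in order to invoke Lemma \ref{RNlemma}(v) directly, and there is no reason given that it holds; indeed the paper does \emph{not} produce a fixed target. Instead it produces a family $\zeta_n$ with $\|\zeta_n\|<1/n$ and $\zeta_n(\hat N)\subset\{x\geq 2\}$ (Lemma \ref{zetalemma}), where the sets $\zeta_n(\hat N)$ genuinely vary with $n$, and then runs a separate translation-invariance argument (Claim \ref{clm}): one conjugates by a shear carrying the far-away portion of the picture off to $x\gg 1$, and uses that $\zeta_{n_2}\circ\zeta_{n_1}^{-1}$ has compact support, to show that $\delta(\hat N,\zeta_n(\hat N))$ is independent of $n$ even though the sets themselves are not. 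Only then does $\|\zeta_n\|<1/n$ force $\delta(\hat N,\zeta_n(\hat N))=0$ and hence $R_{\hat N}\subset\hat N\cap\zeta_n(\hat N)=\varnothing$. Your proposal asserts a stronger ``single $\hat N'$'' conclusion without a construction supporting it, and thereby bypasses (rather than solves) the difficulty that Claim \ref{clm} is there to address. Until both the commutator-type construction and the translation-invariance step (or a genuine substitute for each) are supplied, this is a gap, not a proof.
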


\begin{proof} We begin with a lemma:

\begin{lemma}\label{betalemma}  For any $\ep>0$ and $R>0$  there is $\phi\in Ham(\R^2\times M,\Omega)$ such that: \begin{itemize}
\item $\|\phi\|<\ep$
\item For any $(x,y,m)\in \R^2\times M$ with $|x|+|y|\leq R$ the first coordinate of $\phi(x,y,m)$, denoted $x'$, has $x-3\leq x'\leq x+3$, and the second coordinate of $\phi(x,y,m)$ is equal to $y$. 
\item There is a neighborhood $W$ of $R_N$ in $M$ such that if $m\in W$ and $|x|+|y|\leq R$ then $\phi(x,y,m)$ has its first coordinate $x'$ equal to $x+3$.  Moreover $\bar{W}$ is compact.
\end{itemize}
\end{lemma}

\begin{proof}[Proof of Lemma \ref{betalemma}] First let $\eta\in Ham(M,\omega)$ be such that $\|\eta\|<\frac{\ep}{2}$ and $\eta(R_N)\cap R_N=\varnothing$, as we can do by the assumption in the theorem that $e(R_N,M)=0$.  Now let $\gamma\co M\to [0,1]$ be a compactly  supported smooth function such that for some neighborhood $W$ of $R_N$ with compact closure it holds that $\gamma|_{W}=0$ and $\gamma^{-1}(\{1\})=\overline{\eta(W)}$.  Let $H\co \R^2\times M\to \R$ be a smooth function such that $H(x,y,m)=-3y\gamma(m)$ whenever $|x|+|y|\leq R+3$ and let $\psi\co \R^2\times M\to\R^2\times M$ be the time-one map of $H$.  Finally, choose $\tilde{\eta}\in Ham(\R^2\times M,\Omega)$ such that $\|\tilde{\eta}\|\leq \|\eta\|$ and for all $(x,y,m)\in \R^2\times M$ with $|x|+|y|<R+3$ we have $\tilde{\eta}(x,y,m)=(x,y,\eta(m))$.  Such a $\tilde{\eta}$ can easily be constructed as the time-one map of a Hamiltonian obtained from the Hamiltonian generating $\eta$ by pulling back via the projection and then multiplying by a suitable cutoff function.

Our map $\phi\in Ham(\R^2\times M,\Omega)$ will be given by the formula \[ \phi=  \psi^{-1}\circ \tilde{\eta}^{-1}\circ \psi \circ\tilde{\eta}.\]  By the triangle inequality and the invariance of the Hofer norm under conjugation and inversion we see that $\|\phi\|\leq 2\|\tilde{\eta}\|$, and by assumption $\|\tilde{\eta}\|\leq \|\eta\|<\frac{\ep}{2}$; thus $\|\phi\|<\ep$.

 Now the Hamiltonian vector field of $H$ is given within $\{|x|+|y|\leq R+3\}\times M$ by $3\gamma(m)\frac{\partial}{\partial x}-3yZ_{\gamma}$, where $Z_{\gamma}$ is the Hamiltonian vector field of $\gamma$ on $M$, trivially pushed forward to $\R^2\times M$.  So (at least for $|x|+|y|\leq R$) $\psi$ does not change the $y$ coordinate and (since $0\leq\gamma\leq 1$) changes the $x$ coordinate by an amount between $0$ and $3$.  Since $\tilde{\eta}$ does not affect the $\R^2$ factor within $\{|x|+|y|\leq R+3\}\times M$, the second statement of the lemma follows directly.
 
   For the third statement, let $(x,y,m)\in \R^2\times W$ with $|x|+|y|\leq R$, where $W$ is as in the first paragraph of the proof.  Then $\gamma(\eta(m))=1$, and so (using that $d\gamma(Z_{\gamma})=0$) where $(x_1,y_1,m_1)=\psi\circ\tilde{\eta}(x,y,m)=\psi(x,y,\eta(m))$ we will have $x_1=x+3$, $y_1=y$, and $\gamma(m_1)=\gamma(\eta(m))=1$.  So since $\tilde{\eta}(x,y,m)=(x,y,\eta(m))$ for $|x|+|y|<R+3$ we have $\tilde{\eta}^{-1}(x_1,y_1,m_1)=(x+3,y,\eta^{-1}(m_1))$.  Now by our construction of $\gamma$ and $W$ the fact that $\gamma(m_1)=1$ implies that $\gamma^{-1}(m_1)\in\bar{W}$ and hence that $\gamma(\eta^{-1}(m_1))=0$, and so the first coordinate of  $\psi^{-1}(x+3,y,\eta^{-1}(m_1))$ will be $x+3$.   
\end{proof}

Lemma \ref{betalemma} has the following consequence.

\begin{lemma}\label{zetalemma} Again assuming that $N\subset M$ is compact and $e(R_N,M)=0$, for any positive integer $n$ there is an element $\zeta_n\in Ham(\R^2\times M,\Omega)$ such that $\|\zeta_n\|<1/n$ and $\zeta_n(\hat{N})\subset \{(x,y,m)\in \R^2\times M|x\geq 2\}$.
\end{lemma}

\begin{proof}[Proof of Lemma \ref{zetalemma}] Fix $n$ and some  number $R>15$ and apply Lemma \ref{betalemma} with $\ep=\frac{1}{2n}$ to obtain an element $\phi\in Ham(\R^2\times M,\Omega)$ and an open set $W$ satisfying the indicated properties.  Choose a neighborhood $V$ of $\bar{W}$ and a smooth compactly supported function $\alpha\co M\to [0,1]$ such that $\alpha^{-1}(0)$ has interior which contains $R_N$, and $W=V\cap\alpha^{-1}\left([0,1)\right)$. Note that this implies, via an easy connectedness argument, that any path in $\alpha^{-1}\left([0,1)\right)$ which begins in $W$ also ends in $W$.   Let $H\co \R^2\times M\to \R$ be a compactly supported smooth function with $H(x,y,m)=-6y\alpha(m)$ wherever $|x|+|y|<R+6$.  Let $\psi$ be the time-one map of $H$.  Then $\psi$ obeys the following properties: \begin{itemize}
\item[(i)] $\psi\in Ham^c\left((\R^2\times M)\setminus (S^1\times R_N)\right)$ (\emph{i.e.}, $\psi$ is generated by a Hamiltonian with compact support in $(\R^2\times M)\setminus (S^1\times R_N)$).
\item[(ii)] For any $(x,y,m)\in S^1\times N$ the first coordinate of $\phi\circ\psi(x,y,m)$ is at least equal to $x+3$.
\end{itemize}

Indeed, (i) is obvious, while for (ii), the first coordinate of $\phi(\psi(x,y,m))$ will be no smaller than $3$ less than that of $\psi(x,y,m)$. (Here we use the second item in Lemma \ref{betalemma}, which is easily seen to apply to the point $\psi(x,y,m)$ by the definition of $\psi$ and the facts that $(x,y)\in S^1$ and $R>15$.) The first coordinate of $\psi(x,y,m)$ will be equal to $x+6$ unless $\alpha(m)<1$, and will be at least equal to $x$ in any event. Now if $\alpha(m)<1$, then $m\in W$ (as we are assuming $(x,y,m)\in S^1\times N$). Moreover since $\alpha$ is constant along the Hamiltonian flow of $H$, 
writing $\psi(x,y,m)=(x',y',m')$ we will have  $m'\in W$ (using our earlier remark that a path in $\alpha^{-1}\left([0,1)\right)$ which begins in $W$ also ends in $W$), and so the first coordinate of $\phi(x',y',m')=\phi\circ\psi(x,y,m)$ will be equal to $x'+3\geq x+3$ by the last property in Lemma \ref{betalemma}.  So in any case (ii) will hold.  

We claim that (by virtue of (i) above) $\psi\in \bar{\Sigma}_{\hat{N}}$.  First of all note that $R_{\hat{N}}\subset S^1\times R_N$.  Indeed if $(x,y,m)\in \hat{N}=S^1\times N$ with $m\in N\setminus R_N$, so that there is $g\in \bar{\Sigma}_N$ with $g(m)\notin N$, then it is easy to find an element of $\bar{\Sigma}_{\hat{N}}$ which restricts to a neighborhood of $S^1\times N$ as $(id_{\R^2}\times g)$ and hence moves $(x,y,m)$ off of $\hat{N}$, proving that $(x,y,m)\notin R_{\hat{N}}$.  Now just as in the proof of Lemma \ref{RNlemma}(iii), if $p\in (\R^2\times M)\setminus (S^1\times R_N)$, so that in particular $p\notin R_{\hat{N}}$, then we can find a neighborhood $V_p$ of $p$ in  $(\R^2\times M)\setminus (S^1\times R_N)$ and an element $g_p\in \bar{\Sigma}_{\hat{N}}$ so that $g_p(V_p)\cap \hat{N}=\varnothing$.  So if $\eta\in Ham^c(V_p)$ then $g_p\circ\eta\circ g_{p}^{-1}\in \Sigma_{\hat{N}}$, implying that $\eta\in \bar{\Sigma}_{\hat{N}}$ since $\bar{\Sigma}_{\hat{N}}$ is a group containing both $\Sigma_{\hat{N}}$ and $g_p$.  Thus $(\R^2\times M)\setminus (S^1\times R_N)$ is covered by open sets $V_p$ with $Ham^c(V_p)\leq \bar{\Sigma}_{\hat{N}}$, which by Banyaga's fragmentation lemma implies that $Ham^c\left((\R^2\times M)\setminus (S^1\times R_N)\right)\leq \bar{\Sigma}_{\hat{N}}$.  So by (i) we indeed have 
$\psi\in \bar{\Sigma}_{\hat{N}}$.  

Accordingly we can choose $\xi\in \Sigma_{\hat{N}}$ so that $\|\psi\circ\xi^{-1}\|<\frac{1}{2n}$.    Set $\zeta_n=\phi\circ\psi\circ\xi^{-1}$.  Since $\xi$ belongs to the stabilizer $\Sigma_{\hat{N}}$ we have $\zeta_n(\hat{N})=\phi\circ\psi(\hat{N})$, which is contained in $\{x\geq 2\}$ by (ii).  Moreover $\|\zeta_n\|\leq \|\phi\|+\|\psi\circ\xi^{-1}\|<\frac{1}{2n}+\frac{1}{2n}$, as desired.
\end{proof}

We now complete the proof of Theorem \ref{stable}.  \begin{claim}\label{clm} $\delta(\hat{N},\zeta_n(\hat{N}))$ is independent of $n$. \end{claim}  \begin{proof}[Proof of Claim \ref{clm}] For any $T>0$ let $\rho_T$ denote the translation $(x,y,m)\mapsto (x+T,y,m)$.  Given positive integers $n_1,n_2$, we know that for $i=1,2$, $\zeta_{n_i}(\hat{N})$ is a compact submanifold of $\R^2\times M$ contained in $[2,\infty)\times \R\times N$, so choose a compact subset $K\subset M$ and a number $A\gg 1$ so that $\zeta_{n_i}(\hat{N})\subset [2,A]\times [-A,A]\times int(K)$ for $i=1,2$.  Let $H\co \R^2\times M\to\R$ be a smooth function whose support is compact and contained in $[1.5,\infty)\times\R\times M$, such that the restriction of $H$ to $[2,A]\times [-A,A+T]\times K$ 
coincides with the function $(x,y,m)\mapsto -Ty$.  Then the time-one map $\phi_{H}^{1}$ will obey $\phi_{H}^{1}(\hat{N})=\hat{N}$ while, for $i=1,2$, $\phi_{H}^{1}(\zeta_{n_i}(\hat{N}))=\rho_T(\zeta_{n_i}(\hat{N}))$.  Consequently we have, for $i=1,2$ and any $T>0$, \begin{equation}\label{transdelta} \delta(\hat{N},\zeta_{n_i}(\hat{N}))=\delta\left( \phi_{H}^{1}(\hat{N}), \phi_{H}^{1}(\zeta_{n_i}(\hat{N}))\right)=\delta\left(\hat{N},\rho_T(\zeta_{n_i}(\hat{N}))\right).\end{equation}

Now we have \[ \left(\rho_T\circ(\zeta_{n_2}\circ\zeta_{n_1}^{-1})\circ\rho_{T}^{-1}\right)(\rho_T(\zeta_{n_1}(\hat{N})))=\rho_T(\zeta_{n_2}(\hat{N})).\]  The Hamiltonian diffeomorphism $\zeta_{n_2}\circ\zeta_{n_1}^{-1}$ is compactly supported; denoting the support of $\zeta_{n_2}\circ\zeta_{n_1}^{-1}$ by $L$, the support of $\rho_T\circ(\zeta_{n_2}\circ\zeta_{n_1}^{-1})\circ\rho_{T}^{-1}$ will be $\rho_{T}(L)$, which is disjoint from $\hat{N}$ if $T$ is sufficiently large.  Hence the invariance of $\delta(\cdot,\cdot)$ under simultaneous action on both entries by the symplectomorphism
$\rho_T\circ(\zeta_{n_2}\circ\zeta_{n_1}^{-1})\circ\rho_{T}^{-1}$ gives, for $T\gg 1$, \[ \delta(\hat{N},\rho_{T}(\zeta_{n_1}(\hat{N})))=
\delta(\hat{N},\rho_{T}(\zeta_{n_2}(\hat{N}))).\]  Combining this with (\ref{transdelta}) evidently gives \[ \delta(\hat{N},\zeta_{n_1}(\hat{N}))=\delta(\hat{N},\zeta_{n_2}(\hat{N})),\] confirming Claim \ref{clm}\end{proof}

Now recalling from Lemma \ref{zetalemma} that $\|\zeta_n\|<1/n$, we evidently have $\delta(\hat{N},\zeta_n(\hat{N}))<1/n$, so the fact that 
$\delta(\hat{N},\zeta_n(\hat{N}))$ is independent of $n$ forces us to have $\delta(\hat{N},\zeta_n(\hat{N}))=0$ for all $n$.    So by 
Lemma \ref{RNlemma}(v), $R_{\hat{N}}\subset \hat{N}\cap \zeta_n(\hat{N})$.  But of course  $\hat{N}\cap \zeta_n(\hat{N})=\varnothing$, so $R_{\hat{N}}=\varnothing$; by Lemma \ref{RNlemma}(iii) this completes the proof of Theorem \ref{stable}.
\end{proof}

\section{Coisotropic submanifolds}\label{coisosect}

We now use the foregoing results to prove CH-rigidity for various classes of coisotropic submanifolds in geometrically bounded symplectic manifolds; in particular this will yield Theorem \ref{coisomain}.  Of course, hypersurfaces are coisotropic, as are Lagrangian submanifolds, so Theorem \ref{hypthm} and \cite{Ch00} already address two significant classes.  If $N$ is coisotropic, we have a distribution $TN^{\omega}$ on $N$ of rank $\dim M-\dim N$; recall that the fact that $\omega$ is closed implies that this distribution is integrable and so generates a foliation of $N$ (the ``characteristic foliation'').

The coisotropic submanifold $N$ of $(M,\omega)$ is called \emph{regular} if the sense that the characteristic foliation of $N$ is given by the fibers of a submersion. As noted in \cite[Lemma 24]{Zi}, the regularity of $N$ is equivalent to the statement that the leaf relation \[ R=\{(x,y)\in N\times N| x\mbox{ and }y \mbox{ are on the same leaf of the characteristic foliation}\}\] is a submanifold which is closed as a subset of $N\times N$.

Corollary \ref{denserigid} leads to the conclusion that a variety of coisotropic submanifolds, including regular ones, are CH-rigid:

\begin{theorem}\label{regrigid} Suppose that a coisotropic submanifold $N$ of the geometrically bounded symplectic manifold $(M,\omega)$ is regular (with the characteristic foliation having compact leaves), or else is given by $N=J^{-1}(\eta)$ for an equivariant moment map $J\co M\to \mathfrak{g}^{*}$ associated to a Hamiltonian action of a compact Lie group $G$ on $M$, where $\eta\in\mathfrak{g}^{*}$ is fixed by the coadjoint action of $G$ on $\mathfrak{g}^{*}$ and is a regular value for $J$.  Then $N$ is CH-rigid.
\end{theorem}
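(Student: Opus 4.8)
The plan is to deduce the theorem from Corollary \ref{denserigid}: in each of the two cases I will produce a dense subset $N_0\subset N$ such that every point of $N_0$ lies on a compact Lagrangian submanifold of $M$ contained in $N$. The engine in both cases is symplectic (coisotropic) reduction, which turns a compact Lagrangian $\Lambda$ in the reduced space into a compact Lagrangian $\pi^{-1}(\Lambda)$ sitting inside $N$.

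First I would set up the reduction uniformly. In the regular case, the reformulation recalled just above together with Godement's criterion shows that the leaf space $B:=N/\!\sim$ of the characteristic foliation is a smooth manifold and that $\pi\co N\to B$ is a surjective submersion; since the leaves are compact, $\pi$ is in fact a locally trivial fibre bundle with compact fibre (the classical fibration theorem for regular foliations with compact leaves, cf.\ \cite{Zi}), and $\iota^*\omega$ (where $\iota\co N\hookrightarrow M$) is the pullback under $\pi$ of a symplectic form $\omega_B$ on $B$. In the moment-map case, the assumption that $\eta$ is fixed by the coadjoint action makes $N=J^{-1}(\eta)$ a $G$-invariant submanifold, while the assumption that $\eta$ is a regular value forces, for every $x\in N$, the linear map $\xi\mapsto \xi_M(x)$ from $\mathfrak{g}$ to $T_xM$ to be injective (its kernel is the annihilator of $\mathrm{Im}\,dJ_x=\mathfrak{g}^*$), because $\omega_x(\xi_M(x),v)=\langle dJ_x(v),\xi\rangle$. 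Hence $G$ acts on $N$ with finite stabilizers, the $G$-orbits are compact and have dimension $\dim\mathfrak{g}=\mathrm{codim}\,N$, so they coincide with the leaves of the characteristic foliation; in particular $N$ is coisotropic. Passing to the principal-orbit-type stratum $N^{\mathrm{pr}}\subset N$ (open and dense in $N$ by the principal orbit theorem), the orbit map $\pi\co N^{\mathrm{pr}}\to B:=N^{\mathrm{pr}}/G$ is, by the slice theorem, a locally trivial bundle with compact fibre $G/H$ ($H$ the finite principal isotropy, so $\dim G/H=\mathrm{codim}\,N$) onto a symplectic manifold $(B,\omega_B)$ with $\pi^*\omega_B=\iota^*\omega|_{N^{\mathrm{pr}}}$. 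In either case I now have an open dense subset $N^{\circ}\subseteq N$ (namely $N$ itself, resp.\ $N^{\mathrm{pr}}$), a symplectic manifold $(B,\omega_B)$, and a proper surjective submersion $\pi\co N^{\circ}\to B$ with $\pi^*\omega_B=\iota^*\omega|_{N^{\circ}}$.

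Next I would invoke the elementary fact that the set $B_0\subseteq B$ of points lying on a compact Lagrangian submanifold of $B$ is dense: given $b\in B$, take a Darboux chart about $b$, move $b$ slightly to a point all of whose complex coordinates are nonzero, and pass through it the corresponding product-of-circles Lagrangian torus, which is compact and contained in the chart (exactly as in the proof of Corollary \ref{lagcor}). For $b\in B_0$ and a compact Lagrangian $\Lambda\ni b$ in $B$, the preimage $L:=\pi^{-1}(\Lambda)$ is a submanifold of $N^{\circ}\subseteq N$; it is compact because $\pi$ is proper, and it is Lagrangian, since $\dim L=\dim\Lambda+(\dim N-\dim B)=\tfrac12\dim M$ and, for $u,v\in T_xL$, $\omega_x(u,v)=(\omega_B)_{\pi(x)}(d\pi_x u,d\pi_x v)=0$ because $d\pi_x u,d\pi_x v\in T_{\pi(x)}\Lambda$ and $\Lambda$ is Lagrangian. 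Setting $N_0:=\pi^{-1}(B_0)$, which is dense in $N^{\circ}$ (hence in $N$) because $\pi$ is open and surjective, every point of $N_0$ lies on a compact Lagrangian submanifold contained in $N$, so Corollary \ref{denserigid} yields that $N$ is CH-rigid. The degenerate case $\dim B=0$ is permitted and simply reproduces the statement that a compact Lagrangian submanifold is CH-rigid.

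I expect the main obstacle to be the moment-map case: one must check carefully that $J^{-1}(\eta)$ is coisotropic with characteristic foliation exactly the $G$-orbit foliation—this is precisely where both hypotheses on $\eta$ are used—and that restriction to the principal stratum still gives an honest fibre bundle over a symplectic manifold with fibres of the correct dimension, which rests on the stabilizers being finite. Working on $N^{\mathrm{pr}}$ rather than on $N$ is harmless because Corollary \ref{denserigid} only requires the compact Lagrangians to meet a dense subset of $N$, not to cover all of it, even though $N^{\mathrm{pr}}$ is merely open (not closed) in $N$. By comparison, in the regular case the only nontrivial input is the classical fact that a regular foliation with compact leaves is a fibre bundle over its (manifold) leaf space, and the routine verification that $\iota^*\omega$ descends to a symplectic form on that leaf space.
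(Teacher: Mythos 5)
Your proposal is correct and follows essentially the same route as the paper: in both cases you construct the reduced symplectic space (leaf space in the regular case, principal-orbit-type quotient in the moment-map case), pull back compact Lagrangian tori from Darboux charts in the reduced space along the projection to obtain compact Lagrangians filling a dense subset of $N$, and invoke Corollary~\ref{denserigid}. The paper packages the pullback/dimension count into its Lemma~\ref{prelag} and cites \cite{MS}, \cite{MMOPR}, \cite{SL} for the reduction facts, while you re-derive somewhat more of the underlying reduction theory (local freeness, coincidence of $G$-orbits with characteristic leaves, properness of the bundle projection), but the core argument is identical.
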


\begin{proof} The basic observation is the following:

\begin{lemma}\label{prelag}  Let $(M_0,\omega)$ be a symplectic manifold and let $N_0\subset M_0$ be a coisotropic submanifold such that for some symplectic manifold $(Z,\sigma)$ there is a proper surjective submersion $\pi\co N_0\to Z$ such that where $i\co N_0\to M_0$ is the inclusion we have $i^*\omega=\pi^*\sigma$.  Then for every $x\in N_0$ there is a compact Lagrangian submanifold $L_x$ of $M_0$ so that $x\in L_x\subset N_0$.
\end{lemma}

\begin{proof}[Proof of Lemma \ref{prelag}]  To construct $L_x$, let $\Lambda_x\subset Z$ be a Lagrangian torus containing $\pi(x)$ and contained in a  Darboux chart around $\pi(x)$.  Then set $L_x=\pi^{-1}(\Lambda_x)$.  Then $L_x$ is a compact submanifold of $N_0$ (since $\pi$ is a proper submersion), and clearly $x\in L_x$, so we need only check that $L_x$ is Lagrangian.  If $y\in L_x$ and $v,w\in T_yL_x$ then since $i^*\omega=\pi^*\sigma$ we have $\omega(v,w)=\sigma(\pi_*v,\pi_*w)=0$ since $\pi_*v$ and $\pi_*w$ are both tangent to the Lagrangian submanifold $\Lambda_x$ of $Z$.  So it only remains to show that $\dim L_x=\frac{1}{2}\dim M_0$.  

To see this, note that the fact that $N_0$ is coisotropic together with the fact that $i^*\omega=\pi^*\sigma$ where $\sigma$ is nondegenerate implies that for all $x\in N_0$ we have $T_{x}N_{0}^{\omega}=\ker(\pi_*)_x$.  Equating the dimensions of these two vector spaces shows that $\dim M_0-\dim N_0=\dim N_0-\dim Z$, \emph{i.e.}, $\dim Z=2\dim N_0-\dim M_0$.  So since $\Lambda_x\subset Z$ is Lagrangian, \begin{align*} \dim L_x&=\dim \Lambda_x+(\dim N_0-\dim Z)=\frac{1}{2}\dim Z+(\dim M_0-\dim N_0)\\&=\left(\dim N_0-\frac{1}{2}\dim M_0\right)+\left(\dim M_0-\dim N_0\right)=\frac{1}{2}\dim M_0.\qedhere\end{align*}
\end{proof}

Resuming the proof of Theorem \ref{regrigid}, if $N$ is regular then a standard argument (see, \emph{e.g.}, \cite[Lemma 5.35]{MS}) shows that, where $\pi\co N\to Z$ is the submersion whose fibers are the leaves of the characteristic foliation,  a symplectic form $\sigma$ may be constructed on $Z$ which obeys $\pi^*\sigma=\omega|_N$, and so we can apply Lemma \ref{prelag} and Corollary \ref{denserigid} to prove Theorem \ref{regrigid} in this case.

We now turn to the other case in Theorem \ref{regrigid}, in which $N=J^{-1}(\eta)$ where $J\co M\to\mathfrak{g}^*$ is an equivariant moment map for a Hamiltonian action of a compact Lie group $G$, and $\eta\in\mathfrak{g}^*$ is a regular value of $J$ which is fixed by the coadjoint action of $G$ on $\mathfrak{g}^*$.

 The assumption that $\eta$ is a regular value of $J$ implies that $J^{-1}(\eta)$ is a submanifold upon which $G$ acts locally freely (see \cite[Proposition 1.1.2]{MMOPR}), while  the assumption that $\eta$ is fixed by the coadjoint action implies that $J^{-1}(\eta)$ is coisotropic.  Although the action of $G$ on $J^{-1}(\eta)$ might not be free, one still has a stratification of $J^{-1}(\eta)$ by orbit type (\emph{i.e.}, by conjugacy classes of stabilizers); this stratification has a unique top stratum $N_0\subset J^{-1}(\eta)$ (the ``principal orbit stratum'') which is open and dense in $J^{-1}(\eta)$ (see, \emph{e.g.}, \cite[Theorem 5.9]{SL}).  Although $G$ might still not act freely on $N_0$, \cite[Theorem 1.4.2]{MMOPR} and \cite[Theorem 2.1]{SL} show  that under our assumptions it holds both that $Z_0=N_0/G$  has a unique smooth structure so that $\pi\co N_0\to Z_0$ is a submersion, and that $Z_0$ admits a symplectic structure such that the projection $\pi$ obeys the requirements of Lemma \ref{prelag}.  Since $N_0\subset J^{-1}(\eta)$ is dense, it therefore follows from Corollary \ref{denserigid} that $J^{-1}(0)$ is CH-rigid.
\end{proof}

\begin{remark}  Let us consider again the products of ellipsoids $E_{\vec{a}^{(1)}}\times\cdots\times  E_{\vec{a}^{(k)}}$ which were shown to be CH-rigid in Example \ref{ellipsoid} using Corollary \ref{denserigid}.
Depending on the numbers $a_{i}^{(j)}\in (0,\infty)$, the coisotropic submanifold $E_{\vec{a}^{(1)}}\times\cdots\times  E_{\vec{a}^{(k)}}$  may or may not satisfy the hypotheses of Theorem \ref{regrigid}.  If for each $j$ we have $a_{1}^{(j)}=\cdots=a_{n_j}^{(j)}$, so that each $E_{\vec{a}^{(j)}}$ is a sphere of radius $\sqrt{a_{1}^{(j)}}$, then the characteristic foliation of $E_{\vec{a}^{(1)}}\times\cdots\times  E_{\vec{a}^{(k)}}$ will just be the vertical foliation given by the product of the Hopf fibrations $E_{\vec{a}^{(j)}}\to \mathbb{C}P^{n_j-1}$; thus in this case  $E_{\vec{a}^{(1)}}\times\cdots\times  E_{\vec{a}^{(k)}}$ is regular.  If instead it only holds that for each $j$ the ratios $\frac{{a}_{i_1}^{(j)}}{a_{i_2}^{(j)}}$ are rational, so that there are some $\lambda_j\in (0,\infty)$ and $m_{ij}\in \mathbb{Z}_+$ so that $a_{i}^{(j)}=m_{ij}\lambda_j$, then it is not difficult to see that $E_{\vec{a}^{(1)}}\times\cdots\times  E_{\vec{a}^{(k)}}$ is the preimage of a regular value of the moment map for a Hamiltonian $T^k$-action on $\R^{2\sum n_i}$, so that Theorem \ref{regrigid} again applies.  However when the $a_{i}^{(j)}$ are rationally independent Theorem \ref{regrigid} does not seem to apply, demonstrating the greater generality the situations covered by Corollary \ref{denserigid}.
\end{remark}

In a different direction, some coisotropic submanifolds $N$ can be shown to be CH-rigid along the following lines: one shows that if $N$ were \emph{not} CH-rigid, then its rigid locus would have to be suitably ``small,'' and then deduces  from
Theorem \ref{stable} (or from a simpler argument) that this would contradict known rigidity properties for $N$ or for the stabilization $\hat{N}$.  

The basic observation is that $R_N\subset N$ is a closed subset which is invariant under the action of the stabilizer $\Sigma_N$ on $N$, and this imposes significant restrictions on $R_N$.  As a simple special case, for any closed subset $N$, coisotropic or not, on which $\Sigma_N$ acts transitively, it must hold that $N$ is either weightless or CH-rigid, and in some cases when $N$ is coisotropic results such as those in \cite{Gi} or \cite{U} can be used to rule out the former alternative.  

So we now consider the action of the $\Sigma_N$ on a coisotropic submanifold $N$.  Note that any $\phi\in \Sigma_N$ obeys, for each $x\in N$, $\phi_*T_{x}N^{\omega}=T_{\phi(x)}N^{\omega}$, in view of which $\phi$ permutes the leaves of the characteristic foliation.  In particular if not all leaves of the characteristic foliation are diffeomorphic then $\Sigma_N$ will not act transitively on $N$.  (If $N$ happens to be regular, on the other hand, then one can show that $\Sigma_N$ acts transitively on $N$, but of course this case is already covered by Theorem \ref{regrigid}.) 
We do in any case have the following:

\begin{prop}\label{lfws}  Let $N$ be a coisotropic submanifold of $(M,\omega)$. Then where $G$ is the subgroup of $\Sigma_N$ consisting of Hamiltonian diffeomorphisms of $M$ which preserve each leaf of the characteristic foliation, $G$ acts transitively on every leaf of the characteristic foliation.
\end{prop}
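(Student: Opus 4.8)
The plan is to reduce the assertion to a local construction. Fix a leaf $\mathcal{F}$ of the characteristic foliation and two points $p,q\in\mathcal{F}$. Since leaves are path-connected, choose a path $\gamma\co[0,1]\to\mathcal{F}$ from $p$ to $q$; its image is a compact subset of $N$. It will suffice to prove that every point of $N$ has a neighborhood $U$ with the property that any two points lying in a common connected component of $\mathcal{F}\cap U$ can be joined by an element of $Ham(M,\omega)$ that preserves $N$ and every leaf of the characteristic foliation. Indeed, granting this, cover $\gamma([0,1])$ by finitely many such neighborhoods and refine to a partition $0=t_0<\cdots<t_m=1$ so that each subarc $\gamma([t_{j-1},t_j])$ lies in one of them, $U_j$; being connected, $\gamma([t_{j-1},t_j])$ then lies in a single component of $\mathcal{F}\cap U_j$, so we obtain $\phi_j\in G$ with $\phi_j(\gamma(t_{j-1}))=\gamma(t_j)$, and $\phi_m\circ\cdots\circ\phi_1\in G$ carries $p$ to $q$.

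The source of elements of $G$ is the following observation. If $H\co M\to\R$ is compactly supported and $dH_x$ annihilates $T_xN$ for every $x\in N$, then for such $x$ and any $v\in T_xN$ we have $\omega_x(v,X_H(x))=dH_x(v)=0$, so $X_H(x)\in T_xN^\omega\subset T_xN$ by coisotropy. Thus $X_H|_N$ is a section of the characteristic distribution $TN^\omega$, and since $H$ is compactly supported its flow is complete; hence each time-$t$ map $\phi^t$ of $X_H$ preserves $N$ and carries every point of $N$ along the leaf through it, i.e.\ $\phi^t\in G$. So it remains to choose, for two given points of a common component of $\mathcal{F}\cap U$, such an $H$ whose time-one flow maps one to the other.

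Here I would use the standard local normal form for coisotropic submanifolds (a consequence of a relative Darboux theorem and the linear normal form for a coisotropic subspace of a symplectic vector space): near any point of $N$ there is a Darboux ball $\psi\co U\to\R^{2n}$, $\psi_*\omega=\sum_i dp_i\wedge dq_i$, with $\psi(N\cap U)=\{p_1=\cdots=p_k=0\}$ where $k=\dim M-\dim N$. In these coordinates, for $x\in N\cap U$ one has $T_xN^\omega=\mathrm{span}\{\partial_{q_1},\dots,\partial_{q_k}\}$, the components of $\mathcal{F}\cap U$ are the (convex) slices of $U$ on which $p_1,\dots,p_k$ vanish and $p_{k+1},\dots,p_n,q_{k+1},\dots,q_n$ are constant, and the Hamiltonian vector field of $\langle c,p\rangle:=\sum_{i\le k}c_ip_i$ is the constant field $\sum_{i\le k}c_i\partial_{q_i}$, whose time-one flow translates by $c$ in the $q_1,\dots,q_k$ directions. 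Two points in a common component of $\mathcal{F}\cap U$ differ only in the coordinates $q_1,\dots,q_k$, so taking $c$ equal to their difference and a cutoff $\beta\co M\to[0,1]$ supported in $U$ and equal to $1$ on a neighborhood of the segment joining them (which lies in the component, hence in $N$, by convexity), the time-one map of $H:=\beta\langle c,p\rangle$, extended by $0$ outside $U$, sends the first point to the second.

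The only point requiring care is that multiplying by the cutoff must not destroy tangency to $N$. For $x\in N$ outside $U$ we have $H=0$, while for $x\in N\cap U$ we compute $dH_x=\beta(x)\sum_{i\le k}c_i(dp_i)_x+\langle c,p(x)\rangle(d\beta)_x$, whose second term vanishes because $p_1(x)=\cdots=p_k(x)=0$ and whose first term annihilates $T_xN=\bigcap_{i\le k}\ker(dp_i)_x$. Thus $dH_x$ annihilates $T_xN$ for every $x\in N$, so the observation applies and each of these local maps — hence their composition — lies in $G$. I expect this interaction between the cutoff and tangency to $N$ (which works precisely because the local model Hamiltonian vanishes on $N$), together with the routine point of choosing the covering fine enough that $\gamma(t_{j-1})$ and $\gamma(t_j)$ share a component of $\mathcal{F}\cap U_j$ so the connecting segment stays in $N$, to be the only real content of the argument.
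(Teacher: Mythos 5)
Your proof is correct, and the underlying idea is the same as the paper's: construct a compactly supported Hamiltonian that vanishes on $N$, check that its Hamiltonian vector field restricted to $N$ is a section of $TN^\omega$ (so its flow lies in $G$), and use it to push points along a leaf. The execution differs. The paper works globally in Marle's tubular-neighborhood model for coisotropic submanifolds, identifying a neighborhood of $N$ with a neighborhood of the zero section of $E^*=(TN^\omega)^*$ with the form $\Omega=\pi^*(\omega|_N)+d\theta_h$; it then chooses a time-dependent vector field $V_t$ tangent to $TN^\omega$ extending $\gamma'(t)$, sets $H_t(x,p)=p(V_t(x))$, verifies $X_{H_t}|_N=V_t$, and cuts off. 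You instead first reduce to a purely local statement by covering the compact arc $\gamma([0,1])$ and refining the partition, then invoke the local coisotropic Darboux normal form $N=\{p_1=\cdots=p_k=0\}$ (which is indeed a standard consequence of the Marle--Gotay neighborhood theorem) and use the translation Hamiltonians $\beta\langle c,p\rangle$. Both approaches rest on a normal-form theorem of comparable depth, and both rely on the same mechanism to survive the cutoff: because the model Hamiltonian vanishes on $N$, multiplying by a bump function does not destroy the tangency of $X_H|_N$ to $TN^\omega$. You make that check explicit as a general observation, which is a cleaner packaging than the paper's direct computation in the $E^*$-model (where the same fact is used tacitly when the cutoff is applied); in exchange, your reduction costs an extra, if routine, patching step.
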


\begin{proof} Write $E=TN^{\omega}$ (so since $N$ is coisotropic $E\subset TN$), choose a Riemannian metric $h$ on $N$, and let $\Pi_h\co TN\to E$ be the orthogonal projection induced by $h$.  On the total space of the vector bundle $\pi\co E^*\to N$ define a $1$-form $\theta_h\in \Omega^1(E^*)$ by (for $x\in N$,  $p\in E^{*}_{x}$, and $v\in T_{(x,p)}E^*$) \[ (\theta_h)_{x,p}(v)=p(\Pi_h(\pi_*v))\] and define a $2$-form $\Omega$ on $E^*$ by \[ \Omega=\pi^*(\omega|_N)+d\theta_h.\]

As seen in \cite[Proposition 3.2]{Ma},  $\Omega$ restricts symplectically to a neighborhood $U$ of the zero section $N\subset E^*$ and $\Omega|_N=\omega|_N$; moreover there is a symplectomorphism from this neighborhood $U$ of $N\subset E^*$ to a neighborhood of $N$ in the original symplectic manifold $M$, restricting as the identity on $N$.  Consequently it suffices to prove  that there is a compactly supported Hamiltonian diffeomorphism of $(U,\Omega)$ which preserves each leaf of the characteristic foliation of the zero section $N$ and maps $x$ to $y$, where $x$ and $y$ are any given points of $N$ lying on the same leaf $\Lambda\subset N$.

To do so, choose a smooth path $\gamma\co [0,1]\to \Lambda$, so in particular $\gamma'(t)\in E_{\gamma(t)}$ for all $t$.  Let $V_t$ be a smooth one-parameter family of vector fields on $N$ such that $V_t(\gamma(t))=\gamma'(t)$ for all $t$, and $V_t(x)\in E_x$ for all $t,x$.  Define a function $H\co [0,1]\times E^*\to \R$ by $H(t,x,p)=p(V_t(x))$ for $t\in [0,1]$, $x\in N$, and $p\in E^{*}_{x}$.  

Along the zero section $N$, we have a canonical splitting $TE^{*}|_N\cong TN\oplus E^*$.  In terms of this splitting, and writing $H_t(x,p)=H(t,x,p)$, we see that $(dH_t)_{(x,0)}(v,\alpha)=\alpha(V_t(x))$ at any point $(x,0)$ on the zero-section, for all $v\in T_xN$ and $\alpha\in E^*$.  Meanwhile the vector field given in terms of the splitting $TE^{*}|_N\cong TN\oplus E^*$ by $(V_t,0)$ obeys, for $v\in T_xN$ and $\alpha\in E^*$, \[ \Omega_{(x,0)}((v,\alpha),(V_t,0))=(d\theta_h)_{(x,0)}((0,\alpha),(V_t,0))=\alpha(V_t(x))=(dH_t)_{(x,0)}(v,\alpha)\] (where we have used that $\iota_{V_t}(\omega|_N)=0$).  This shows that the restriction of the Hamiltonian vector field of $H_t$ to the zero section $N$ coincides with the vector field $V_t$, which was chosen to be tangent to the characteristic foliation and to the given curve $\gamma$ contained in one of the leaves.  Consequently, after cutting off $H_t$ to be compactly supported in $U$, we obtain as its time-one flow a Hamiltonian diffeomorphism $\phi$ of $U$ which preserves the leaves of the characteristic foliation on the zero section $N$ and such that $\phi(\gamma(0))=\gamma(1)$.  Since $\gamma(0)$ and $\gamma(1)$ may be chosen arbitrarily within the same leaf this proves the result.
\end{proof}

\begin{cor}\label{densealt} Let $N$ be a coisotropic submanifold of the symplectic manifold $(M,\omega)$ which is closed as a subset, and let $\Lambda$ be a leaf of the characteristic foliation of $N$ which is dense in $N$.  If $N$ is not CH-rigid then $\Lambda\cap R_N=\varnothing$.
\end{cor}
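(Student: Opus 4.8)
The plan is to exploit Proposition \ref{lfws}: it produces a subgroup $G\subseteq\Sigma_N$ of leaf-preserving Hamiltonian diffeomorphisms of $M$ that acts transitively on the leaf $\Lambda$, and I will combine this with the fact that $R_N$ is a $\Sigma_N$-invariant closed subset of $N$. I would argue by contraposition, assuming $\Lambda\cap R_N\neq\varnothing$ and deducing that $N$ is CH-rigid.

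Fix $x\in\Lambda\cap R_N$. The first step is to invoke Proposition \ref{lfws} to obtain the subgroup $G\subseteq\Sigma_N$ consisting of those elements of $Ham(M,\omega)$ that preserve every leaf of the characteristic foliation, and to conclude that $G$ acts transitively on $\Lambda$, so that $\Lambda=\{\psi(x)\mid\psi\in G\}$. The second step: since $G\subseteq\Sigma_N\subseteq\bar{\Sigma}_N$, Lemma \ref{RNlemma}(iv) gives $\psi(R_N)=R_N$ for every $\psi\in G$; as $x\in R_N$ this forces $\psi(x)\in R_N$ for all $\psi\in G$, hence $\Lambda\subseteq R_N$. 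The third step: $R_N$ is closed in $N$ by Lemma \ref{RNlemma}(i), so $\overline{\Lambda}\subseteq R_N$; since $\Lambda$ is dense in $N$ by hypothesis, we get $R_N=N$, and therefore $N$ is CH-rigid by Lemma \ref{RNlemma}(ii), which is the desired contradiction. (If instead $N=M$ — the one case in which Lemma \ref{RNlemma} does not literally apply — then $\mathcal{L}(N)=\{N\}$ and $N$ is CH-rigid for trivial reasons, so the assertion to be proven is vacuous; thus we may assume $N$ is a proper closed subset throughout.)

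I do not expect a serious obstacle beyond correctly citing Proposition \ref{lfws}, which is where the content lies: the key nontrivial input is precisely that any point of $\Lambda$ can be carried to any other point of $\Lambda$ by a Hamiltonian diffeomorphism of all of $M$ that stabilizes $N$. The only points needing a line of care are verifying that the elements of $G$ furnished by Proposition \ref{lfws} are genuine compactly supported Hamiltonian diffeomorphisms of $M$ — they are, being produced from compactly supported Hamiltonians on a Marle normal-form neighborhood of $N$ and extended by zero — and handling the degenerate case $N=M$ as indicated above.
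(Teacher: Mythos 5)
Your proof is correct and follows essentially the same line as the paper's: cite Proposition \ref{lfws} to move $x$ along $\Lambda$ within $\Sigma_N$, use $\Sigma_N$-invariance of $R_N$ (Lemma \ref{RNlemma}(iv)) to conclude $\Lambda\subset R_N$, then closedness of $R_N$ plus density of $\Lambda$ to get $R_N=N$, and finish with Lemma \ref{RNlemma}(ii). Your explicit handling of the degenerate case $N=M$ (which the paper's Lemma \ref{RNlemma} excludes by its "proper closed subset" hypothesis) is a reasonable bit of extra care, though it is vacuous here since a dense leaf of a zero-dimensional foliation forces $M$ to be a point.
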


\begin{proof} If on the contrary we had some $x\in \Lambda\cap R_N$ then since we have $\phi(R_N)=R_N$ for all $\phi\in \Sigma_N$ it follows from Proposition \ref{lfws} that $\Lambda\subset R_N$.  So since $R_N\subset N$ is closed and $\Lambda$ is assumed dense in $N$ we obtain $R_N=N$.  Now use Lemma \ref{RNlemma}(ii).
\end{proof}

\begin{cor} Let $N$ be a compact coisotropic submanifold of the symplectic manifold $(M,\omega)$, and suppose that there is a closed subset $S\subset N$ such that $e(S,M)=0$ and such that for every $x\in N\setminus S$ the leaf of the characteristic foliation containing $x$ is dense in $N$.  If $N$ is not CH-rigid then the stabilization $\hat{N}\subset M\times \R^2$ must be weightless.
\end{cor}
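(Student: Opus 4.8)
The plan is to show that the two hypotheses on $S$ force the rigid locus $R_N$ to lie inside $S$, and then to feed this into Theorem \ref{stable}.

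First I would observe that, since $N$ is assumed not CH-rigid, Lemma \ref{RNlemma}(ii) tells us $R_N\neq N$, and, more to the point, Corollary \ref{densealt} becomes available: for \emph{every} leaf $\Lambda$ of the characteristic foliation of $N$ which is dense in $N$ we have $\Lambda\cap R_N=\varnothing$. Now suppose, toward a contradiction, that some point $x\in R_N$ fails to lie in $S$. Then $x\in N\setminus S$, so by hypothesis the characteristic leaf through $x$ is dense in $N$; but $x$ lies on that leaf and also in $R_N$, contradicting Corollary \ref{densealt}. Hence $R_N\subset S$. (Note that $N$ being compact provides the ``closed as a subset'' hypothesis of Corollary \ref{densealt}, and $R_N$ is in any case closed by Lemma \ref{RNlemma}(i).)

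Next, since $R_N\subset S$ and displacement energy is monotone with respect to inclusion of closed subsets --- any $\phi\in Ham(M,\omega)$ with $\phi(S)\cap S=\varnothing$ automatically has $\phi(R_N)\cap R_N=\varnothing$ --- we get $e(R_N,M)\leq e(S,M)=0$, so $e(R_N,M)=0$. (If $R_N=\varnothing$ this is immediate and what follows is unaffected.) At this point $N$ is a compact subset of $M$ with $e(R_N,M)=0$, which is precisely the hypothesis of Theorem \ref{stable}; that theorem then yields that $\hat{N}=S^1\times N\subset\R^2\times M$ is weightless, completing the proof.

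I do not anticipate a serious obstacle here: the real content of the statement is just the combination of Corollary \ref{densealt} (used to confine $R_N$ to $S$) with Theorem \ref{stable}. The only point requiring any care is verifying that the hypotheses of those two results genuinely hold in the present setting --- in particular, that the compactness of $N$ simultaneously supplies the ``closed as a subset'' assumption needed to invoke Corollary \ref{densealt} and the compactness assumption needed to invoke Theorem \ref{stable}.
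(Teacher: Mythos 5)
Your argument is correct and is exactly the paper's own proof: invoke Corollary \ref{densealt} to conclude $R_N\subset S$, deduce $e(R_N,M)\leq e(S,M)=0$ by monotonicity, and then apply Theorem \ref{stable}. The remarks about compactness supplying the needed hypotheses are accurate.
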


\begin{proof}
If $N$ is not CH-rigid, then Corollary \ref{densealt} shows that $(N\setminus S)\cap R_N=\varnothing$, \emph{i.e.} that $R_N\subset S$.  So $e(R_N,M)\leq e(S,M)=0$, and so Theorem \ref{stable} shows that $\hat{N}$ is weightless.
\end{proof}

Recall  that a codimension-$k$ coisotropic submanifold $N\subset (M^{2n},\omega)$ is called 
\emph{stable} if there are $1$-forms $\alpha_1,\ldots,\alpha_k\in \Omega^1(N)$ 
such that for each $i$ we have $\ker(\omega|_N)\subset \ker d\alpha_i$ and such that $\alpha_1\wedge\cdots\wedge \alpha_k\wedge(\omega|_N)^{n-k}$ is a volume form on $N$.  
See \cite[Section 2.1]{Gi} for introductory remarks about stable coisotropic submanifolds.  Note in particular that if $N_1$ is a coisotropic submanifold of $M_1$, and $N_2$ is a stable coisotropic submanifold of $M_2$, then $N_1\times N_2$ is a stable coisotropic submanifold of $M_1\times M_2$.  Since $S^1\subset \mathbb{R}^2$ is stable, this in particular implies that if $N\subset M$ is stable then so is $\hat{N}\subset M\times \R^2$.  The following corollary now implies Theorem \ref{coisomain}(ii).

\begin{cor}\label{posdisp} Let $N\subset M$ be a compact stable coisotropic submanifold, and assume either that $(M,\omega)$ is compact and the group $\left\{\left.\int_{S^2}u^*\omega\right|u:S^2\to N\right\}$ is discrete, or else that $(M,\omega)$ is symplectically aspherical, geometrically bounded, and wide.\footnote{``Wide'' means that there is an exhausting Hamiltonian $H\co M\to \R$ having a positive lower bound on the periods of its nontrivial contractible periodic orbits; see \cite{Gu}.}   Suppose moreover that there is a closed subset $S\subset N$ with $e(S,M)=0$ such that every leaf of the characteristic foliation passing through $N\setminus S$ is dense.  Then $N$ is CH-rigid. 
\end{cor}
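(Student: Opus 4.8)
The plan is to argue by contradiction, combining the corollary immediately preceding with Proposition \ref{obvious} and with the known positivity of displacement energy for stable coisotropic submanifolds.

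Suppose $N$ is not CH-rigid. The hypotheses here are exactly those of the preceding corollary (a compact coisotropic $N$, a closed subset $S\subset N$ with $e(S,M)=0$, and every leaf through $N\setminus S$ dense --- that corollary in turn uses Corollary \ref{densealt} to get $R_N\subset S$, hence $e(R_N,M)=0$, and then Theorem \ref{stable}), so the stabilization $\hat N=S^1\times N\subset M\times\R^2$ is weightless. Next I would observe that $\hat N$ is displaceable in $(M\times\R^2,\Omega)$: one disjoins the circle factor $S^1\subset\R^2$ from itself by a compactly supported Hamiltonian diffeomorphism of $\R^2$, and --- since $N$ is compact --- multiplies the generating Hamiltonian by a function on $M$ which equals $1$ near $N$ and is compactly supported, obtaining an element of $Ham(M\times\R^2,\Omega)$ that disjoins $\hat N$ from itself. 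By Proposition \ref{obvious}, it follows that $e(\hat N,M\times\R^2)=0$.

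To reach a contradiction I would produce a positive lower bound for $e(\hat N,M\times\R^2)$. First, $\hat N$ is a stable coisotropic submanifold of $M\times\R^2$: as noted in the discussion of stability just before the statement, a product of a coisotropic submanifold with the stable coisotropic submanifold $S^1\subset\R^2$ is stable coisotropic. Next I would check that $M\times\R^2$ inherits the relevant geometric hypothesis. If $M$ is compact, then $M\times\R^2$ is geometrically bounded, and the spherical area group of $\hat N$ coincides with that of $N$ --- a sphere in $S^1\times N$ has vanishing $(dx\wedge dy)$-area and projects to a sphere in $N$ --- hence is discrete. If $M$ is symplectically aspherical, geometrically bounded, and wide, then so is $M\times\R^2$: asphericity and geometric boundedness pass routinely to products with $\R^2$, while for wideness one sums an exhausting Hamiltonian on $M$ with $x^2+y^2$ on $\R^2$ and observes that a nontrivial contractible periodic orbit of the sum projects nontrivially to one of the two factors and hence has period bounded below. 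Given these verifications, the displacement-energy theorems for stable coisotropic submanifolds of \cite{Gi} and \cite{U} apply to $\hat N\subset M\times\R^2$ and give $e(\hat N,M\times\R^2)>0$, contradicting the previous paragraph. Hence $N$ is CH-rigid.

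The only substantive step is the last one: one must make sure the precise hypotheses of the quoted displacement-energy results survive the passage from $(M,N)$ to $(M\times\R^2,\hat N)$ --- the discreteness of the relevant spherical area group in the first case, and wideness in the second. The rest (the reduction through the preceding corollary, the displaceability of $\hat N$, Proposition \ref{obvious}, and the stability of $\hat N$) is immediate, so the content of the argument lies in correctly invoking that external rigidity input.
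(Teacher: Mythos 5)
Your argument is correct and is essentially the paper's proof: assume $N$ is not CH-rigid, deduce from the preceding corollary that the stabilization $\hat N$ is weightless, observe that $\hat N$ is displaceable so $e(\hat N, \R^2\times M)=0$ by Proposition \ref{obvious}, note that $\hat N$ is stable coisotropic, and contradict the positivity of displacement energy for stable coisotropics from \cite{Gi} and \cite{U}. The one place your phrasing differs is in how the compact case is passed to $\R^2\times M$: you propose to apply the result to $\hat N\subset\R^2\times M$ by verifying that $\R^2\times M$ is geometrically bounded with discrete spherical area group, but the theorem the paper cites in this case (\cite[Theorem 8.4]{U}) is stated for compact ambient manifolds; the paper instead handles the noncompactness of $\R^2\times M$ by noting (in a footnote) that the support of a displacing Hamiltonian can be embedded into a compact symplectic manifold, as in the proof of \cite[Corollary 8.6]{U}. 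So before finalizing you would need to check whether the cited result actually admits the geometrically-bounded formulation you use, or else substitute the compactification trick.
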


\begin{proof} If $N$ were not CH-rigid, then by the previous corollary $\hat{N}$ would be weightless.  Now $\hat{N}$ is obviously displaceable (by translations in the $\R^2$ factor), so if $\hat{N}$ were weightless then $\hat{N}$ would have zero displacement energy by Proposition \ref{obvious}. But as noted earlier $\hat{N}$ is stable, and using \cite[Theorem 8.4]{U} in the compact case\footnote{The fact that $\R^2\times M$ is not compact does not pose a problem here, since the support of a Hamiltonian displacing $\hat{N}$ can be embedded in a compact symplectic manifold, as in the proof of \cite[Corollary 8.6]{U}.}, or \cite[Theorem 2.7(i)]{Gi} in the aspherical case one can show that $e(\hat{N},\R^2\times M)>0$, a contradiction.
\end{proof}

\begin{ex}  Let $M$ denote the $6$-dimensional torus $\{(x_1,y_1,x_2,y_2,x_3,y_3)|x_i,y_i\in\mathbb{R}/\mathbb{Z}\}$, and where $\ep,\delta\in \R$ have the property that $1,\ep,\delta$ are linearly independent over $\mathbb{Q}$, endow $M$ with the irrational symplectic form \[ \omega=dx_1\wedge dy_1+dx_2\wedge dy_2+dx_3\wedge dy_3+dy_1\wedge(\ep dx_2+\delta dy_2).\]  Let \[ N=\{x_1+x_2=x_3=0\}\] (where of course the equalities are mod $\mathbb{Z}$), so that $N$ is a coisotropic $4$-torus in $N$.  ($M$ splits as a product of an irrational $4$-torus $M_0$ spanned by $x_1,y_1,x_2,y_2$ and a standard $2$-torus $T$, and $N$ is the product of a hypersurface $N_0\subset M_0$ and a standard meridian $\mu\subset T$.)  Using the $1$-forms $\alpha_1=dy_1$ and $\alpha_2=dy_3$, one sees that $N$ is stable.  The distribution $TN^{\omega}$ may be computed to be spanned by the vectors \[ \delta(\partial_{x_1}-\partial_{x_2})+\partial_{y_1}+(1+\ep)\partial_{y_2}\quad \mbox{and}\quad \partial_{y_3},\] and so by the assumption on $\ep$ and $\delta$ all of the characteristic leaves of $N$ are dense: they are products of dense lines in the $3$-torus $N_0$ with the meridian $\mu$.  Thus Corollary \ref{posdisp} applies to show that $N$ is CH-rigid.
\end{ex}

\section{Generic weightlessness} \label{genwt}

We now begin the proof of Theorem \ref{int1}(ii).  This will involve an iterative use of Lemma \ref{rnp}: at the $r$th step we will show that, for a generic closed submanifold $N$ of codimension at least two, at all points of the rigid locus $R_N$ certain identities must be satisfied by the derivatives up to order $r$ of the embedding of $N$.  Since we obtain new identities for every value of $r$, if the identities are cut out transversely (as one expects to occur generically by the jet transversality theorem) then for a sufficiently large value of $r$ this will prove that $R_N$ is empty and hence that $N$ is weightless.  Before setting up the argument, we will develop some of the algebra underlying these identities, and show that their solution spaces are submanifolds.  

\subsection{Some multilinear algebra}\label{algebra}

Fix throughout this subsection two finite-dimensional real vector spaces $V$ and $W$ and an antisymmetric nondegenerate bilinear form $\omega\co W\times W\to \R$.  
For $k\geq 1$ let $Sym^k(V,W)$ denote the vector space of symmetric, $k$-linear maps $A\co V^k\to W$, and $Mult^k(V,W)$ the vector space of $k$-linear (not necessarily symmetric) maps $A\co V^k\to W$.  
 Define, for any integer $s\geq 2$, a map \begin{align*} \prod_{k=1}^{s} Sym^k(V,W)&\to Mult^{s+1}(V,W) \\ (A_1,\ldots,A_s)&\mapsto \tau_{A_1,\ldots,A_s}\end{align*}  where $\tau_{A_1,\ldots,A_s}$ is given by the formula \begin{align} \tau_{A_1,\ldots,A_s}&(v_0,v_1,\ldots,v_{s-1},v_{s})= \nonumber \\& \sum_{\sigma\in S_{s-1}}\sum_{k=0}^{s-1}\frac{1}{k!(s-k-1)!}\omega\left(A_{k+1}(v_0,v_{\sigma(1)},\ldots,v_{\sigma(k)}),A_{s-k}(v_{\sigma(k+1)},\ldots,v_{\sigma(s-1)},v_{s})\right).\label{tausym}\end{align} Here $S_{s-1}$ denotes as usual the group of permutations of the set $\{1,\ldots,s-1\}$.  Equivalently, in light of the symmetry of the $A_j$, \begin{equation}\label{tauset} \tau_{A_1,\ldots,A_{s}}(v_0,\ldots,v_s)=\sum_{k=0}^{s-1}\sum_{\substack{\scriptscriptstyle{\{1,\ldots,s-1\}}=\\ \scriptscriptstyle{ \{i_1,\ldots,i_{k}\}\coprod\{j_1,\ldots,j_{s-k-1}\}}}}\omega\left(A_{k+1}(v_0,v_{i_1},\ldots,v_{i_k}),A_{s-k}(v_{j_1},\ldots,v_{j_{s-k-1}},v_{s})\right).\end{equation}  Here and below we take it as understood that the partitions $\{1,\ldots,s-1\}=\{i_1,\ldots,i_k\}\coprod\{j_1,\ldots,j_{s-k-1}\}$ appearing in the sum have $i_1<\cdots<i_k$ and $j_1<\cdots<j_{s-k-1}$.
 
Here are some salient properties of the $\tau_{A_1,\ldots,A_{s}}$:

\begin{lemma}\label{tauprop}  We have, for any $A_k\in Sym^k(V,W)$ and $v_k\in V$:\begin{itemize}
\item[(i)] \[ \tau_{A_1,\ldots,A_s}(v_0,v_1,\ldots,v_{s-1},v_s)=-\tau_{A_1,\ldots,A_s}(v_s,v_1,\ldots,v_{s-1},v_0).\]
\item[(ii)] For any $\sigma\in S_{s-1}$, \[ \tau_{A_1,\ldots,A_s}(v_0,v_1,\ldots,v_{s-1},v_s)=\tau_{A_1,\ldots,A_s}(v_0,v_{\sigma(1)},\ldots,v_{\sigma(s-1)},v_s).\]
\item[(iii)]\[ \tau_{A_1,\ldots,A_s}(v_0,v_1,v_2,\ldots,v_{s-1},v_s)+\tau_{A_1,\ldots,A_s}(v_s,v_0,v_2,\ldots,v_{s-1},v_1)+\tau_{A_1,\ldots,A_s}(v_1,v_s,v_2,\ldots,v_{s-1},v_0)=0.\]
\item[(iv)]  For fixed $A_1,\ldots,A_{s-1}$ such that $A_1\co V\to W$ is injective, the map \[ A_s\mapsto \tau_{A_1,\ldots,A_s}\] is an affine surjection from $Sym^s(V,W)$ to the space \[ \mathcal{T}_{s}(V,W)=\{\tau\in Mult^{s+1}(V,W)|\tau\mbox{ obeys (i),(ii),(iii) above}\}.\]
\item[(v)] Where for $j=2,\ldots,s$ we denote by $A_j(v_s,\cdot)$ the element of $Sym^{j-1}(V,W)$ given by including $v_s$ into $A_j$ as the first argument, we have  \[ \left.\frac{d}{dt}\right|_{t=0}\left(\tau_{A_1+tA_2(v_s,\cdot),A_2+tA_3(v_s,\cdot),\ldots,A_{s-1}+tA_{s}(v_s,\cdot)}(v_0,\ldots,v_{s-2},v_{s-1})\right)=\tau_{A_1,\ldots,A_s}(v_0,\ldots,v_{s-2},v_s,v_{s-1}).\] 
\end{itemize}
\end{lemma}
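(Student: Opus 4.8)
The plan is to establish the five parts in sequence, deducing (iv) from (i)--(iii) and treating (v) separately. Parts (i) and (ii) follow directly from the unordered form (\ref{tauset}). For (ii): a permutation of $v_1,\dots,v_{s-1}$ merely relabels the index set $\{1,\dots,s-1\}$ over which the partitions $\{i_1,\dots,i_k\}\sqcup\{j_1,\dots,j_{s-k-1}\}$ run, and since each $A_j$ is symmetric every summand depends only on the partition as an unordered pair of subsets; hence the sum is unchanged. For (i): swap $v_0$ and $v_s$ in (\ref{tauset}), reverse each pairing $\omega(A_{k+1}(\cdots),A_{s-k}(\cdots))$ by antisymmetry of $\omega$, use symmetry of the $A_j$ to return $v_0$ and $v_s$ to leading and trailing position, and reindex $k\mapsto s-1-k$ together with the obvious involution on partitions; the outcome is $-\tau_{A_1,\dots,A_s}(v_0,\dots,v_s)$.

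For (iii) I would first reduce to the diagonal case $v_2=\cdots=v_{s-1}$, which is legitimate by (ii) and polarization, and then expand each of the three cyclically rotated terms by (\ref{tauset}). In each summand of $\tau_{A_1,\dots,A_s}(v_0,v_1,v_2,\dots,v_s)$ the vector $v_0$ occurs in the first factor, $v_s$ occurs (trailing) in the second, and $v_1$ lies in one of the two factors. I would match each summand in which $v_1$ lies with $v_0$ to a summand of $\tau_{A_1,\dots,A_s}(v_s,v_0,v_2,\dots,v_1)$, and each in which $v_1$ lies with $v_s$ to a summand of $\tau_{A_1,\dots,A_s}(v_1,v_s,v_2,\dots,v_0)$, in both cases by swapping the two arguments of $\omega$ (using $\omega(x,y)=-\omega(y,x)$) and normalising slots by symmetry of the $A_j$. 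Then, across the three rotated sums, this pairing is a fixed-point-free involution sending each summand to its negative, so the total vanishes. Verifying that the pairing really is a bijection on the partition-indexed summands is the most delicate point in (i)--(iii).

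Property (iv) is the crux. Inspecting (\ref{tauset}), the only summands involving $A_s$ are those with $k=0$ and $k=s-1$, each linear in $A_s$, so $A_s\mapsto\tau_{A_1,\dots,A_{s-1},A_s}$ is affine with linear part \[ L(A_s)(v_0,\dots,v_s)=\omega(A_1v_0,A_s(v_1,\dots,v_s))+\omega(A_s(v_0,\dots,v_{s-1}),A_1v_s),\] and containment of its image in $\mathcal T_s$ is exactly parts (i)--(iii) (with $A_s=0$ exhibiting a base point, so $\mathcal T_s\neq\varnothing$). Since $\mathcal T_s$ is a linear subspace containing this base point $\tau_{A_1,\dots,A_{s-1},0}$, surjectivity reduces to $\operatorname{im}L=\mathcal T_s$. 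Now injectivity of $A_1$ and nondegeneracy of $\omega$ make $w\mapsto(v\mapsto\omega(A_1v,w))$ a surjection $W\to V^{*}$, so (choosing a linear section) as $A_s$ ranges over $Sym^s(V,W)$ the symmetric tensor $T(v_0;v_1,\dots,v_s):=\omega(A_1v_0,A_s(v_1,\dots,v_s))$ ranges over all of $V^{*}\otimes Sym^{s}V^{*}$, and $L(A_s)(v_0,\dots,v_s)=T(v_0;v_1,\dots,v_s)-T(v_s;v_0,v_1,\dots,v_{s-1})$; that is, $\operatorname{im}L$ equals the image of the Koszul differential $\delta\co V^{*}\otimes Sym^{s}V^{*}\to\Lambda^{2}V^{*}\otimes Sym^{s-1}V^{*}$. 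Meanwhile properties (i) and (ii) say exactly that the elements of $\mathcal T_s$ lie in $\Lambda^{2}V^{*}\otimes Sym^{s-1}V^{*}$ ($v_0,v_s$ antisymmetric, $v_1,\dots,v_{s-1}$ symmetric), and property (iii) says exactly that they lie in $\ker(\delta\co\Lambda^{2}V^{*}\otimes Sym^{s-1}V^{*}\to\Lambda^{3}V^{*}\otimes Sym^{s-2}V^{*})$; so $\mathcal T_s=\ker\delta$ there. Thus (iv) amounts to the exactness of $V^{*}\otimes Sym^{s}V^{*}\to\Lambda^{2}V^{*}\otimes Sym^{s-1}V^{*}\to\Lambda^{3}V^{*}\otimes Sym^{s-2}V^{*}$, which holds for $s\geq 2$; I would either cite this standard fact about the Koszul complex or give the contracting homotopy explicitly (e.g. $T(v_0;v_1,\dots,v_s)=\frac{1}{s+1}\sum_{j=1}^{s}\tau(v_0,v_1,\dots,\widehat{v_j},\dots,v_s,v_j)$, checking $\delta T=\tau$ from (iii)). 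This identification, or the explicit homotopy verification, is where the real work lies.

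For (v) I would set $B_j(t)=A_j+tA_{j+1}(v_s,\cdot)\in Sym^{j}(V,W)$ and differentiate $\tau_{B_1(t),\dots,B_{s-1}(t)}(v_0,\dots,v_{s-1})$ at $t=0$. Each summand of the $(s-1)$-ary form (\ref{tauset}) is a product of two of the $B_j$, so differentiating one factor at a time turns $\omega(A_{l+1}(v_0,\cdots),A_{s-1-l}(\cdots,v_{s-1}))$ into $\omega(A_{l+2}(v_s,v_0,\cdots),A_{s-1-l}(\cdots,v_{s-1}))+\omega(A_{l+1}(v_0,\cdots),A_{s-l}(v_s,\cdots,v_{s-1}))$. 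On the other side I would expand $\tau_{A_1,\dots,A_s}(v_0,\dots,v_{s-2},v_s,v_{s-1})$ by (\ref{tauset}) and split the partition-sum according to whether the index carrying $v_s$ lies in the first or the second block; bringing $v_s$ to the front in each case by symmetry of the relevant $A_j$, the two pieces coincide term-by-term (under the reindexings $l=k-1$ and $l=k$, with matching index ranges) with the two families produced by the differentiation, so the two sides agree. The one subtlety is the "diagonal" summands of the $(s-1)$-ary $\tau$ in which a single $B_j$ occurs in both factors; differentiating factor-by-factor, rather than treating $\tau$ as multilinear in the $B_j$, accounts for these correctly.
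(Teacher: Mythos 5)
Your proof is correct, and on parts (i), (ii), and (v) it takes essentially the same route as the paper: the paper writes $\tau_{A_1,\ldots,A_s}=\sum_k\tau^{(k)}$ and verifies (i), (ii) by pairing $\tau^{(k)}$ with $\tau^{(s-1-k)}$, and it proves (v) by exactly the term-by-term differentiation and re-sorting you describe. For (iii), your ``fixed-point-free involution sending each summand to its negative'' is the same cancellation the paper carries out explicitly (after splitting each partition-indexed sum according to whether the index $1$ lies in the first or second block), only sketched rather than verified; the preliminary reduction to $v_2=\cdots=v_{s-1}$ by polarization is not used in the paper and does not visibly simplify the bookkeeping, though it is harmless. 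The genuine divergence is (iv). The paper proves surjectivity of $A_s\mapsto\tau^{(0)}+\tau^{(s-1)}$ by an explicit basis construction: it first shows, by repeated use of (i)--(iii), that any $\tau\in\mathcal{T}_s$ is determined by its values on tuples of the shape $(e_{i_1},\ldots,\widehat{e_{i_j}},\ldots,e_{i_{s+1}},e_{i_j})$, and then for each such tuple builds an $A_s$ for which $\tau^{(0)}+\tau^{(s-1)}$ hits the corresponding dual basis element. You instead recognize (i)--(ii) as saying $\tau\in\Lambda^2V^*\otimes Sym^{s-1}V^*$, condition (iii) as $\delta\tau=0$ for the Koszul differential into $\Lambda^3V^*\otimes Sym^{s-2}V^*$, and the linear part $L(A_s)=T(v_0;\cdot)-T(v_s;\cdot)$ as precisely the Koszul differential out of $V^*\otimes Sym^sV^*$ (using injectivity of $A_1$ and nondegeneracy of $\omega$ to see that $T$ sweeps that whole space); surjectivity is then exactness of the Koszul complex at $\Lambda^2\otimes Sym^{s-1}$. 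This is cleaner and more structural, avoids the paper's delicate basis bookkeeping, and your proposed contracting homotopy $T(v_0;v_1,\ldots,v_s)=\frac{1}{s+1}\sum_{j=1}^s\tau(v_0,\ldots,\widehat{v_j},\ldots,v_s,v_j)$ does check out: each of the $s-1$ terms with $j\leq s-1$ yields $\tau(v_0,\ldots,v_s)$ after one application each of (iii), (i), (ii), and the remaining pieces supply $2\tau(v_0,\ldots,v_s)$, giving $\delta T=\tau$. (One small remark: the paper's typing $Mult^{s+1}(V,W)$ notwithstanding, the formula for $\tau_{A_1,\ldots,A_s}$ is scalar-valued, so the scalar Koszul complex is the right one; and in any case tensoring the Koszul complex with a fixed vector space preserves exactness.)
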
 

\begin{proof} For $k=0,\ldots,s-1$ write \[ \tau^{(k)}(v_0,\ldots,v_s)=\frac{1}{k!(s-1-k)!}\sum_{\sigma\in S_{s-1}}\omega\left(A_{k+1}(v_0,v_{\sigma(1)},\ldots,v_{\sigma(k)}),A_{s-k}(v_{\sigma(k+1)},\ldots,v_{\sigma(s-1)},v_{s})\right),\] so that
\[ \tau_{A_1,\ldots,A_s}=\sum_{k=0}^{s-1}\tau^{(k)}.\]  First we show that for each $k$, $\tau^{(k)}+\tau^{(s-1-k)}$ obeys conditions (i), (ii), and (iii), which will obviously show the same for $\tau_{A_1,\ldots,A_s}$.  

That condition (ii) (symmetry in the arguments $v_1,\ldots,v_{s-1}$) holds for each individual $\tau^{(k)}$ is immediate from the definition.  That condition (i) (antisymmetry in the arguments $v_0$ and $v_s$) holds for $\tau^{(k)}+\tau^{(s-1-k)}$ follows quickly from the antisymmetry of $\omega$ and the symmetry of the $A_j$: each term \[ \omega\left(A_{k+1}(v_0,v_{i_1},\ldots,v_{i_k}),A_{s-k}(v_{j_1},\ldots,v_{j_{s-k-1}},v_s)\right) \] that appears in the sum defining $\tau^{(k)}(v_0,v_1,\ldots,v_{s-1},v_s)$ has a corresponding term \[ \omega\left(A_{s-k}(v_s,v_{j_1},\ldots,v_{j_{s-k-1}}),A_{k+1}(v_{i_1},\ldots,v_{i_k},v_0)\right) \] that appears in the sum defining $\tau^{(s-1-k)}(v_s,v_1,\ldots,v_{s-1},v_0)$ (and vice versa), and these terms are opposite to each other since $\omega$ is antisymmetric while $A_{k+1},A_{s-k}$ are symmetric.

We now show that $\tau^{(k)}+\tau^{(s-k-1)}$ obeys property (iii) (concerning the effect of cyclically permuting the arguments $v_0,v_1,v_s$ while leaving the others fixed).  We see that
 \begin{align} \nonumber (&\tau^{(k)}+\tau^{(s-k-1)})(v_0,v_1,v_2,\ldots,v_{s-1},v_{s})=\\ \nonumber  &\sum_{\substack{ \scriptscriptstyle{\{1,\ldots,s-1\}=}\\ \scriptscriptstyle{ \{i_1,\ldots,i_{k}\}\coprod\{j_1,\ldots,j_{s-k-1}\}}}}\scriptstyle{\left(\omega(A_{k+1}(v_0,v_{i_1},\ldots,v_{i_k}),A_{s-k}(v_{j_1},\ldots,v_{j_{s-k-1}},v_s))+\omega(A_{s-k}(v_0,v_{j_1},\ldots,v_{j_{s-k-1}}),A_{k+1}(v_{i_1},\ldots,v_{i_k},v_s))\right)}\\&  \nonumber =\sum_{\substack{\scriptscriptstyle{\{2,\ldots,s-1\}=}\\ \scriptscriptstyle{ \{i_1,\ldots,i_{k}\}\coprod}\\ \scriptscriptstyle{ \{m_1,\ldots,m_{s-k-2}\}}}}\scriptstyle{\left(\omega(A_{k+1}(v_0,v_{i_1},\ldots,v_{i_k}),A_{s-k}(v_1,v_{m_1},\ldots,v_{m_{s-k-2}},v_s))+\omega(A_{s-k}(v_{0},v_1,v_{m_1},\ldots,v_{m_{s-k-2}}),A_{k+1}(v_{i_1},\ldots,v_{i_k},v_s))\right)} \\ \label{longeq}  & \quad +\sum_{\substack{ \scriptscriptstyle{\{2,\ldots,s-1\}=}\\ \scriptscriptstyle{ \{n_1,\ldots,n_{k-1}\}\coprod }\\ \scriptscriptstyle{ \{j_1,\ldots,j_{s-k-1}\}}}}\scriptstyle{\left(
\omega(A_{k+1}(v_0,v_1,v_{n_1},\ldots,v_{n_{k-1}}),A_{s-k}(v_{j_1},\ldots,v_{j_{s-k-1}},v_s))+\omega(A_{s-k}(v_{0},v_{j_1},\ldots,v_{j_{s-k-2}}),A_{k+1}(v_1,v_{n_1},\ldots,v_{n_{k-1}},v_s))\right)}\end{align}
where we have separated the partitions $\{1,\ldots,s-1\}=\{i_1,\ldots,i_k\}\coprod\{j_1,\ldots,j_{s-k-1}\}$ according to whether $1$ belongs to the first or the second of the two subsets.  Fix a partition $\{2,\ldots,s-1\}=\{i_1,\ldots,i_k\}\coprod\{m_1,\ldots,m_{s-k-2}\}$ and consider the effect of cyclically permuting $v_0,v_1,v_s$ in the term corresponding to this partition in the first line of the right hand side of (\ref{longeq}).  Summing over these cyclic permutations yields (where $\ldots$ represents $i_1,\ldots,i_k$ or $m_1,\ldots,m_{s-k-2}$, as appropriate, and where we freely use the symmetry of the $A_j$) \begin{align*} & \omega\left(A_{k+1}(v_0,\ldots),A_{s-k}(v_1,v_s,\ldots)\right)  +\omega\left(A_{s-k}(v_0,v_1,\ldots),A_{k+1}(v_s,\ldots)\right)\\  +&\omega\left(A_{k+1}(v_s,\ldots),A_{s-k}(v_0,v_1,\ldots)\right)+\omega\left(A_{s-k}(v_s,v_0,\ldots),A_{k+1}(v_1,\ldots)\right)\\ +&\omega\left(A_{k+1}(v_1,\ldots),A_{s-k}(v_s,v_0,\ldots)\right)+\omega\left(A_{s-k}(v_1,v_s,\ldots),A_{k+1}(v_0,\ldots)\right),\end{align*} which vanishes, as the first and sixth; second and third; and fourth and fifth terms cancel.  This shows that the terms coming from the first line of the right hand side of (\ref{longeq}) in \[ (\tau^{(k)}+\tau^{(s-k-1)})(v_0,v_1,\ldots,v_s)+(\tau^{(k)}+\tau^{(s-k-1)})(v_1,v_s,\ldots,v_0)+(\tau^{(k)}+\tau^{(s-k-1)})(v_s,v_0,\ldots,v_1)\] sum to zero, and an identical argument applies to the second line of (\ref{longeq}).  This proves property (iii), both for $\tau^{(k)}+\tau^{(s-k-1)}$ and for the original $\tau_{A_1,\ldots,A_k}$.

We now prove (iv).  Since the only terms in $\tau_{A_1,\ldots,A_{s}}=\sum_{k=0}^{s-1}\tau^{(k)}$ which depend on $A_s$ are those corresponding to $k=0,s-1$, and since the terms corresponding to $k=0,s-1$ depend linearly on $A_s$ (with $A_1$ fixed), it suffices to show that, for fixed injective $A_1$, the map $A_s\mapsto \tau^{(0)}+\tau^{(s-1)}$ is a surjection to $\mathcal{T}_s(V,W)$.  (That this map takes values in $\mathcal{T}_s(V,W)$ follows from what we have already done in this proof.)  Note that \begin{equation}\label{0s} (\tau^{(0)}+\tau^{(s-1)})(v_0,v_1,\ldots,v_s)=\omega\left(A_1v_0,A_{s}(v_1,\ldots,v_s)\right)+\omega\left(A_s(v_0,\ldots,v_{s-1}),A_1v_s\right).\end{equation}

Let $\{e_1,\ldots,e_n\}$ be a basis for $V$, and consider an arbitrary $\tau\in \mathcal{T}_s(V,W)$.  Of course $\tau$ is determined by its values on tuples $e_{i_{\sigma(1)}},\ldots,e_{i_{\sigma(s+1)}}$, where  $(i_1,\ldots,i_{s+1})$ varies over tuples with $i_1\leq i_2\leq\cdots\leq i_{s+1}$, and where $\sigma$ varies over $S_{s+1}$.  In fact, though, for a fixed such tuple $(i_1,\ldots,i_{s+1})$, all of the $\tau(e_{i_{\sigma(1)}},\ldots,e_{i_{\sigma(s+1)}})$ are determined by the values \begin{equation}\label{1j} \tau(e_{i_1},e_{i_2},\ldots,\widehat{e_{i_j}},\ldots,e_{i_{s+1}},e_{i_j})\end{equation} where $2\leq j\leq s+1$ varies through indices such that $i_1\neq i_j$ and the $\,\widehat{}\,$ denotes omission.  Indeed if $i_1=i_j$ then (\ref{1j}) vanishes by condition (i), while more generally repeated application of properties (i)-(iii) will express $\tau(e_{i_{\sigma(1)}},\ldots,e_{i_{\sigma(s+1)}})$ in terms of expressions of the form (\ref{1j}) for appropriate $j$; for instance one has \begin{align*} \tau(e_{i_j},e_{i_1},\ldots,\widehat{e_{i_j}},\ldots,\widehat{e_{i_k}},\ldots,e_{i_k})=\tau(e_{i_1}&,\ldots,e_{i_j},\ldots,\widehat{e_{i_k}},\ldots,e_{i_{s+1}},e_{i_k})\\&-\tau(e_{i_1},\ldots,\widehat{e_{i_j}},\ldots,e_{i_k},\ldots,e_{i_{s+1}},e_{i_j}).\end{align*}
So it suffices to show that for a fixed injective $A_1$ and for a fixed $(i_1,\ldots,i_{s+1})$ with $i_1\leq\cdots\leq i_{s+1}$ and for any $j$ with $i_j\neq i_1$, the $s$-linear map $A_s$ may be chosen so that $(\tau^{(0)}+\tau^{(s-1)})(e_{i_1},\ldots,\widehat{e_{i_k}},\ldots,e_{i_{s+1}},e_{i_k})$ is nonzero iff $i_j=i_k$, and so that $(\tau^{(0)}+\tau^{(s-1)})(e_{m_1},\ldots,e_{m_{s+1}})=0$ whenever the indices $m_k$ cannot be reordered to coincide with the indices $i_k$ (since $\mathcal{T}_s(V,W)$ is spanned by maps having these properties as $(i_1,\ldots,i_{s+1})$ and $j$ vary).  But this is easily accomplished.  Choose the symmetric $s$-linear map $A_s\co V^s\to W$ so that $A_s(e_{m_1},\ldots,e_{m_s})=0$ if and only if  $(e_{m_1},\ldots,e_{m_s})$ is not a reordering of $(e_{i_1},\ldots,\widehat{e_{i_{j}}},\ldots,e_{i_{s+1}})$, and so that $A_s(e_{i_1},\ldots,\widehat{e_{i_{j}}},\ldots,e_{i_{s+1}})$ is $\omega$-orthogonal to $A_1e_m$ for all $m\neq i_j$ but is not $\omega$-orthogonal to $A_1e_{i_j}$ (here of course we use the nondegeneracy of $\omega$ and the injectivity of $A_1$).  As the reader may easily verify using (\ref{0s}),  this choice of $A_s$ results in $\tau^{(0)}+\tau^{(s-1)}$ satisfying the desired properties, completing the proof of (iv).

Finally, consider (v). The left-hand side is equal to:
\begin{align*} \sum_{m=1}^{s-1} &\left.\frac{d}{dt}\right|_{t=0}\tau_{A_{1},\ldots,A_{m-1},A_m+tA_{m+1}(v_s,\cdot),A_{m+1},\ldots,A_{s-1}}(v_0,\ldots,v_{s-1})\\&=\sum_{m=1}^{s-1}\sum_{\substack{\scriptscriptstyle{\{1,\ldots,s-2\}=}\\ \scriptscriptstyle{ \{i_1,\ldots,i_{m-1}\}\coprod\{j_1,\ldots,j_{s-m-1}\}}}}\Big{(}\omega\left(A_{m+1}(v_s,v_0,v_{i_1},\ldots,v_{i_{m-1}}),A_{s-m}(v_{j_1},\ldots,v_{j_{s-m-1}},v_{s-1})\right)+ \\& \qquad \qquad \qquad  \qquad \qquad \qquad \qquad  \qquad\omega\left(A_{s-m}(v_0,v_{j_1},\ldots,v_{j_{s-m-1}}),A_{m+1}(v_s,v_{i_1},\ldots,v_{i_{m-1}},v_{s-1})  \right)\Big{)}.\end{align*}  But, in view of the symmetry of the $A_j$, this is just equal to $\tau_{A_1,\ldots,A_s}(v_0,\ldots,v_{s-2},v_{s},v_{s-1})$, as can be seen by sorting the terms that appear in (\ref{tauset}) according to whether the second-to-last argument of $\tau_{A_1,\ldots,A_s}$ is among the $v_{i_k}$ or among the $v_{j_k}$.
\end{proof}

\begin{lemma}\label{keyproj} Where $\mathcal{T}_s(V,W)$ is as defined in Lemma \ref{tauprop}(iv), there exists a surjective linear projection $\Pi\co Mult^{s+1}(V,W)\to \mathcal{T}_s(V,W)$ having the following property.  If $\eta\in \mathcal{T}_s(V,W)$ and $S\leq V$ are such that $\eta(v_0,\ldots,v_s)=0$ for all $v_0,\ldots,v_s\in S$, then it also holds that $(\Pi\eta)(v_0,\ldots,v_s)=0$ for all $v_0,\ldots,v_s\in S$.
\end{lemma}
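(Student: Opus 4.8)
The plan is to obtain $\Pi$ as a $GL(V)$-equivariant projection, after which the required vanishing property becomes automatic. Here $GL(V)$ acts on $Mult^{s+1}(V,W)\cong (V^*)^{\otimes(s+1)}\otimes W$ by $(g\cdot\tau)(v_0,\dots,v_s)=\tau(g^{-1}v_0,\dots,g^{-1}v_s)$, the factor $W$ carrying the trivial action. First I would note that $\mathcal{T}_s(V,W)$ is a $GL(V)$-submodule: each of the defining conditions (i), (ii), (iii) of Lemma \ref{tauprop} equates the value of $\tau$ on a tuple $(v_0,\dots,v_s)$ to a signed sum of its values on permutations of that tuple, and any such identity is preserved if one precomposes every argument with a fixed $g^{-1}$. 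Next, since $GL(V)$ is reductive and $Mult^{s+1}(V,W)$ is a finite-dimensional rational representation of it, it is completely reducible — equivalently, this follows from Schur--Weyl duality applied to $(V^*)^{\otimes(s+1)}$ — so the submodule $\mathcal{T}_s(V,W)$ admits a $GL(V)$-invariant complement $\mathcal{C}$. Take $\Pi$ to be the projection onto $\mathcal{T}_s(V,W)$ with kernel $\mathcal{C}$; it is linear, surjective, restricts to the identity on $\mathcal{T}_s(V,W)$, and is $GL(V)$-equivariant.

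The key step is then to show that \emph{any} $GL(V)$-equivariant linear endomorphism $\Pi$ of $Mult^{s+1}(V,W)$ maps the subspace $M_S:=\{\eta \mid \eta(v_0,\dots,v_s)=0 \text{ whenever all } v_i\in S\}$ into itself, for every linear subspace $S\leq V$. Fix such an $S$, choose a complement $U$ with $V=S\oplus U$, and for $t>0$ let $g_t\in GL(V)$ act as the identity on $S$ and as multiplication by $t$ on $U$. For $\eta\in M_S$, expanding $(g_t\cdot\eta)(v_0,\dots,v_s)=\eta(g_t^{-1}v_0,\dots,g_t^{-1}v_s)$ multilinearly in the decompositions $v_i=\sigma_i+u_i$ (with $\sigma_i\in S$, $u_i\in U$) shows that every term carries a positive power of $t^{-1}$ except the term $\eta(\sigma_0,\dots,\sigma_s)$, which vanishes because $\eta\in M_S$; hence $g_t\cdot\eta\to 0$ as $t\to\infty$. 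By continuity of $\Pi$ and equivariance, $g_t\cdot(\Pi\eta)=\Pi(g_t\cdot\eta)\to 0$. But for any $s_0,\dots,s_s\in S$ one has $(g_t\cdot\Pi\eta)(s_0,\dots,s_s)=(\Pi\eta)(g_t^{-1}s_0,\dots,g_t^{-1}s_s)=(\Pi\eta)(s_0,\dots,s_s)$, which is independent of $t$ and therefore must equal $0$; thus $\Pi\eta\in M_S$. Applying this to the $\Pi$ constructed above proves the lemma — in fact it gives the conclusion for every $\eta\in Mult^{s+1}(V,W)$, not merely for $\eta\in\mathcal{T}_s(V,W)$, for which the assertion is anyway immediate since $\Pi$ fixes $\mathcal{T}_s(V,W)$ pointwise.

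The substantive point is the recognition in the second paragraph that $GL(V)$-equivariance is exactly what is needed: it converts the delicate-looking demand that $\Pi$ respect the vanishing locus of \emph{every} subspace $S$ simultaneously into a single representation-theoretic condition, after which the construction of $\Pi$ is routine. (One could instead try to write $\Pi$ down by hand as a composition of symmetrization over the middle $s-1$ arguments, antisymmetrization in the outer two arguments, and a correction term enforcing identity (iii); operators assembled from permutations of the arguments and linear combinations thereof evidently preserve each $M_S$, but checking that such a composition is genuinely idempotent with image precisely $\mathcal{T}_s(V,W)$ is awkward, because the operator enforcing (iii) fails to commute with symmetrization in the middle arguments. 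The equivariance argument sidesteps this bookkeeping.)
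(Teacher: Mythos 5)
Your proof is correct and proves the stronger statement that matters for the application in Lemma~\ref{trn} (where $\Pi$ is applied to $\mathcal{F}_{\Psi}\tilde{\tau}^{\Psi,m}$, which need not lie in $\mathcal{T}_{m+1}$), and your parenthetical remark that the lemma as literally stated is trivial for a projection that fixes $\mathcal{T}_s(V,W)$ is a fair observation. But the route is genuinely different from the paper's. The paper works entirely with the finite group $\mathfrak{S}_{s+1}$ acting by permuting arguments: it describes $\mathcal{T}_s(V,W)$ as the annihilator of a left ideal $I\subset\R\mathfrak{S}_{s+1}$, uses the $\mathfrak{S}_{s+1}$-invariant inner product on the group algebra (a Maschke-style argument) to split $1=e+e^{\perp}$ into complementary idempotents, and sets $\Pi(\tau)=e^{\perp}\cdot\tau$. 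Once $\Pi$ is exhibited as a fixed linear combination of permutation operators, the preservation of $M_S$ is immediate with no limiting argument. Your proof instead obtains $\Pi$ from complete reducibility of the $GL(V)$-action on $(V^*)^{\otimes(s+1)}\otimes W$ and then derives the $M_S$-preservation as a consequence of $GL(V)$-equivariance via a degenerating-diagonal limit $g_t$. The two points of view are of course related by Schur--Weyl duality: $GL(V)$-equivariant endomorphisms of $(V^*)^{\otimes(s+1)}$ are exactly the image of $\R\mathfrak{S}_{s+1}$ acting by permutations, so your $\Pi$ is in fact a permutation combination and the paper's $\Pi$ is in fact $GL(V)$-equivariant, and each verification argument applies to the other's projection. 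The paper's construction has the advantage of being elementary and self-contained (only an inner product on a finite group algebra is needed) and of making the key property a one-line observation; yours has the advantage of isolating the conceptual reason the property holds---equivariance under the general linear group forces compatibility with every flag of subspaces---at the price of invoking reductivity of $GL(V)$ (or Schur--Weyl) and a scaling limit.
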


\begin{proof}
Let $\mathfrak{S}_{s+1}$ denote the symmetric group on the $(s+1)$-element set $\{0,\ldots,s\}$.  For $0\leq i\leq s-1$ let $t_i\in \mathfrak{S}_{s+1}$ denote the transposition which interchanges $i$ and $i+1$.  Also let $t_s$ denote the transposition which interchanges $0$ and $s$, and let $u$ denote the permutation which cyclically permutes $0,1,$ and $s$ and leaves the other elements of $\{0,\ldots,s\}$ unchanged.

We have a left action of $\mathfrak{S}_{s+1}$ on $Mult^{s+1}(V,W)$ by \[ \sigma\cdot\tau(v_0,\ldots,v_s)=\tau(v_{\sigma^{-1}(0)},\ldots,v_{\sigma^{-1}(s)}).\]  Where $\R\mathfrak{S}_{s+1}$ denotes the group $\R$-algebra of $\mathfrak{S}_{s+1}$, the action of $\mathfrak{S}_{s+1}$ on $Mult^{s+1}(V,W)$ extends in the obvious way to a left action of the algebra $\R\mathfrak{S}_{s+1}$ on $Mult^{s+1}(V,W)$.  In these terms we have by definition \[ \mathcal{T}_s(V,W)=\left\{\tau\in Mult^{s+1}(V,W)  \left|\begin{array}{c} (1-t_i)\cdot\tau=0 \mbox{ for }1\leq i\leq s-2   \\ (1+t_s)\cdot\tau=(1+u+u^2)\cdot\tau=0 \end{array} \right.\right\}
\]

Let $I$ denote the left ideal in $\R\mathfrak{S}_{s+1}$ generated by the elements $1-t_i$ for $1\leq i\leq s-2$, $1+t_s$, and $1+u+u^2$. (In other words, $I$ consists of elements of the form $\sum_{i=1}^{s-2}x_i(1-t_i)+y(1+t_s)+z(1+u+u^2)$.)   We then evidently have \begin{equation}\label{tid} \mathcal{T}_{s}(V,W)=\{\tau\in Mult^{s+1}(V,W)|(\forall x\in I)(x\cdot\tau=0)\}.\end{equation}

Now $\R\mathfrak{S}_{s+1}$ carries a $\mathfrak{S}_{s+1}$-invariant inner product $\langle\cdot,\cdot\rangle$ defined by \[\left\langle \sum_{\sigma\in\mathfrak{S}_{s+1}}a_{\sigma}\sigma,\sum_{\sigma\in\mathfrak{S}_{s+1}}b_{\sigma}\sigma\right\rangle=\sum_{\sigma\in\mathfrak{S}_{s+1}}a_{\sigma}b_{\sigma}. \]  Let $I^{\perp}$ denote the orthogonal complement of $I$ with respect to this inner product.  The facts that $\langle\cdot,\cdot\rangle$ is $\mathfrak{S}_{s+1}$-invariant and that $I$ is a left ideal readily imply that $I^{\perp}$ is a left ideal.  Of course there is a direct sum splitting of vector spaces $\R\mathfrak{S}_{s+1}=I\oplus I^{\perp}$, so where $1$ is the multiplicative identity in $\R\mathfrak{S}_{s+1}$ we may write $1=e+e^{\perp}$ where $e\in I$ and $e^{\perp}\in I^{\perp}$.  Now for any $x\in \R\mathfrak{S}_{s+1}$ we have \[ x=x(e+e^{\perp})=xe+xe^{\perp} \] where $xe\in I$ and $xe^{\perp}\in I^{\perp}$.  So if $x\in I$ then $xe=x$ and $xe^{\perp}=0$, while if $x\in I^{\perp}$ then $xe=0$ and $xe^{\perp}=x$.  In particular applying this with $x$ equal to $e$ or $e^{\perp}$ shows that $e^2=e$, $(e^{\perp})^2=e^{\perp}$, and $ee^{\perp}=e^{\perp}e=0$.

If $\tau\in\mathcal{T}_{s}(V,W)$ we have $e\cdot\tau=0$ and hence $e^{\perp}\cdot\tau=(e+e^{\perp})\cdot\tau=\tau$.  

If $\tau\in Mult^{s+1}(V,W)$, and $x\in I$, since $x=xe$ and $ee^{\perp}=0$ we have \[ x(e^{\perp}\cdot\tau)=(xee^{\perp})\cdot\tau=0.\]
Thus by (\ref{tid}) we have $e^{\perp}\cdot \tau\in \mathcal{T}_s(V,W)$ for all $\tau\in Mult^{s+1}(V,W)$.

Accordingly we may define $\Pi\co Mult^{s+1}(V,W)\to \mathcal{T}_s(V,W)$ by $\Pi(\tau)=e^{\perp}\cdot\tau$.  The last two paragraphs together with the fact that  $(e^{\perp})^2=e^{\perp}$ imply that $\Pi$ is a surjective projection.  The fact that $\Pi$ is given by the action of an element of $\R\mathfrak{S}_{s+1}$ immediately implies that $\Pi$ has the property stated in the lemma: if $\eta\in Mult^{s+1}(V,W)$ vanishes on all tuples consisting of elements of the subspace $S\leq V$, then for $v_0,\ldots,v_s\in S$, $(\Pi\eta)(v_0,\ldots,v_s)$ is a linear combination of various terms obtained by first permuting the $v_i$ and then applying $\eta$, and all of these terms are $0$ by the assumption on $\eta$.
\end{proof}

If $A\co V\to W$ is a linear map, we obtain a skew-symmetric bilinear form $A^*\omega$ on $V$.  Associated to the form $\omega$ on $W$ is a linear map $J\co W\to W^*$ defined by the property that $(Jw_1)(w_2)=\omega(w_1,w_2)$.  The linear map $V\to V^*$ which is similarly associated to the bilinear form $A^*\omega$ on $V$ is then $A^*JA$, where $A^*\co W^*\to V^*$ is the transpose of $A$.  Since we assume that $\omega$ is nondegenerate, $J$ is invertible.  On the other hand $A^*\omega$ is typically degenerate; its kernel (\emph{i.e.} the space of those $v$ such that $(A^*\omega)(v,\cdot)\in V^*$ is zero) is the same as the kernel of the linear map $A^*JA$.  If we assume that $A$ is injective, so that $A^*$ is surjective, then it is easy to see that the kernel of $A^*\omega$ has dimension no larger than $\dim W-\dim V$.

\begin{prop}\label{KC} Let $Hom(V,W)$ be the space of linear maps and $Mon(V,W)$ the space of injective linear maps from $V$ to $W$, and let $c$ be a positive integer.  Then \[ \mathcal{K}_c=\{A\in Mon(V,W)|\dim(\ker(A^*\omega))=c\}\] is a submanifold of $Mon(V,W)$ with codimension equal to $\frac{c(c-1)}{2}$.  Moreover for $A\in \mathcal{K}_c$ the tangent space to $\mathcal{K}_c$ at $A$ is given by \[ T_A\mathcal{K}_c=\{B\in Hom(V,W)|\omega(Av_1,Bv_2)+\omega(Bv_1,Av_2)=0\mbox{ for all }v_1,v_2\in \ker A^*\omega\},\] where we use the fact that $Mon(V,W)$ is an open subset of the vector space $Hom(V,W)$ to identify $T_AMon(V,W)$ canonically with $Hom(V,W)$.
\end{prop}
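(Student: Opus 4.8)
The plan is to realize $\mathcal{K}_c$ locally, near an arbitrary point $A_0\in\mathcal{K}_c$, as the zero set of an explicit submersion onto a vector space of dimension $\frac{c(c-1)}{2}$, and then read off the tangent space as its kernel. Write $K_0=\ker(A_0^{*}\omega)$, so $\dim K_0=c$, and choose a complement $U$ with $V=K_0\oplus U$. Since $K_0$ is exactly the kernel of $A_0^{*}\omega$, the restriction $(A_0^{*}\omega)|_{U\times U}$ is nondegenerate, and this persists on a neighborhood $\mathcal{V}$ of $A_0$ in $Mon(V,W)$ (nondegeneracy is an open condition). For $A\in\mathcal{V}$, decompose the form $A^{*}\omega$ into its $K_0\times K_0$, $K_0\times U$, and $U\times U$ blocks; a Schur-complement computation shows that $\dim\ker(A^{*}\omega)$ equals the dimension of the kernel of a certain skew form $\widetilde{P}_A$ on the $c$-dimensional space $K_0$, and that $\widetilde{P}_A$ is zero precisely when $\dim\ker(A^{*}\omega)=c$. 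Thus $\mathcal{K}_c\cap\mathcal{V}=F^{-1}(0)$ for the smooth map $F\co\mathcal{V}\to\Lambda^2 K_0^{*}$ given by $F(A)=\widetilde{P}_A$, where $\dim\Lambda^2 K_0^{*}=\frac{c(c-1)}{2}$.

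It then remains to verify that $F$ is a submersion at $A_0$. This is where the block decomposition pays off: because $K_0=\ker(A_0^{*}\omega)$, both the $K_0\times K_0$ and $K_0\times U$ blocks of $A_0^{*}\omega$ vanish, so the quadratic correction in the Schur complement contributes nothing to the first derivative, and one finds that $dF_{A_0}(B)$ is simply the restriction to $K_0\times K_0$ of the bilinear form $(v_1,v_2)\mapsto\omega(Bv_1,A_0v_2)+\omega(A_0v_1,Bv_2)$. So the remaining point is to show that for every skew form $\beta$ on $K_0$ there exists $B\in Hom(V,W)$ with $\omega(Bk_1,A_0k_2)+\omega(A_0k_1,Bk_2)=\beta(k_1,k_2)$ for all $k_1,k_2\in K_0$.

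This surjectivity is the crux of the proof and the place where injectivity of $A_0$ genuinely enters (note also that $A_0(K_0)$ is an isotropic subspace of $(W,\omega)$, since $\omega(A_0k_1,A_0k_2)=(A_0^{*}\omega)(k_1,k_2)=0$). Choose a basis $e_1,\dots,e_c$ of $K_0$; since $A_0$ is injective the vectors $A_0e_1,\dots,A_0e_c$ are linearly independent, so nondegeneracy of $\omega$ provides $f_1,\dots,f_c\in W$ with $\omega(A_0e_i,f_j)=\delta_{ij}$. Setting $Be_i=-\tfrac12\sum_m\beta(e_i,e_m)f_m$ and extending $B$ over $U$ arbitrarily, a one-line computation gives $\omega(Be_i,A_0e_l)+\omega(A_0e_i,Be_l)=\beta(e_i,e_l)$, which is exactly what we want. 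Hence $dF_{A_0}$ is onto $\Lambda^2 K_0^{*}$, so by the implicit function theorem $\mathcal{K}_c$ is, near $A_0$, a submanifold of $Mon(V,W)$ of codimension $\frac{c(c-1)}{2}$, with $T_{A_0}\mathcal{K}_c=\ker dF_{A_0}=\{B\in Hom(V,W)\mid\omega(A_0v_1,Bv_2)+\omega(Bv_1,A_0v_2)=0\text{ for all }v_1,v_2\in K_0\}$; since $A_0\in\mathcal{K}_c$ was arbitrary this is the claimed statement. The only non-routine step is the surjectivity of $dF_{A_0}$; the Schur-complement reduction and the derivative computation are straightforward linear algebra.
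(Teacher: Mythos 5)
Your proof is correct, and it rests on the same essential idea as the paper's -- a Schur-complement reduction that exhibits the defining condition $\dim\ker(A^*\omega)=c$ locally as the vanishing of a skew form on the $c$-dimensional kernel -- but the organization differs. The paper factors through $Sk(V)$: it proves that $\Omega\colon A\mapsto A^*JA$ is a submersion from $Mon(V,W)$ onto $Sk(V)$ (the surjectivity argument there is a cleaner use of injectivity, choosing $C$ with $C(JA)=\mathrm{id}_V$ so that $B=C^*D$ maps to $D^*-D$), and then separately proves that $Q_c=\{B\in Sk(V)\mid\dim\ker B=c\}$ is a codimension-$\frac{c(c-1)}{2}$ submanifold of $Sk(V)$ via the Schur complement; $\mathcal{K}_c=\Omega^{-1}(Q_c)$ then does the rest. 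You instead build the local submersion $F\colon\mathcal{V}\to\Lambda^2K_0^*$ directly on a neighborhood of $A_0$ in $Mon(V,W)$ and verify surjectivity of $dF_{A_0}$ by an explicit basis construction (your $Be_i=-\tfrac12\sum_m\beta(e_i,e_m)f_m$ computation is correct). The two routes prove the same thing; the paper's detour through $Sk(V)$ has the mild practical advantage that the intermediate objects $Q_c$ and its tangent description $Sk^{\ker B}(V)$, together with the block coordinates and the map $D(Z)$, are re-used verbatim in Addendum \ref{add}, whereas your one-shot argument is a bit more streamlined if one only wants Proposition \ref{KC} itself.
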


\begin{proof} If $U$ is a finite-dimensional vector space let $Sk(U)$ denote the vector space of skew-symmetric linear maps $L\co U\to U^*$ (in other words, maps that, under the canonical identification of $U^{**}$ with $U$, obey $L^*=-L$; of course these are the maps that, when represented by matrices in terms of a basis for $U$ and the corresponding dual basis for $U^*$, are given by skew-symmetric matrices).

Let \[ Q_c=\{B\in Sk(V)|\dim\ker B=c\}\] and for 
 any subspace $Y\leq V$, let \[  Sk^Y(V)=\{B\in Sk(V)|(By_1)(y_2)=0\mbox{ for all }y_1,y_2\in Y\}.\]
 
 \begin{lemma}\label{qc}  $Q_c$ is a submanifold of $Sk(V)$, with codimension $\frac{c(c-1)}{2}$.  Moreover the tangent space at $B\in Q_c$ is given by \[ T_{B}Q_c=Sk^{\ker B}(V) \] (where we use the vector space structure on $Sk(V)$ to identify $T_BSk(V)$ with $Sk(V)$).\end{lemma}
 
\begin{proof}  Let $B\in Q_c$ and write $Y^0=\ker B$, so $\dim Y^0=c$.  Choose a complement $Y^1$ to $Y^0$ in $V$.  The splitting $V=Y^0\oplus Y^1$ determines a splitting $V^*=(Y^{0})^{*}\oplus (Y^{1})^{*}$ (where, \emph{e.g.}, elements of $(Y^{0})^{*}$ are extended by zero on the $Y^1$ summand to obtain elements of $V^*$), and any $C\in Sk(V)$ can then be written in ``block form'' as $C=\left(\begin{array}{cc} C_{00} & C_{01} \\ C_{10} & C_{11}\end{array}\right)$ where $C_{ij}\co Y^j\to (Y^i)^*$ and where $C_{ij}^{*}=-C_{ji}$.
The fact that $B$ vanishes  on $Y^0$ shows that $B_{00}=0$ and $B_{10}=0$, and hence also by skew-symmetry $B_{01}=0$.  So since $B$ is injective on $Y^1$ the lower right block $B_{11}$ must be invertible.  Let $U$ denote the open subset of $Sk(V)$ consisting of skew-symmetric maps $B'$ whose lower right blocks $B'_{11}$ are invertible, so $U$ is an open neighborhood of $B$ and it suffices to show that $Q_{c}\cap U$ is a submanifold of $U$ with codimension and tangent space at $B$ as asserted in the statement of the lemma.    

Let $\pi_0$ be the projection $V\to Y^0$ given by the direct sum splitting $V=Y^0\oplus Y^1$.
Now any $B'\in U$ restricts injectively to $Y^1$ and so the restriction of $\pi_0$ to $\ker B'$ is injective. Assuming that $B'\in U$, we have $B'\in Q_c\cap U$ if and only if $\pi_0|_{\ker B'}$ is an isomorphism to $Y^0$, which in turn holds if and only if there is a linear map $D\co Y^0\to Y^1$ so that $B'$ vanishes identically on the subspace $\{v+Dv|v\in Y^0\}$.  Writing \[ B'=\left(\begin{array}{cc} C_{00} & C_{01} \\ C_{10} & B_{11}+C_{11}\end{array}\right),\] the precise conditions on the $C_{ij}$ and on $D$ for this to occur are given by \begin{align*}  C_{00}+C_{01}D&=0\\ C_{10}+(B_{11}+C_{11})D&=0 \end{align*}  Since $B'$ is chosen from the open set $U$, the map $B_{11}+C_{11}$ is invertible, and so $D$ would have to be given by $D=-(B_{11}+C_{11})^{-1}C_{10}$.  So since $C_{01}=-C_{10}^{*}$ we see that \[ Q_c\cap U=\left\{\left.B'=\left(\begin{array}{cc} C_{00} & -C_{10}^{*} \\ C_{10} & B_{11}+C_{11}\end{array}\right)\right| \begin{array}{c}C_{00}\in Sk(Y^0),C_{11}\in Sk(Y^1),C_{10}\in Hom(Y^0,(Y^{1})^{*}),\\ C_{00}+C_{10}^{*}(B_{11}+C_{11})^{-1}C_{10}=0\end{array}\right\}.\]

The map \begin{equation} \label{sksub}
(C_{00},C_{10},C_{11})\mapsto C_{00}+C_{10}^{*}(B_{11}+C_{11})^{-1}C_{10} \end{equation} is obviously a submersion to $Sk(Y^0)$, and $Sk(Y^0)$ has dimension $\frac{c(c-1)}{2}$, so by the implicit function theorem this proves that $Q_c$ is a submanifold of $Sk(V)$ with codimension $\frac{c(c-1)}{2}$.  Moreover, the linearization of (\ref{sksub}) around $C_{ij}=0$ has kernel given precisely by those $\hat{C}$ with $\hat{C}_{00}=0$, and the condition that $\hat{C}_{00}=0$ amounts to the statement that $\hat{C}\in Sk^{Y_0}(V)$, proving that $T_{B}Q_c=Sk^{Y^0}(V)$.
\end{proof}
 
Resuming the proof of Proposition \ref{KC}, first note that the map $\Omega\co Mon(V,W)\to Sk(V)$ defined by $\Omega(A)=A^*JA$ is a submersion.  Indeed the linearization of this map at $A$ is given by $B\mapsto B^*JA+A^*JB=B^*(JA)-(JA)^*B$ where $JA\co V\to W^*$ is a monomorphism (here we use that $J^*=-J$ since $\omega$ is skew-symmetric).   Choosing $C\in Hom(W^*,V)$ so that $C(JA)$ is the identity on $V$, given any $D\in Hom(V,V^*)$ the element $C^*D\in Hom(V,W)$ will be sent by the linearization of $\Omega$ at $A$ to $D^*-D$.  So since any element of $Sk(V)$ can be written as $D^*-D$ for some $D\in Hom(V,V^*)$, $\Omega$ is indeed a submersion.

We now need simply note that we have \[ \mathcal{K}_c=\Omega^{-1}(Q_c),\] so since $Q_c$ is a submanifold of codimension $\frac{c(c-1)}{2}$ and $\Omega$ is a submersion, $\mathcal{K}_c$ is a submanifold of codimension $\frac{c(c-1)}{2}$.  Moreover for $A\in \mathcal{K}_c$ the tangent space to $\mathcal{K}_c$ at $A$ consists of those $B$ such that $B^*JA+A^*JB$ belongs to $T_{A^*JA}Q_c=Sk^{\ker(A^*JA)}(V)$.  Recalling that $\ker(A^*JA)=\ker A^*\omega$, this amounts to the condition that, for all $v_1,v_2\in \ker A^*\omega$, we have \[ 0=\left((B^*JA+A^*JB)(v_1)\right)(v_2)=\left(JAv_1\right)(Bv_2)+\left(JBv_1\right)(Av_2)= \omega(Av_1,Bv_2)+\omega(Bv_1,Av_2),\] as desired.
\end{proof}

\begin{add}\label{add} Fix a $c$-dimensional subspace $V^0\leq V$.  For any $A\in \mathcal{K}_c$ there is a neighborhood $U$ of $A$ in $Mon(V,W)$ and a smooth map \begin{align*} \Psi\co U\times V& \to V \\ (Z,v)&\mapsto \Psi_Z(v) \end{align*} such that for all $Z\in U$ the map $\Psi_Z\co V\to V$ is a linear isomorphism, and such that for all $Z\in U\cap \mathcal{K}_c$ we have $\Psi_Z(V^0)=\ker(Z^*\omega)$.
\end{add}

\begin{proof}
This basically follows from the discussion in the proof of Lemma \ref{qc}.  
  Let $Y^0=\ker A^*JA$, and choose a complement $Y^1$ to $Y^0$ in $V$.  As in the proof of Lemma \ref{qc} we can write $A^*JA$ in block form with respect to the splitting $V=Y^0\oplus Y^1$ as $A^*JA=\left(\begin{array}{cc}0 & 0 \\ 0 & B_{11}\end{array}\right)$ where $B_{11}\co Y^1\to (Y^1)^*$ is invertible.  Let $U$ denote the set of $Z\in Mon(V,W)$ such that the lower right block of $Z^*JZ$ with respect to the splitting $V=Y^0\oplus Y^1$ is invertible. For any $Z\in U$ define maps $C_{ij}(Z)\co Y^j\to (Y^i)^*$ by the property that \[ Z^*JZ=\left(\begin{array}{cc}C_{00}(Z) & C_{01}(Z) \\ C_{10}(Z) & B_{11}+C_{11}(Z)\end{array}\right).\] Then the $C_{ij}(Z)$ vary smoothly with $Z$ and it holds that $C_{ij}(Z)^{*}=-C_{ji}(Z)$, that $C_{ij}(A)=0$, and that $B_{11}+C_{11}(Z)$ is invertible.  As noted in the proof of Lemma \ref{qc}, given that $Z^*JZ$ takes the above form, if $\ker(Z^*JZ)$ is to have dimension $c$, then it must hold that $\ker(Z^*JZ)=\{v+D(Z)v|v\in Y^0\}$, where $D(Z)\co Y^0\to Y^1$ is given by the formula  \begin{equation}\label{dz}D(Z)=-(B_{11}+C_{11}(Z))^{-1}C_{10}(Z).\end{equation}

To construct the desired map $\Psi$, where $V^0\leq V$ is our given $c$-dimensional subspace, choose a complement $V^1$ to $V^0$ in $V$, and for $i=0,1$ fix linear isomorphisms $\psi_i\co V^i\to Y^i$.  Then define $\Psi\co U\times V\to V$ by \[ \Psi(Z,v_0+v_1)=\psi_0v_0+D(Z)\psi_0v_0+\psi_1v_1,\] where $D(Z)\co Y^0\to Y^1$ is given by the formula (\ref{dz}) (of course, this formula makes sense as long as $Z\in U$, whether or not $Z\in\mathcal{K}_c$).  This map is easily seen to satisfy the desired properties.
\end{proof}

For the rest of this subsection we will fix a smooth map $\Psi\co U_{\Psi}\times V\to V$ as in Addendum \ref{add}; thus $U_{\Psi}$ is an open set in $Mon(V,W)$, $V^0$ is a fixed $c$-dimensional subspace of $V$, and the maps $\Psi_Z=\Psi(Z,\cdot)\co V\to V$ are, for each $Z\in U_{\Psi}$, linear isomorphisms such that whenever $Z\in U_{\Psi}\cap \mathcal{K}_c$ we have $\Psi_Z(V^0)=\ker(Z^*JZ)$.

In general, for $\eta\in Mult^{s+1}(V,W)$ and $f\co S\to V$ a linear map from some  vector space $S$, $f^{*}\eta$ denotes the obvious pullback of $\eta$, \emph{i.e.}, $f^{*}\eta\in Mult^{s+1}(V,W)$ is given by $f^{*}\eta(x_0,\ldots,x_{s})=\eta(fx_0,\ldots,fx_s)$.

We will consider smooth maps  \begin{align*} \eta\co U_{\Psi}\times\prod_{k=2}^{s}Sym^k(V,W)&\to Mult^{s+1}(V,W)   
\\ (A_1,\ldots,A_s)&\mapsto \eta_{A_1,\ldots,A_s} 
\end{align*} 

The domain of such a map should be thought of as consisting of possible values of the derivatives at a point of order $1$ through $s$ of a function $f\co V\to W$ (with the first derivative constrained to lie in the open set $U_{\Psi}$ but the higher order derivatives allowed to vary freely).   

We associate to such a $\Psi$ and to any integer $s\geq 2$ a map \[ \mathcal{F}_{\Psi}\co C^{\infty}\left(U_{\Psi}\times\prod_{k=2}^{s}Sym^k(V,W),Mult^{s+1}(V,W)\right)\to C^{\infty}\left(U_{\Psi}\times\prod_{k=2}^{s+1}Sym^k(V,W),Mult^{s+2}(V,W)\right) \] defined by \begin{align*} (\mathcal{F}_{\Psi}\eta)_{A_1,\ldots,A_{s+1}}(v_0&,v_1,\ldots,v_{s-1},v,v_{s})=\\& \left.\frac{d}{dt}\right|_{t=0}\left((\Psi_{A_1+tA_2(v,\cdot)}\circ\Psi_{A_1}^{-1})^{*}\eta_{A_1+tA_2(v,\cdot),A_2+tA_3(v,\cdot),\ldots,A_s+tA_{s+1}(v,\cdot)}(v_0,v_1,\ldots,v_{s-1},v_{s})\right).\end{align*} 

To give some sort of motivation for this, note that if the $A_i$  are the $i$th order derivatives at a point of a function $f\co V\to W$, then $\left.\frac{d}{dt}\right|_{t=0}(A_1+tA_2(v,\cdot),A_2+tA_3(v,\cdot),\ldots,A_s+tA_{s+1}(v,\cdot))$ measures the rate of change of the first $s$ derivatives of $f$ as one moves in the direction $v$.  Thus $(\mathcal{F}_{\Psi}\eta)_{A_1,\ldots,A_{s+1}}(\cdot,v,\cdot)$ is a measurement of the change in $\eta$ for a function $f$ with derivatives $A_i$ as one moves in the direction $v$.  The pullback by $\Psi_{A_1+tA_2(v,\cdot)}\circ\Psi_{A_1}^{-1}$ is designed to compensate for the fact that the subspace $\ker(f^{*}\omega)_x$ will vary as $x\in V$ varies.

By the chain rule we have \begin{equation}\label{df} (\mathcal{F}_{\Psi}\eta)_{A_1,\ldots,A_{s+1}}(\cdot,v,\cdot)=\left.\frac{d}{dt}\right|_{t=0}\left(\eta_{A_1+tA_2(v,\cdot),A_2+tA_3(v,\cdot),\ldots,A_s+tA_{s+1}(v,\cdot)}+(\Psi_{A_1}^{-1})^{*}\Psi_{A_1+tA_2(v,\cdot)}^{*}\eta_{A_1,\ldots,A_s}\right)\end{equation}

Note the similarity of the first term  in (\ref{df}) to what appears in Lemma \ref{tauprop}(v), and also note that the second term is independent of $A_{s+1}$.

Now choose, for all $s\geq 2$, a projection $\Pi\co Mult^{s+1}(V,W)\to \mathcal{T}_{s}(V,W)$ as in Lemma \ref{keyproj}.  Define elements $\tilde{\tau}^{\Psi,s}\in C^{\infty}(U_{\Psi}\times\prod_{k=2}^{s}Sym^k(V,W),\mathcal{T}_s(V,W))$ inductively by setting, where $\tau_{A_1,A_2}$ is as defined before Lemma \ref{tauprop}, \[ \tilde{\tau}^{\Psi,2}_{A_1,A_2}=\tau_{A_1,A_2} \]  and, for $s\geq 2$, \[ \tilde{\tau}^{\Psi,s+1}_{A_1,\ldots,A_{s+1}}=\Pi\left(\mathcal{F}_{\Psi}\tilde{\tau}^{\Psi,s}\right)_{A_1,\ldots,A_{s+1}}.\]

\begin{remark}  Our purpose in including the projection $\Pi$ in the definition of $\tilde{\tau}^{\Psi,s}$ is to ensure that the map $(A_1,\ldots,A_s)\mapsto \tilde{\tau}^{\Psi,s}_{A_1,\ldots,A_{s}}$ has constant rank.
\end{remark}

\begin{lemma}\label{tausymb} For each $s\geq 2$ there is a $C^{\infty}$ map $g_s\co U_{\Psi}\times \prod_{k=2}^{s-1}Sym^k(V,W)\to \mathcal{T}_s(V,W)$ such that, for all $(A_1,A_2,\ldots,A_s)\in U_{\Psi}\times \prod_{k=2}^{s}Sym^k(V,W)$ we have \[ \tilde{\tau}^{\Psi,s}_{A_1,\ldots,A_{s}}=\tau_{A_1,\ldots,A_s}+g_s(A_1,\ldots,A_{s-1}).\]  Consequently for any fixed
$(A_1,A_2,\ldots,A_{s-1})\in U_{\Psi}\times \prod_{k=2}^{s-1}Sym^k(V,W)$ the map $A_{s}\mapsto \tilde{\tau}^{\Psi,s}_{A_1,\ldots,A_{s}}$ is an affine surjection from $Sym^{s}(V,W)$ to $\mathcal{T}_s(V,W)$.
\end{lemma}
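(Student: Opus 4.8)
The argument is an induction on $s$, and amounts to tracking the way the various pieces of $\mathcal{F}_\Psi$ depend on the top-order jet component. For $s=2$ there is nothing to prove: by definition $\tilde{\tau}^{\Psi,2}_{A_1,A_2}=\tau_{A_1,A_2}$, so one takes $g_2=0$ (here $\prod_{k=2}^{1}Sym^k(V,W)$ is a point, so $g_2$ is the zero map $U_\Psi\to\mathcal{T}_2(V,W)$, and $\tau_{A_1,A_2}\in\mathcal{T}_2(V,W)$ by Lemma \ref{tauprop}(i)--(iii)).

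For the inductive step, assume $\tilde{\tau}^{\Psi,s}_{A_1,\ldots,A_s}=\tau_{A_1,\ldots,A_s}+g_s(A_1,\ldots,A_{s-1})$ with $g_s\in C^\infty(U_\Psi\times\prod_{k=2}^{s-1}Sym^k(V,W),\mathcal{T}_s(V,W))$. Substitute this into $\tilde{\tau}^{\Psi,s+1}_{A_1,\ldots,A_{s+1}}=\Pi\left(\mathcal{F}_\Psi\tilde{\tau}^{\Psi,s}\right)_{A_1,\ldots,A_{s+1}}$, using formula (\ref{df}) for $\mathcal{F}_\Psi$. In the first summand of (\ref{df}), the $\tau$-part of $\tilde{\tau}^{\Psi,s}$ contributes $\left.\frac{d}{dt}\right|_{t=0}\tau_{A_1+tA_2(v,\cdot),\ldots,A_s+tA_{s+1}(v,\cdot)}$; by Lemma \ref{tauprop}(v) (applied with $s+1$ in place of $s$, the distinguished vector $v$ being inserted in the next-to-last slot, which is precisely where $\mathcal{F}_\Psi$ places it) this equals $\tau_{A_1,\ldots,A_{s+1}}$ as an element of $Mult^{s+2}(V,W)$. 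The remaining contributions are the $g_s$-part $\left.\frac{d}{dt}\right|_{t=0}g_s(A_1+tA_2(v,\cdot),\ldots,A_{s-1}+tA_s(v,\cdot))$, which is a smooth function of $(A_1,\ldots,A_s)$ (the base point uses $A_1,\ldots,A_{s-1}$ and the velocity uses $A_2(v,\cdot),\ldots,A_s(v,\cdot)$) and multilinear in its vector arguments, together with the entire second summand $\left.\frac{d}{dt}\right|_{t=0}(\Psi_{A_1}^{-1})^*\Psi_{A_1+tA_2(v,\cdot)}^*\tilde{\tau}^{\Psi,s}_{A_1,\ldots,A_s}$, which is independent of $A_{s+1}$ by the remark following (\ref{df}) and, since $\tilde{\tau}^{\Psi,s}_{A_1,\ldots,A_s}$ involves only $A_1,\ldots,A_s$, is again a smooth function of $(A_1,\ldots,A_s)$. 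Writing $h_{s+1}\in C^\infty(U_\Psi\times\prod_{k=2}^{s}Sym^k(V,W),Mult^{s+2}(V,W))$ for the sum of these two remaining terms, we obtain $\left(\mathcal{F}_\Psi\tilde{\tau}^{\Psi,s}\right)_{A_1,\ldots,A_{s+1}}=\tau_{A_1,\ldots,A_{s+1}}+h_{s+1}(A_1,\ldots,A_s)$.

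Now apply $\Pi$. Since $\tau_{A_1,\ldots,A_{s+1}}\in\mathcal{T}_{s+1}(V,W)$ by Lemma \ref{tauprop}(i)--(iii) and $\Pi$ is a projection onto $\mathcal{T}_{s+1}(V,W)$, it fixes $\tau_{A_1,\ldots,A_{s+1}}$; hence $\tilde{\tau}^{\Psi,s+1}_{A_1,\ldots,A_{s+1}}=\tau_{A_1,\ldots,A_{s+1}}+g_{s+1}(A_1,\ldots,A_s)$ with $g_{s+1}:=\Pi\circ h_{s+1}$, a smooth map into $\mathcal{T}_{s+1}(V,W)$ depending only on $A_1,\ldots,A_s$. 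This closes the induction. For the final assertion, fix $(A_1,\ldots,A_{s-1})$ with $A_1\in U_\Psi\subset Mon(V,W)$ (so $A_1$ is injective); then $A_s\mapsto\tilde{\tau}^{\Psi,s}_{A_1,\ldots,A_s}$ is the affine surjection $A_s\mapsto\tau_{A_1,\ldots,A_s}$ of Lemma \ref{tauprop}(iv), postcomposed with translation by the fixed element $g_s(A_1,\ldots,A_{s-1})\in\mathcal{T}_s(V,W)$, and is therefore itself an affine surjection from $Sym^s(V,W)$ onto $\mathcal{T}_s(V,W)$.

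The only point requiring any care is the bookkeeping in the inductive step: one must check that among the three contributions to $\left(\mathcal{F}_\Psi\tilde{\tau}^{\Psi,s}\right)_{A_1,\ldots,A_{s+1}}$, exactly one — the one governed by Lemma \ref{tauprop}(v) — genuinely involves $A_{s+1}$, while both the $g_s$-correction and the $\Psi$-pullback correction are $A_{s+1}$-free and depend smoothly only on $(A_1,\ldots,A_s)$. There is no analytic content beyond this.
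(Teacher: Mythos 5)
Your proof is correct and is essentially the paper's argument: the same induction, with Lemma \ref{tauprop}(v) and formula (\ref{df}) used to peel off the $\tau_{A_1,\ldots,A_{s+1}}$ term, the remaining contributions (the $g_s$-derivative and the $\Psi$-pullback correction) observed to be $A_{s+1}$-independent, $\Pi$ applied, and Lemma \ref{tauprop}(iv) invoked at the end. The only cosmetic difference is in how the $A_{s+1}$-free remainder is packaged---the paper groups the pullback correction into $h_s$ and keeps $\mathcal{F}_\Psi g_s$ as a separate term, whereas you lump them together---but this is immaterial.
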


\begin{proof}  Given Lemma \ref{tauprop}, this follows easily by induction on $s$. Of course it is trivially true for $s=2$.  Assuming the first statement of the lemma for some $s\geq 2$, note that the maps $\bar{\tau}_{(s)} \co U_{\Psi}\times \prod_{k=1}^{s}Sym^k(V,W)\to Mult^{s+1}(V,W)$ defined by $\bar{\tau}_{(s)}(A_1,\ldots,A_s)=\tau_{A_1,\ldots,A_s}$ take values in $\mathcal{T}_s(V,W)$ by Lemma \ref{tauprop}(i)-(iii) (so $\Pi\tau_{A_1,\ldots,A_{s+1}}=\tau_{A_1,\ldots,A_{s+1}})$, and by Lemma \ref{tauprop}(v) and (\ref{df}) we have $(\mathcal{F}_{\Psi}\bar{\tau}_{(s)})_{A_1,\ldots,A_{s+1}}=\tau_{A_1,\ldots,A_{s+1}}+h_s(A_1,\ldots,A_s)$ for some smooth $h_s\co U_{\Psi}\times \prod_{k=2}^{s}Sym^k(V,W)\to Mult^{s+2}(V,W)$.  Consequently 
\[\tilde{\tau}^{\Psi,s+1}_{A_1,\ldots,A_{s+1}}=\tau_{A_1,\ldots,A_{s+1}}+\Pi h_s(A_1,\ldots,A_s)+\Pi\left(\mathcal{F}_{\Psi}g_s\right)_{A_1,\ldots,A_{s+1}}.\] (Here we are strictly speaking extending the domain of $g_s$ to $U_{\Psi}\times \prod_{k=2}^{s}Sym^k(V,W)$ by having it be independent of its last argument $A_s$.) But from the formula for $\mathcal{F}_{\Psi}$ it is clear that the fact that $g_s$ depends only on $A_1,\ldots,A_{s-1}$ implies that $\mathcal{F}_{\Psi}g_s$ depends only on $A_1,\ldots,A_s$.  So the first statement of the lemma holds for the value $s+1$, with $g_{s+1}(A_1,\ldots,A_s)=\Pi h_s(A_1,\ldots,A_s)+\Pi\left(\mathcal{F}_{\Psi}g_s\right)_{A_1,\ldots,A_{s},0}$.

This proves the first statement of the lemma by induction, and then the second statement follows from Lemma \ref{tauprop}(iv) since all elements of $U_{\Psi}$ are monomorphisms.
\end{proof}

\begin{prop}\label{KCR} For any integer $r\geq 1$ and any $\Psi\co U_{\Psi}\times V\to V$ as in Addendum \ref{add}, let \[ \mathcal{K}_{c}^{r,\Psi}(V,W)=\left\{(A_1,\ldots,A_{r})\in  \prod_{k=1}^{r}Sym^k(V,W)\left|\begin{array}{c}A_1\in U_{\Psi}\cap \mathcal{K}_{c}\mbox{ and for all }2\leq s\leq r,\,\\ \tilde{\tau}^{\Psi,s}_{A_{1},\ldots,A_{s}}(v_0,v_1,\ldots,v_s)=0\mbox{ for all }v_0,\ldots,v_s\in \ker(A_{1}^{*}\omega)\end{array}\right.\right\}.\]  Then $\mathcal{K}_{c}^{r,\Psi}(V,W)$ is a submanifold of $U_{\Psi}\times\prod_{k=2}^{r}Sym^k(V,W)$, with codimension equal to \[ \frac{c(c-1)}{2}+\sum_{s=2}^{r}\dim\mathcal{T}_{s}(\R^c,W)\] 
\end{prop}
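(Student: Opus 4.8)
The plan is to argue by induction on $r$, with the base case $r=1$ being immediate from Proposition \ref{KC}: since $\tilde{\tau}^{\Psi,s}$ enters the definition only for $2\le s\le r$, we have $\mathcal{K}_{c}^{1,\Psi}(V,W)=U_\Psi\cap\mathcal{K}_c$, which is a submanifold of the open subset $U_\Psi\subset Mon(V,W)$ of codimension $\frac{c(c-1)}{2}$, in agreement with the claimed formula (the sum over $s$ being empty).

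For the inductive step I would assume that $\mathcal{K}_{c}^{r,\Psi}(V,W)$ is a submanifold of $U_\Psi\times\prod_{k=2}^{r}Sym^k(V,W)$ of the asserted codimension, and analyze $\mathcal{K}_{c}^{r+1,\Psi}(V,W)$ inside $U_\Psi\times\prod_{k=2}^{r+1}Sym^k(V,W)$. Since none of the conditions defining $\mathcal{K}_{c}^{r,\Psi}(V,W)$ involve $A_{r+1}$, the preimage of $\mathcal{K}_{c}^{r,\Psi}(V,W)$ under the submersive coordinate projection that forgets the last factor is simply $\mathcal{K}_{c}^{r,\Psi}(V,W)\times Sym^{r+1}(V,W)$, a submanifold of $U_\Psi\times\prod_{k=2}^{r+1}Sym^k(V,W)$ of the same codimension. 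It then remains to show that, inside this submanifold, the single additional requirement ``$\tilde{\tau}^{\Psi,r+1}_{A_1,\ldots,A_{r+1}}(v_0,\ldots,v_{r+1})=0$ for all $v_i\in\ker(A_1^*\omega)$'' cuts out a further submanifold of codimension $\dim\mathcal{T}_{r+1}(\R^c,W)$.

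The heart of the matter is to realize this last condition as the zero set of a submersion, and here the straightening map $\Psi$ does exactly what it was built for. Fix a linear isomorphism $V^0\cong\R^c$. For $A_1\in U_\Psi\cap\mathcal{K}_c$ the isomorphism $\Psi_{A_1}\co V\to V$ of Addendum \ref{add} carries $V^0$ onto $\ker(A_1^*\omega)$, so the condition is equivalent to the vanishing on $V^0$ of the pullback $\Psi_{A_1}^*\tilde{\tau}^{\Psi,r+1}_{A_1,\ldots,A_{r+1}}$. Since conditions (i)--(iii) of Lemma \ref{tauprop} involve only permutations of arguments, they are preserved both under pullback along a linear isomorphism of $V$ and under restriction to a subspace; thus one gets a linear restriction map $\mathcal{T}_{r+1}(V,W)\to\mathcal{T}_{r+1}(V^0,W)$, and it is surjective (extend an element of $\mathcal{T}_{r+1}(V^0,W)$ by pulling it back along a linear projection $V\to V^0$). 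Hence the assignment $(A_1,\ldots,A_{r+1})\mapsto$ (restriction to $V^0$ of $\Psi_{A_1}^*\tilde{\tau}^{\Psi,r+1}_{A_1,\ldots,A_{r+1}}$) is a smooth map $F\co\mathcal{K}_{c}^{r,\Psi}(V,W)\times Sym^{r+1}(V,W)\to\mathcal{T}_{r+1}(\R^c,W)$ whose zero set is precisely $\mathcal{K}_{c}^{r+1,\Psi}(V,W)$.

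The step I expect to be the main obstacle is verifying that $F$ is a submersion, but this should reduce cleanly to Lemma \ref{tausymb}: holding $(A_1,\ldots,A_r)$ fixed and varying only $A_{r+1}$ (legitimate, since the $A_{r+1}$-factor is unconstrained in the product), Lemma \ref{tausymb} says $A_{r+1}\mapsto\tilde{\tau}^{\Psi,r+1}_{A_1,\ldots,A_{r+1}}$ is an affine surjection onto $\mathcal{T}_{r+1}(V,W)$, and postcomposing with the surjective linear map ``apply $\Psi_{A_1}^*$, then restrict to $V^0$'' keeps it an affine surjection onto $\mathcal{T}_{r+1}(\R^c,W)$; thus $dF$ is everywhere surjective. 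It then follows that $\mathcal{K}_{c}^{r+1,\Psi}(V,W)=F^{-1}(0)$ is a submanifold of $\mathcal{K}_{c}^{r,\Psi}(V,W)\times Sym^{r+1}(V,W)$ of codimension $\dim\mathcal{T}_{r+1}(\R^c,W)$, hence a submanifold of $U_\Psi\times\prod_{k=2}^{r+1}Sym^k(V,W)$ of codimension $\frac{c(c-1)}{2}+\sum_{s=2}^{r}\dim\mathcal{T}_s(\R^c,W)+\dim\mathcal{T}_{r+1}(\R^c,W)=\frac{c(c-1)}{2}+\sum_{s=2}^{r+1}\dim\mathcal{T}_s(\R^c,W)$, which closes the induction.
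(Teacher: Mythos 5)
Your proposal is correct and takes essentially the same route as the paper. The paper builds the single submersion $(A_2,\ldots,A_r)\mapsto(\iota^*\Psi_{A_1}^*\tilde\tau^{\Psi,s}_{A_1,\ldots,A_s})_{s=2}^r$ over $(U_\Psi\cap\mathcal{K}_c)\times\prod Sym^s(V,W)$ all at once, with submersivity following from the same triangular structure you exploit inductively (Lemma \ref{tausymb} giving affine surjectivity in $A_s$, plus surjectivity of pullback by $\Psi_{A_1}\circ\iota$); your induction just peels off one $\mathcal{T}_{s}(\R^c,W)$-factor at a time, which is a cosmetic rearrangement of the identical underlying argument.
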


\begin{proof}  Where $\iota\co V^0\to V$ is the inclusion, we have \[ \mathcal{K}_{c}^{r,\Psi}(V,W)=\left\{(A_1,\ldots,A_{r})\in  \prod_{k=1}^{r}Sym^k(V,W)\left|\begin{array}{c}A_1\in U_{\Psi}\cap \mathcal{K}_{c}\mbox{ and for all }2\leq s\leq r,\,\\ \iota^*\Psi_{A_1}^{*}\tilde{\tau}^{\Psi,s}_{A_1,\ldots,A_s}=0\in \mathcal{T}_s(V^0,W)\end{array}\right.\right\}.\]  

For each $s\in\{2,\ldots,r\}$, the fact that, by Lemma \ref{tausymb}, for any fixed $A_1,\ldots,A_{s-1}$ the map $A_s\mapsto \tilde{\tau}^{\Psi,s}_{A_1,\ldots,A_s}$ is an affine surjection to $\mathcal{T}_s(V,W)$ implies (since $\Psi_{A_1}\circ\iota$ is injective) that, again for fixed $A_1,\ldots,A_{s-1}$, $A_s\mapsto \iota^*\Psi_{A_1}^{*}\tilde{\tau}^{\Psi,s}_{A_1,\ldots,A_s}$ is an affine surjection (and hence a submersion) to $\mathcal{T}_s(V^0,W)$.  This readily implies that, for any fixed $A_1\in U\cap \mathcal{K}_{c}$, the map \begin{align*} \prod_{s=2}^{r}Sym^s(V,W) & \to \prod_{s=2}^{r}\mathcal{T}_s(V^0,W) \\ (A_2,\ldots,A_r) & \mapsto \left(\iota^*\Psi_{A_1}^{*}\tilde{\tau}^{\Psi,2}_{A_1,A_2},\iota^*\Psi_{A_1}^{*}\tilde{\tau}^{\Psi,3}_{A_1,A_2,A_3},\ldots,\iota^*\Psi_{A_1}^{*}\tilde{\tau}^{\Psi,r}_{A_1,\ldots,A_r}\right) \end{align*} is a submersion. 

Now by Proposition \ref{KC}, $(U_{\Psi}\cap \mathcal{K}_c)\times \prod_{s=2}^{r}Sym^s(V,W)$ is a submanifold  of $U_{\Psi}\times \prod_{s=2}^{r}Sym^s(V,W)$ with codimension $\frac{c(c-1)}{2}$.  It follows from the previous paragraph that $\mathcal{K}_{c}^{r,\Psi}(V,W)$ is the zero locus of a submersion from $(U_{\Psi}\cap \mathcal{K}_c)\times \prod_{s=2}^{r}Sym^s(V,W)$ to the vector space $\prod_{s=2}^{r}\mathcal{T}_s(V^0,W)$.  Thus $\mathcal{K}_{c}^{r,\Psi}(V,W)$ is a submanifold with codimension $\sum_{s=2}^{r}\dim\mathcal{T}_s(V^0,W)$ in $(U_{\Psi}\cap \mathcal{K}_c)\times \prod_{s=2}^{r}Sym^s(V,W)$, and therefore codimension $\frac{c(c-1)}{2}+\sum_{s=2}^{r}\dim\mathcal{T}_s(V^0,W)$ in $U_{\Psi}\times \prod_{s=2}^{r}Sym^s(V,W)$.  Recalling that $V^0$ has dimension $c$, this proves the proposition.
\end{proof}

\subsection{Jets and the rigid locus}

We will now incorporate the foundations built in Section \ref{algebra} into the theory of jet spaces; this will culminate in the proof of Theorem \ref{int1}(ii).  In outline, we will soon define what it means, for any positive integer $r$, for a map to be ``transversely $r$-noncoisotropic,'' first in the context of maps from open sets in $\mathbb{R}^d$ to symplectic Euclidean space (see Definition \ref{euctrn}), and then more generally for maps from any $d$-dimensional manifold into a $2n$-dimensional symplectic manifold (see Definition \ref{mantrn}).    Using Thom's jet transversality theorem together with Proposition \ref{KCR}, we will show that if $d\leq 2n-2$ then the set of transversely $r$-noncoisotropic maps is residual in appropriate topologies (see Lemmas \ref{gentrn0} and \ref{gentrn}), and open in the case of spaces of maps from a compact manifold into a symplectic manifold.  Meanwhile, Proposition \ref{trnwt} will show that, for $r$ greater than a dimensional constant, the image of every transversely $r$-noncoisotropic embedding is weightless.  The key ingredient in the proof of Proposition \ref{trnwt} is Lemma \ref{trn}, which implies that for a transversely $r$-noncoisotropic embedding one can set up the sort of iterative scheme based on Lemma \ref{rnp} that was alluded to in the first paragraph of Section \ref{genwt}.  With Proposition \ref{trnwt} in hand, one quickly obtains Corollary \ref{wtmain} and hence Theorem \ref{int1}(ii).

Let $\mathcal{O}\subset \R^d$ be an open subset, and for $1\leq r\leq \infty$ let $C^r(\mathcal{O},\R^{2n})$ denote the space of $C^r$ maps from $\mathcal{O}$ to $\R^{2n}$, endowed with the strong $C^r$ topology (see \cite[Section 2.1]{Hi}).  For $1\leq r<\infty$ let $J^r(\mathcal{O},\R^{2n})$ denote the manifold of $r$-jets of maps from $\mathcal{O}$ to $\R^{2n}$; since $\mathcal{O}$ is assumed to be an open subset of $\R^d$ (so that $T_x \mathcal{O}$ has a fixed identification with $\R^d$ for all $x\in \mathcal{O}$) we may identify \[ J^r(\mathcal{O},\R^{2n})=\{(x,y,A_1,\ldots,A_r)|x\in \mathcal{O},y\in \R^{2n},A_i\in Sym^i(\R^d,\R^{2n})\}\}.\]

For positive integers $c$ and $r$, and for a map $\Psi\co U_{\Psi}\times \R^d\to \R^d$ as in Addendum \ref{add} (where $U_{\Psi}\subset Mon(\R^d,\R^{2n})$ is an open subset) define \begin{equation}\label{phidef} \Phi_{c}^{r,\Psi}=\{(x,y,A_1,\ldots,A_r)\in J^r(\mathcal{O},\R^{2n})|(A_1,\ldots,A_r)\in \mathcal{K}_{c}^{r,\Psi}(\R^d,\R^{2n})\},\end{equation} where $\mathcal{K}_{c}^{r,\Psi}(\R^d,\R^{2n})$ has been defined in Proposition \ref{KCR} (and we use the standard symplectic form $\sum_{i=1}^{n}dy_i\wedge dy_{n+i}$ on $\R^{2n}$). One sees immediately from Proposition \ref{KCR} that $\Phi_{c}^{r,\Psi}$ is a submanifold of $J^r(\mathcal{O},\R^{2n})$ of codimension $\frac{c(c-1)}{2}+\sum_{s=2}^{r}\dim\mathcal{T}_s(\R^c,\R^{2n})$.

Recall that to any $C^{r+1}$ map $f\co \mathcal{O}\to \R^{2n}$ one may associate the $r$-jet of $f$, which is a $C^1$ map $j^rf\co \mathcal{O}\to J^r(\mathcal{O},\R^{2n})$ defined by $j^rf(x)=(x,f(x),(df)_x,\ldots,(d^rf)_x)$, where $(d^if)_x$ denotes the $i$th derivative of $f$ at $x$, viewed as a symmetric $i$-linear form from $T_xU\cong \R^d$ to $\R^{2n}$.  

\begin{dfn}\label{euctrn} An $C^{r+1}$ map $f\co \mathcal{O}\to\R^{2n}$ will be called \emph{transversely $r$-noncoisotropic} if for all $x\in\mathcal{O}$ such that $(df)_x\in\mathcal{K}_{2n-d}$ there is a map $\Psi\co U_{\Psi}\times \R^d\to \R^d$ as in Addendum \ref{add} such that $(df)_x\in U_{\Psi}$ and such that, for all $s\in \{1,\ldots,r\}$, the $s$-jet $j^sf\co \mathcal{O}\to J^s(\mathcal{O},\R^{2n})$ is transverse to $\Phi_{2n-d}^{s,\Psi}$.
\end{dfn}

\begin{lemma}\label{gentrn0} For any  $a>r$ (where $a\in \mathbb{N}\cup\{\infty\}$), the set of $f\in C^a(\mathcal{O},\R^{2n})$ such that $f$ is transversely $r$-noncoisotropic is residual in the strong $C^a$ topology on $C^a(\mathcal{O},\R^{2n})$.
\end{lemma}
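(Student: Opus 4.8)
The plan is to deduce this from Thom's jet transversality theorem, combined with a countability argument that handles the freedom, built into Definition~\ref{euctrn}, to let the auxiliary map $\Psi$ depend on the point $x$. The geometric input is already in place: Proposition~\ref{KCR} shows that each $\mathcal{K}_{2n-d}^{s,\Psi}(\R^d,\R^{2n})$ is a submanifold of $U_{\Psi}\times\prod_{k=2}^{s}Sym^k(\R^d,\R^{2n})$, so that (as observed just after \eqref{phidef}) each $\Phi_{2n-d}^{s,\Psi}$ is a submanifold of $J^s(\mathcal{O},\R^{2n})$.

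First I would fix a countable family of maps $\Psi$. By Addendum~\ref{add}, every $A\in\mathcal{K}_{2n-d}$ lies in the domain $U_{\Psi}$ of some map $\Psi\co U_{\Psi}\times\R^d\to\R^d$ of the type appearing in Definition~\ref{euctrn}, so the sets $U_{\Psi}$ form an open cover of $\mathcal{K}_{2n-d}$. Since $Mon(\R^d,\R^{2n})$ is an open subset of the finite-dimensional vector space $Hom(\R^d,\R^{2n})$, it is second countable, hence Lindel\"of; thus we may extract from this cover a countable subfamily $\{\Psi_i\}_{i\in\N}$ with $\mathcal{K}_{2n-d}\subset\bigcup_{i\in\N}U_{\Psi_i}$.

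Next, for each $i\in\N$ and each $s\in\{1,\ldots,r\}$, I would apply Thom's jet transversality theorem (see \cite[Chapter 3]{Hi}) to the submanifold $\Phi_{2n-d}^{s,\Psi_i}$ of $J^s(\mathcal{O},\R^{2n})$. Since $a>r\geq s$, for any $f\in C^a(\mathcal{O},\R^{2n})$ the jet $j^sf$ is a map of class $C^{a-s}$ with $a-s\geq 1$, so the theorem yields that \[ \mathcal{R}_{i,s}=\{f\in C^a(\mathcal{O},\R^{2n})\,|\,j^sf\mbox{ is transverse to }\Phi_{2n-d}^{s,\Psi_i}\} \] is residual in the strong $C^a$ topology; here transversality is interpreted so that it holds automatically at points of $\mathcal{O}$ at which $j^sf$ takes values outside the open subset of $J^s(\mathcal{O},\R^{2n})$ on which $\Phi_{2n-d}^{s,\Psi_i}$ is defined. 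Because $C^a(\mathcal{O},\R^{2n})$ with the strong $C^a$ topology is a Baire space (\cite[Chapter 2]{Hi}), the countable intersection $\mathcal{R}=\bigcap_{i\in\N}\bigcap_{s=1}^{r}\mathcal{R}_{i,s}$ is again residual.

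Finally I would verify that every $f\in\mathcal{R}$ is transversely $r$-noncoisotropic. Let $x\in\mathcal{O}$ satisfy $(df)_x\in\mathcal{K}_{2n-d}$, and pick $i$ with $(df)_x\in U_{\Psi_i}$. Then $\Psi_i$ meets the requirements of Definition~\ref{euctrn} at $x$: indeed $(df)_x\in U_{\Psi_i}$, and, since $f\in\mathcal{R}$, the jet $j^sf$ is transverse to $\Phi_{2n-d}^{s,\Psi_i}$ for every $s\in\{1,\ldots,r\}$. The only genuine subtlety is the dependence of $\Psi$ on $x$, which is handled by passing to the countable subcover; I do not anticipate any real obstacle, as the substantive work has been absorbed into Proposition~\ref{KCR} and Addendum~\ref{add}.
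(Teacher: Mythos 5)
Your proof is correct and takes essentially the same route as the paper's: fix a countable family of maps $\Psi_i$ whose domains cover $\mathcal{K}_{2n-d}$, apply Thom's jet transversality theorem for each $(i,s)$ to get a residual set, and intersect. The only additions are that you spell out the Lindel\"of argument for extracting a countable subcover (which the paper leaves implicit) and the final verification that membership in the countable intersection implies transverse $r$-noncoisotropy.
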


\begin{proof} Choose a countable collection of maps $\Psi_i\co U_{\Psi_i}\times \R^d\to \R^d$ as in Addendum \ref{add} such that the open sets $\{U_{\Psi_i}|i\in\mathbb{N}\}$ cover $\mathcal{K}_{2n-d}$.  If $f\co\mathcal{O}\to \R^d$ has the property that $j^sf\pitchfork \Phi_{2n-d}^{s,\Psi_i}$ for all $i\in \mathbb{N}$ and all $s\in\{1,\ldots,r\}$, then $f$ will be transversely $r$-noncoisotropic.  By Thom's jet transversality theorem (see, \emph{e.g.}, \cite[Theorem 3.2.8]{Hi}), for any given $i$ and $s$ with $s<a$ the set of $f$ such that $j^sf\pitchfork \Phi_{2n-d}^{s,\Psi_i}$ is residual in the strong $C^a$ topology on $C^a(\mathcal{O},\R^{2n})$.  Consequently the set described in the lemma contains a countable intersection of residual subsets and therefore is residual (in the strong $C^a$ topology for $a>r$).
\end{proof}

\begin{lemma}\label{trn} Let $f\in C^{r+1}(\mathcal{O},\R^{2n})$ and suppose that $\Psi\co U_{\Psi}\times \R^d\to \R^d$ as in Addendum \ref{add} has the property that $j^s f\pitchfork \Phi_{2n-d}^{s,\Psi}$ for each $s\in \{1,\ldots,r\}$. Define, for $1\leq s\leq r$, \[ \mathcal{O}_{s,f}^{\Psi}=(j^sf)^{-1}(\Phi_{2n-d}^{s,\Psi}).\]  Then each $\mathcal{O}_{s,f}^{\Psi}$ is a submanifold of $\mathcal{O}$ and we have \[ \mathcal{O}_{1,f}^{\Psi}=\{x\in \mathcal{O}|(df)_x\in U_{\Psi}\mbox{ and }\dim\ker (f^*\omega)_{x}=2n-d\} \] and, for $1\leq s\leq r-1$, \begin{equation}\label{us+1} \mathcal{O}_{s+1,f}^{\Psi}\supset\{x\in \mathcal{O}_{s,f}^{\Psi}|\ker(f^{*}\omega)_x\subset T_{x}\mathcal{O}_{s,f}^{\Psi}\}.\end{equation}
\end{lemma}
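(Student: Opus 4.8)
\emph{Proof proposal.}  The plan is to treat the three assertions in turn, the first two being formal and the third being the real content.

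First I would dispose of the submanifold claim and the formula for $\mathcal{O}_{1,f}^{\Psi}$.  By the description (\ref{phidef}) together with the codimension statement immediately following it (which rests on Proposition \ref{KCR}), each $\Phi_{2n-d}^{s,\Psi}$ is a submanifold of $J^s(\mathcal{O},\R^{2n})$; since $f\in C^{r+1}$ and $s\le r$, the jet map $j^sf$ is of class $C^1$, so the hypothesis $j^sf\pitchfork\Phi_{2n-d}^{s,\Psi}$ gives at once that $\mathcal{O}_{s,f}^{\Psi}=(j^sf)^{-1}(\Phi_{2n-d}^{s,\Psi})$ is a ($C^1$) submanifold of $\mathcal{O}$.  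For $\mathcal{O}_{1,f}^{\Psi}$ I would simply specialize Proposition \ref{KCR} to $r=1$, where $\mathcal{K}_{c}^{1,\Psi}(\R^d,\R^{2n})$ reduces to $U_{\Psi}\cap\mathcal{K}_c$ (there being no conditions indexed by $2\le s\le 1$); since $U_{\Psi}\subset Mon(\R^d,\R^{2n})$ and $(df)_x^*\omega=(f^*\omega)_x$ as bilinear forms on $T_x\mathcal{O}\cong\R^d$, membership of $x$ in $(j^1f)^{-1}(\Phi_{2n-d}^{1,\Psi})$ is exactly the condition $(df)_x\in U_{\Psi}$ together with $\dim\ker(f^*\omega)_x=2n-d$.

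Next I would set up the inclusion (\ref{us+1}).  Fix $s$ with $1\le s\le r-1$, fix $x\in\mathcal{O}_{s,f}^{\Psi}$ with $\ker(f^*\omega)_x\subset T_x\mathcal{O}_{s,f}^{\Psi}$, abbreviate $K_y=\ker(f^*\omega)_y$ and $A_i(y)=(d^if)_y$, and write $A_i=A_i(x)$.  Using the alternative description of $\mathcal{K}_c^{r,\Psi}$ recorded at the start of the proof of Proposition \ref{KCR}, the statement $x\in\mathcal{O}_{s,f}^{\Psi}$ unpacks to: $A_1\in U_{\Psi}\cap\mathcal{K}_{2n-d}$, and for each $\sigma$ with $2\le\sigma\le s$ the form $\tilde{\tau}^{\Psi,\sigma}_{A_1,\ldots,A_\sigma}$ vanishes on every tuple of vectors from $K_x$.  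To conclude that $x\in\mathcal{O}_{s+1,f}^{\Psi}$ one needs in addition only the single identity $\tilde{\tau}^{\Psi,s+1}_{A_1,\ldots,A_{s+1}}(v_0,\ldots,v_{s+1})=0$ for all $v_j\in K_x$.  Since $\tilde{\tau}^{\Psi,s+1}=\Pi\circ(\mathcal{F}_{\Psi}\tilde{\tau}^{\Psi,s})$ and $\Pi$ has the subspace-vanishing property of Lemma \ref{keyproj}, it suffices to prove that $(\mathcal{F}_{\Psi}\tilde{\tau}^{\Psi,s})_{A_1,\ldots,A_{s+1}}$ vanishes on all $(s+2)$-tuples of vectors from $K_x$.

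The heart of the argument is the following.  Given $v\in K_x$, the hypothesis places $v$ in $T_x\mathcal{O}_{s,f}^{\Psi}$, so (using that $\mathcal{O}_{s,f}^{\Psi}$ is a submanifold) I would choose a smooth curve $\gamma$ in $\mathcal{O}_{s,f}^{\Psi}$ with $\gamma(0)=x$ and $\dot\gamma(0)=v$ and set $A_i(t)=(d^if)_{\gamma(t)}$; the chain rule gives $A_i'(0)=A_{i+1}(0)(v,\cdot)$ for $1\le i\le s$, so $t\mapsto(A_1(t),\ldots,A_s(t))$ agrees to first order with the ``formal'' curve $t\mapsto(A_1+tA_2(v,\cdot),\ldots,A_s+tA_{s+1}(v,\cdot))$ in the definition of $\mathcal{F}_\Psi$, and likewise $L_t:=\Psi_{A_1(t)}$ agrees to first order with $t\mapsto\Psi_{A_1+tA_2(v,\cdot)}$.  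The key geometric observation is that, because $\gamma(t)\in\mathcal{O}_{s,f}^{\Psi}$ for every $t$, we have both $L_t(V^0)=\ker(A_1(t)^*\omega)=K_{\gamma(t)}$ (by Addendum \ref{add}, since $A_1(t)\in U_\Psi\cap\mathcal{K}_{2n-d}$) and the vanishing of $\tilde{\tau}^{\Psi,s}_{A_1(t),\ldots,A_s(t)}$ on all tuples from $K_{\gamma(t)}$.  Hence, for $v_0,\ldots,v_{s-1},v_s\in K_x=L_0(V^0)$, the arguments $L_tL_0^{-1}v_j$ all lie in $L_t(V^0)=K_{\gamma(t)}$, so the function
\[
t\longmapsto \bigl((L_tL_0^{-1})^*\tilde{\tau}^{\Psi,s}_{A_1(t),\ldots,A_s(t)}\bigr)(v_0,\ldots,v_{s-1},v_s)
\]
is identically zero, and its derivative at $t=0$ vanishes.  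By smoothness of $\Psi$, of $\tilde{\tau}^{\Psi,s}$, and of matrix inversion, this derivative at $t=0$ is unchanged if the genuine curves $A_i(t)$ and $L_t$ are replaced by the formal ones, and the result is precisely $(\mathcal{F}_{\Psi}\tilde{\tau}^{\Psi,s})_{A_1,\ldots,A_{s+1}}(v_0,\ldots,v_{s-1},v,v_s)$ (the extra variable $v$ occupying the second-to-last slot, as in the definition).  Thus this quantity vanishes for all arguments in $K_x$, which is exactly what was needed; applying $\Pi$ and Lemma \ref{keyproj} finishes the proof.

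I expect the only genuinely delicate point to be this last replacement of the actual curve $\gamma$ in $\mathcal{O}_{s,f}^{\Psi}$ by the formal curve defining $\mathcal{F}_\Psi$: one must verify that only first-order data in $t$ enters the derivative, which is where the identities $A_i'(0)=A_{i+1}(0)(v,\cdot)$ and the smoothness of all the maps constructed in Section \ref{algebra} are used, and one must match index conventions carefully so that all $s+2$ arguments appearing really do lie in $K_x$.  Everything else is bookkeeping with the definitions of $\mathcal{O}_{s,f}^{\Psi}$, $\Phi_{2n-d}^{s,\Psi}$, and $\mathcal{K}_{c}^{r,\Psi}$ and with the properties of $\Pi$ and $\Psi$ established in Lemma \ref{keyproj} and Addendum \ref{add}.
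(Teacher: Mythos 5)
Your proposal is correct and arrives at essentially the same place as the paper's proof, but by a slightly different (and arguably cleaner) route: the paper explicitly writes out the linearization $(j^sf)_{*x}v$ and compares it with the tangent-space characterization of $\Phi_{2n-d}^{s,\Psi}$ from Proposition \ref{KC} and Proposition \ref{KCR}, whereas you take a curve $\gamma$ in $\mathcal{O}_{s,f}^{\Psi}$ tangent to $v$, observe that the identity $\iota^*\Psi_{A_1(t)}^*\tilde{\tau}^{\Psi,s}_{A_1(t),\ldots,A_s(t)}=0$ holds along the whole curve, and differentiate at $t=0$ using the chain rule (and the first-order agreement $A_i'(0)=A_{i+1}(v,\cdot)$) to land on $(\mathcal{F}_\Psi\tilde\tau^{\Psi,s})_{A_1,\ldots,A_{s+1}}(v_0,\ldots,v_{s-1},v,v_s)=0$. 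Both arguments then finish in the same way by applying the $\mathfrak{S}_{s+1}$-equivariant projection $\Pi$ of Lemma \ref{keyproj} to convert this into the vanishing of $\tilde\tau^{\Psi,s+1}$ on tuples from $\ker(f^*\omega)_x$. The only point you should make explicit is a mild regularity check: for $1\le s\le r-1$, $j^sf$ is $C^{r+1-s}$ and hence at least $C^2$, so $\mathcal{O}_{s,f}^{\Psi}$ is a $C^2$ submanifold and the required $C^1$ curve $\gamma$ exists; the composite $t\mapsto (L_tL_0^{-1})^*\tilde\tau^{\Psi,s}_{A_1(t),\ldots,A_s(t)}(v_0,\ldots,v_s)$ is then $C^1$, which is what the chain-rule replacement of the actual curve by the formal one (with the same $1$-jet at $t=0$) requires.

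One small bookkeeping advantage of the paper's more explicit version is that it simultaneously records the equality (\ref{osf}), identifying the full set $\{x\in\mathcal{O}_{s,f}^{\Psi}\mid\ker(f^*\omega)_x\subset T_x\mathcal{O}_{s,f}^{\Psi}\}$ as a common vanishing locus; your argument proves the inclusion (\ref{us+1}) just as well, and since that inclusion is the only thing the lemma asserts, nothing is lost.
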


\begin{proof}  The implicit function theorem and the transversality assumption of course imply that the $\mathcal{O}_{s,f}^{\Psi}$ are submanifolds of $\mathcal{O}$.  Since $j^1f(x)=(x,f(x),(df)_x)$, the statement about $\mathcal{O}_{1,f}^{\Psi}$ follows immediately from the definition of $\Phi_{2n-d}^{1,\Psi}=\mathcal{O}\times \R^{2n}\times (U_{\Psi}\cap \mathcal{K}_{2n-d})$.  

Now let $1\leq s\leq r-1$ and consider $\mathcal{O}_{s+1,f}^{\Psi}$.  First note that under the obvious projection $\pi\co J^{s+1}(\mathcal{O},\R^{2n})\to J^s(\mathcal{O},\R^{2n})$ we have \[ \Phi_{2n-d}^{s+1,\Psi}\subset \pi^{-1}(\Phi_{2n-d}^{s,\Psi})\quad \mbox{ and }\quad \pi\circ j^{s+1}f=j^sf,\] so $\mathcal{O}_{s+1,f}^{\Psi}\subset \mathcal{O}_{s,f}^{\Psi}$.  Now the linearization of $j^s f$ is given by, for $x\in \mathcal{O}$ and $v\in T_x \mathcal{O}$, \[ (j^sf)_{*x}v=\left(v,f_*v,(d^2f)_x(v,\cdot),\ldots,(d^{s+1}f)_x(v,\cdot)\right)\in \R^d\times \R^{2n}\times\prod_{i=1}^{s}Sym^i(\R^d,\R^{2n})\cong T_{j^sf(x)}J^s(\mathcal{O},\R^{2n}).\] 

Let $x\in\mathcal{O}_{s,f}^{\Psi}$.  Then an element $v\in T_x\mathcal{O}$ belongs to $T_{x}\mathcal{O}_{s,f}^{\Psi}$ if and only if $(j^s f)_{*}v\in T_{j^s f(x)}\Phi_{2n-d}^{s,\Psi}$.  By the characterization of $\mathcal{K}_{c}^{r,\Psi}(\R^{d},\R^{2n})$ at the start of the proof of Proposition \ref{KCR} and the definition (\ref{phidef}) of $\Phi_{2n-d}^{r,\Psi}$ this holds if and only if $v\in T_{x}\mathcal{O}_{1,f}^{\Psi}$ and, for each $2\leq m\leq s$, it holds that \[ \left.\frac{d}{dt}\right|_{t=0}\iota^*\Psi_{(df)_x+t(d^2f)_x(v,\cdot)}^{*}\tilde{\tau}^{\Psi,m}_{(df)_x+t(d^2f)_x(v,\cdot) ,\ldots,(d^m f)_x+t(d^{m+1}f)_x(v,\cdot)}=0.\]

Now by definition the left hand side immediately above is precisely the $(m+1)$-linear form on $V^0$ given by \[ (z_0,\ldots,z_{m})\mapsto (\mathcal{F}_{\Psi}\tilde{\tau}^{\Psi,m})_{(df)_x,(d^2f)_x,\ldots,(d^{m+1}f)_x}(\Psi_{(df)_x}z_0,\ldots,\Psi_{(df)_x}z_{m-1},v,\Psi_{(df)_x}z_m)\] (note that here $3\leq m+1\leq s+1$, and recall that $\Psi_{(df)_x}$ maps the model $(2n-d)$-dimensional subspace $V^0\leq \R^d$ isomorphically to $\ker((df)_{x}^{*}\omega)$).   Meanwhile in view of Proposition \ref{KC}, $v\in T_{x}\mathcal{O}_{1,f}^{\Psi}$ if and only if $\omega((df)_xv_1,(d^2f)_x(v,v_2))+\omega((d^2f)_x(v,v_1),v_2)=0$, \emph{i.e.}, $\tilde{\tau}^{\Psi,2}_{(df)_x,(d^2f)_x}(v_1,v,v_2)=0$, for all $v_1,v_2\in V$.  

In view of this, we have \begin{align}\nonumber \{x\in & \mathcal{O}_{s,f}^{\Psi}|\ker(f^*\omega)_x\subset T_{x}\mathcal{O}_{s,f}^{\Psi}\}  \\ &=\left\{x\in\mathcal{O}_{s,f}^{\Psi}\left|\begin{array}{c}\mbox{For all }v_0,\ldots,v_{s+1}\in \ker(f^*\omega)_x,\, \tilde{\tau}^{\Psi,2}_{(df)_x,(d^2f)_x}(v_0,v_1,v_2)=0 \mbox{ and } \\  (\mathcal{F}_{\Psi}\tilde{\tau}^{\Psi,m})_{(df)_x,(d^2f)_x,\ldots,(d^{m+1}f)_x}(v_0,v_1,\ldots,v_{m+1})=0 \mbox{ for }3\leq m+1\leq s+1\end{array}\right.\right\}.\label{osf}\end{align}  Recalling that, for $3\leq m+1\leq s+1$, we have by definition \[ \tilde{\tau}^{\Psi,m+1}_{(df)_x,(d^2f)_x,\ldots,(d^{m+1}f)_x}=\Pi(\mathcal{F}_{\Psi}\tilde{\tau}^{\Psi,m})_{(df)_x,(d^2f)_x,\ldots,(d^{m+1}f)_x}\] where $\Pi\co Mult^{m+2}(\R^d,\R^{2n})\to \mathcal{T}_{m+1}(\R^d,\R^{2n})$ is a projection as in Lemma \ref{keyproj}, the inclusion (\ref{us+1}) immediately follows from (\ref{osf}) and the definition of $\mathcal{K}_{2n-d,s+1}^{\Psi}(\R^d,\R^{2n})$.
\end{proof}

We now set about globalizing these results.

\begin{dfn}\label{mantrn} Let $X$ be a smooth $d$-dimensional manifold and let $(M,\omega)$ be a $2n$-dimensional symplectic manifold.  A $C^{r+1}$ map $f\co X\to M$ will be called \emph{transversely $r$-noncoisotropic} provided that there is an atlas $\{\phi_{\alpha}\co U_{\alpha}\to \R^d|\alpha\in A\}$ for $X$ and a collection of Darboux charts $\psi_{\alpha}\co V_{\alpha}\to\R^{2n}$ for $M$ such that $f(U_{\alpha})\subset V_{\alpha}$ and $\psi_{\alpha}\circ f\circ\phi_{\alpha}^{-1}\co \phi_{\alpha}(U_{\alpha})\to \R^{2n}$ is transversely $r$-noncoisotropic in the sense of Definition \ref{euctrn}.
\end{dfn}

For $a\in \mathbb{N}$ let $Imm^a(X,M)$ denote the space of $C^a$ immersions from $X$ to $M$.

\begin{lemma}\label{gentrn} For any  $a>r$ with $a\in \mathbb{N}\cup\{\infty\}$ the set \[ \{f\in C^a(X,M)|f\mbox{ is transversely $r$-noncoisotropic}\}\] is residual in the strong $C^a$ topology on $C^a(X,M)$.  If additionally $X$ is compact then  \[  \{ f\in Imm^a(X,M)| f\mbox{ is transversely $r$-noncoisotropic}\} \] is open in the $C^{r+1}$ topology on $Imm^a(X,M)$.
\end{lemma}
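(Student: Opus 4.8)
The plan is to reduce both assertions to the Euclidean case already settled in Lemma~\ref{gentrn0}, using a countable locally finite atlas on $X$, a countable atlas of Darboux charts on $M$, and the Baire property of the strong $C^a$ topology.

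\emph{Residuality.} Fix a countable locally finite atlas $\{(U_i,\phi_i)\}_{i\in\N}$ of $X$, a countable atlas of Darboux charts $\{(V_j,\psi_j)\}_{j\in\N}$ of $M$, and (as in the proof of Lemma~\ref{gentrn0}) a countable family of maps $\Psi_k\co U_{\Psi_k}\times\R^d\to\R^d$ as in Addendum~\ref{add} whose domains $U_{\Psi_k}$ cover $\mathcal{K}_{2n-d}\subset Mon(\R^d,\R^{2n})$. For $i,j,k\in\N$ and $1\le s\le r$ let $\mathcal{T}_{ijks}$ be the set of $f\in C^a(X,M)$ such that the $s$-jet of the partially defined local representative $\psi_j\circ f\circ\phi_i^{-1}$ is transverse to $\Phi_{2n-d}^{s,\Psi_k}$ at every point of its domain $\phi_i\bigl(U_i\cap f^{-1}(V_j)\bigr)$. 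First I would check that each $\mathcal{T}_{ijks}$ is residual: the localization map $f\mapsto\psi_j\circ f\circ\phi_i^{-1}$ is continuous for the strong topologies, and by Thom's jet transversality theorem (\cite[Theorem~3.2.8]{Hi}; here $a>r\ge s$, so $j^sg$ is $C^1$) transversality of the $s$-jet to the submanifold $\Phi_{2n-d}^{s,\Psi_k}$, after exhausting each chart domain by compacta, is a countable intersection of open dense conditions in the strong $C^a$ topology. Then I would show $\bigcap_{i,j,k,s}\mathcal{T}_{ijks}$ is contained in the set of transversely $r$-noncoisotropic maps: given $f$ in this intersection and $x\in X$, pick $i$ with $x\in U_i$ and, since $f$ is fixed and $f(x)$ lies in some $V_j$, shrink $U_i$ to a chart $U_i'\ni x$ with $f(U_i')\subset V_j$; wherever the derivative of the representative on $\phi_i(U_i')$ lies in $\mathcal{K}_{2n-d}$ it lies in some $U_{\Psi_k}$, and $j^s$ of the representative is transverse to $\Phi_{2n-d}^{s,\Psi_k}$ there because $f\in\mathcal{T}_{ijks}$; hence the representative is transversely $r$-noncoisotropic in the sense of Definition~\ref{euctrn}. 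Letting $x$ vary and passing to a countable subcover produces an atlas witnessing Definition~\ref{mantrn}. As $C^a(X,M)$ with the strong topology is Baire, a countable intersection of residual sets is residual.

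\emph{Openness for compact $X$.} Let $f_0\in Imm^a(X,M)$ be transversely $r$-noncoisotropic. Since $f_0$ is an immersion, $(df_0)_x$ is always injective, and $2n-d$ is the maximal value of $\dim\ker\bigl((df)_x^*\omega\bigr)$ on injective linear maps; as this dimension is upper semicontinuous in $x$, the coisotropic locus $\Sigma_1(f)=\{x:(df)_x\in\mathcal{K}_{2n-d}\}$ is closed, hence compact, and a uniform estimate (on the compact complement of a neighbourhood of $\Sigma_1(f_0)$, where $(df_0)_x$ is bounded away from $\mathcal{K}_{2n-d}$) shows $\Sigma_1(f)$ lies in an arbitrarily small neighbourhood of $\Sigma_1(f_0)$ once $f$ is $C^1$-close to $f_0$. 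The point that makes openness work is that $\mathcal{K}_{2n-d}$ is \emph{relatively closed} in $Mon(\R^d,\R^{2n})$ (again because $2n-d$ is the maximal kernel dimension), so each $\Phi_{2n-d}^{s,\Psi}$ of Proposition~\ref{KCR} is relatively closed in the open set of jets $\{(x,y,A_1,\dots,A_s):A_1\in U_{\Psi}\}$; hence, for a compact set $K$ of \emph{immersive} jets whose $j^s f_0$-image avoids the thin ``boundary'' $\{A_1\in\overline{U_\Psi}\setminus U_\Psi\}$, transversality of a $C^{s+1}$ map to $\Phi_{2n-d}^{s,\Psi}$ over $K$ is a $C^{s+1}$-open, hence $C^{r+1}$-open, condition. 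Concretely, I would cover $\Sigma_1(f_0)$ by finitely many nested charts $U_l''\Subset U_l'\Subset U_l$ with $f_0(U_l)\subset V_l$ Darboux and $\{U_l''\}$ covering $\Sigma_1(f_0)$; in each $U_l$ the representative of $f_0$ is transversely $r$-noncoisotropic, so by compactness finitely many maps $\Psi^{(l,p)}$, with domains shrunk so as to stay off the boundaries above, witness all the required transversalities of $f_0$ over $\phi_l(\overline{U_l'})$. Openness of these finitely many transversality conditions, together with $\Sigma_1(f)\subset\bigcup_l U_l''$ for $f$ near $f_0$ and the fact that the condition of Definition~\ref{euctrn} is vacuous away from the coisotropic locus, then yields a $C^{r+1}$-neighbourhood of $f_0$ in $Imm^a(X,M)$ (using that $Imm^a$ is $C^1$-open) consisting of transversely $r$-noncoisotropic maps.

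\emph{Main obstacle.} The delicate step is the part-2 bookkeeping: one must keep the jets of the local representatives of immersions near $f_0$ away from the boundaries of the chart domains $U_\Psi$ (and of the refined charts), so that ``locally open'' transversality conditions genuinely assemble into a neighbourhood of $f_0$ — this is what forces both the shrinking of the $U_\Psi$ and the nesting $U_l''\Subset U_l'\Subset U_l$. This can be circumvented by first proving that the multilinear construction of Section~\ref{algebra} is equivariant under the pseudogroup of charts (symplectic on the $M$-side), so that the loci $\Phi_{2n-d}^{s,\Psi}$ glue to a well-defined submanifold of $J^s(X,M)$ that is closed in the open subbundle of immersive jets; being transversely $r$-noncoisotropic is then exactly transversality of $j^sf$ to this closed submanifold for $s=1,\dots,r$, and openness for compact $X$ becomes immediate.
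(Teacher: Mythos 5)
Your proof is correct and follows essentially the same strategy as the paper: reduce to the Euclidean case via a countable family of coordinate/Darboux charts and maps $\Psi_k$ covering $\mathcal{K}_{2n-d}$, invoke Thom's jet transversality theorem for each of the countably many local conditions, and use Baire category; for compact $X$, pass to a finite subcollection of charts with compact shrinkings and argue that the finitely many transversality conditions over compacta are stable under $C^{r+1}$-small perturbation. The main place where you add something the paper leaves implicit is the openness argument: the paper asserts directly that the transversality of $j^s(\psi_i\circ\tilde f\circ\phi_i^{-1})$ to $\Phi_{2n-d}^{s,\Psi_{ik}}$ over the compact $L_i$ persists under $C^{r+1}$-small perturbation, whereas you correctly flag that this requires keeping the first-order jets away from $\partial U_{\Psi_{ik}}$, and you supply the observation (not stated in the paper) that $\mathcal{K}_{2n-d}$ is \emph{relatively closed} in $Mon(\R^d,\R^{2n})$ because $2n-d$ is the maximal possible kernel dimension, so that after shrinking the $U_\Psi$'s each $\Phi_{2n-d}^{s,\Psi}$ is relatively closed over the relevant compact set of jets and transversality over a compactum is genuinely open. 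That extra care is warranted and tightens a compressed step in the published argument. Your concluding suggestion of gluing the $\Phi_{2n-d}^{s,\Psi}$ into a single closed submanifold of the immersive part of $J^s(X,M)$ is conceptually cleaner but requires an equivariance argument the paper does not undertake; the local-chart approach you actually carry out is the one the paper uses.
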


\begin{proof} Choose a countable atlas $\{\phi_i\co \mathcal{O}_i\to \R^d\}$ for $X$ such that the $\mathcal{O}_i$ form a basis for the topology on $X$.  Likewise choose a countable Darboux atlas $\{\psi_j\co V_j\to \R^{2n}\}$ such that the $V_j$ form a basis for the topology on $M$.  These atlases induce atlases on the jet manifolds $J^s(X,M)$, giving diffeomorphisms $\alpha_{ij}^{s}\co J^s(\mathcal{O}_i,V_j)\to J^s(\phi_i(\mathcal{O}_i),\psi_j(V_j))$ of the open sets $J^s(\mathcal{O}_i,V_j)\subset J^s(X,M)$ and 
$J^s(\phi_i(\mathcal{O}_i),\psi_j(V_j))\subset J^s(\mathbb{R}^d,\R^{2n})$.  As in the proof of Lemma \ref{gentrn0}, let $\{\Psi_k|k\in \mathbb{N}\}$ be a family of maps $\Psi_k\co U_{\Psi_k}\times \R^d\to \R^d$ as in Addendum \ref{add} such that the $U_{\Psi_k}$ cover $\mathcal{K}_{2n-d}$.  Now for $1\leq s\leq r$ and $i,j,k\in\mathbb{N}$ let \[ Z_{ijk}^{s}=(\alpha_{ij}^{s})^{-1}(\Phi_{2n-d}^{s,\Psi_k})\subset J^s(\mathcal{O}_i,V_j)\subset J^s(X,M).\]   It follows from the definitions that a $C^a$ map $f\co X\to M$ will be transversely $r$-noncoisotropic if for each $s=1,\ldots,r$ and each $i,j,k\in\mathbb{N}$ it holds that $j^s f$ is transverse  to $Z_{ijk}^{s}$.  But by the jet transversality theorem the set of $f$ having this latter property (for any given $i,j,k,s$) is residual in the strong $C^a$ topology, and so since a countable intersection of residual sets is residual we have proven the first sentence of the lemma.

We now assume that $X$ is compact and that $f\co X\to M$ is a transversely $r$-noncoisotropic immersion.  Using the compactness of $X$ we can find a finite atlas $\{\phi_i\co \mathcal{O}_i\to \R^d:i=1,\ldots,p\}$ for $M$ and a finite collection of Darboux charts $\psi_i\co V_i\to \R^{2n}$ so that each $f(\mathcal{O}_i)\subset V_i$ and each $\psi_i\circ f\circ \phi_{i}^{-1}$ is transversely $r$-noncoisotropic in the sense of Definition \ref{mantrn}.  Moreover we can arrange for there to be compact subsets $L_i\subset\mathcal{O}_i$ so that the $L_i$ still cover $M$. For each $i$ the image of $\phi(L_i)$ under $d(\psi_i\circ f\circ \phi_{i}^{-1})$ will then be covered by finitely many  open sets $U_{\Psi_{ik}}\subset Mon(\R^d,\R^{2n})$ with associated maps $\Psi_{ik}\co U_{\Psi_{ik}}\times \R^d\to\R^d$ such that $j^s(\psi_i\circ f\circ\phi_{i}^{-1})\pitchfork \Phi_{2n-d}^{s,\Psi_{ik}}$ for each $i,k$ and each $s\in\{1,\ldots,r\}$.  But a sufficiently $C^{r+1}$-small perturbation $\tilde{f}$ of $f$ will continue to have the properties that $\tilde{f}(L_i)\subset V_i$ and that $j^s(\psi_i\circ \tilde{f}\circ \phi_{i}^{-1})$ is transverse to  $\Phi_{2n-d}^{s,\Psi_{ik}}$ throughout $L_i$ (and therefore also throughout small neighborhoods $\mathcal{O}'_i$ of $L_i$, which will still be domains of the charts of an atlas for $X$).  Consequently $\tilde{f}$ will still be transversely $r$-non-coisotropic provided that $\tilde{f}$ is sufficiently $C^{r+1}$-close to $f$.
\end{proof}

\begin{prop}\label{trnwt} Assuming that the dimensions $d$ and $2n$ of, respectively, $X$ and $M$ obey $d\leq 2n-2$,  there is a number $r$, depending only on $d$ and $2n$, such that for any $C^{\infty}$ embedding $f\co X\to M$ which is transversely $r$-noncoisotropic and has closed image, the image $N=f(X)$ is weightless.  

Specifically, $r$ may be taken to be any positive integer such that \begin{equation}\label{rform} \sum_{s=2}^{r}\dim\mathcal{T}_s(\R^{2n-d},\R^{2n})>d-\frac{(2n-d+1)(2n-d)}{2}.\end{equation}
\end{prop}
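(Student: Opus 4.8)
The plan is to prove that $R_{N}=\varnothing$; since $d<2n$ the subset $N$ is a proper closed submanifold of $M$, so Lemma~\ref{RNlemma}(iii) then yields that $N$ is weightless. Accordingly, fix an arbitrary $p\in N$; I will exhibit an open neighborhood $\mathcal{O}$ of $p$ in $N$ with $\mathcal{O}\cap R_{N}=\varnothing$. Applying Lemma~\ref{rnp} with $\mathcal{O}=P=N$ shows $R_{N}\subset\{x\in N\mid T_{x}N^{\omega}\subset T_{x}N\}$, so there is nothing to prove unless $p$ lies in this coisotropic locus; assume it does. By Definition~\ref{mantrn} there are a chart $\phi\co U\to\R^{d}$ about $f^{-1}(p)$ and a Darboux chart $\psi\co V\to\R^{2n}$ with $f(U)\subset V$ such that $g:=\psi\circ f\circ\phi^{-1}$ is transversely $r$-noncoisotropic in the sense of Definition~\ref{euctrn}; since $p$ is coisotropic, $(dg)_{\phi(f^{-1}(p))}\in\mathcal{K}_{2n-d}$, so Definition~\ref{euctrn} supplies a map $\Psi\co U_{\Psi}\times\R^{d}\to\R^{d}$ as in Addendum~\ref{add} with $j^{s}g\pitchfork\Phi^{s,\Psi}_{2n-d}$ for $1\le s\le r$. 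Shrinking $U$ (and $V$) we may assume $(dg)_{x}\in U_{\Psi}$ for every $x\in\phi(U)$, and we set $\mathcal{O}=f(U)$, which is an open neighborhood of $p$ in $N$ since $f$ is a closed embedding.

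Write $\mathcal{O}^{\Psi}_{s}:=\mathcal{O}^{\Psi}_{s,g}=(j^{s}g)^{-1}(\Phi^{s,\Psi}_{2n-d})\subset\phi(U)$ for $1\le s\le r$; by Lemma~\ref{trn} these are submanifolds. Moreover, because $2n-d$ is the \emph{largest} possible value of $\dim\ker(A^{*}\omega)$ over injective $A$, the locus $\mathcal{K}_{2n-d}$ is a closed subset of $Mon(\R^{d},\R^{2n})$, and together with the closed conditions cutting out $\mathcal{K}^{s,\Psi}_{2n-d}$ this makes each $\mathcal{O}^{\Psi}_{s}$ relatively closed in $\phi(U)$. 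Using that $\psi$ is Darboux, one checks that under the identification $T_{x}N\cong\R^{d}$ the subspace $T_{x}N^{\omega}\cap T_{x}N$ corresponds to $\ker(g^{*}\omega)_{\phi(f^{-1}(x))}$; hence the coisotropic locus of $N$ inside $\mathcal{O}$ corresponds to $\mathcal{O}^{\Psi}_{1}$, and at its points $T_{x}N^{\omega}$ corresponds to the $(2n-d)$-dimensional space $\ker(g^{*}\omega)$. Thus $P_{1}:=f(\phi^{-1}(\mathcal{O}^{\Psi}_{1}))\cap\mathcal{O}$ is a relatively closed submanifold of $\mathcal{O}$ containing $\mathcal{O}\cap R_{N}$. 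Now iterate: if $\mathcal{O}\cap R_{N}\subset P_{s}:=f(\phi^{-1}(\mathcal{O}^{\Psi}_{s}))\cap\mathcal{O}$ for some $1\le s\le r-1$, then Lemma~\ref{rnp} gives $\mathcal{O}\cap R_{N}\subset\{x\in P_{s}\mid T_{x}N^{\omega}\subset T_{x}P_{s}\}$, which under the chart reads $\{u\in\mathcal{O}^{\Psi}_{s}\mid\ker(g^{*}\omega)_{u}\subset T_{u}\mathcal{O}^{\Psi}_{s}\}$, and this is contained in $\mathcal{O}^{\Psi}_{s+1}$ by the inclusion (\ref{us+1}) of Lemma~\ref{trn}; hence $\mathcal{O}\cap R_{N}\subset P_{s+1}$. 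Running this down to $s=r$ gives $\mathcal{O}\cap R_{N}\subset P_{r}$.

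One more application of Lemma~\ref{rnp}, now with $P=P_{r}$, gives $\mathcal{O}\cap R_{N}\subset\{u\in\mathcal{O}^{\Psi}_{r}\mid\ker(g^{*}\omega)_{u}\subset T_{u}\mathcal{O}^{\Psi}_{r}\}$. Every $u\in\mathcal{O}^{\Psi}_{r}\subset\mathcal{O}^{\Psi}_{1}$ has $\dim\ker(g^{*}\omega)_{u}=2n-d$, so this set is empty once $\dim\mathcal{O}^{\Psi}_{r}<2n-d$. But $j^{r}g\pitchfork\Phi^{r,\Psi}_{2n-d}$, so by Proposition~\ref{KCR} (with $V=\R^{d}$, $W=\R^{2n}$, whence the model subspace $V^{0}$ has dimension $2n-d$) the codimension of $\mathcal{O}^{\Psi}_{r}$ in $\phi(U)$ is
\[\frac{(2n-d)(2n-d-1)}{2}+\sum_{s=2}^{r}\dim\mathcal{T}_{s}(\R^{2n-d},\R^{2n}),\]
with $\mathcal{O}^{\Psi}_{r}=\varnothing$ if this exceeds $d$; hypothesis (\ref{rform}) is exactly the statement that this codimension exceeds $d-(2n-d)$, i.e.\ that $\dim\mathcal{O}^{\Psi}_{r}<2n-d$. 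So under (\ref{rform}) we get $\mathcal{O}\cap R_{N}=\varnothing$, hence $p\notin R_{N}$; as $p$ was arbitrary, $R_{N}=\varnothing$ and $N$ is weightless. Such an $r$ exists because $d\le 2n-2$ forces $2n-d\ge 2$, and for $c\ge 2$ the formula (\ref{0s}) in the proof of Lemma~\ref{tauprop} shows $\dim\mathcal{T}_{s}(\R^{c},\R^{2n})\ge 1$ for every $s\ge 2$, so the partial sums $\sum_{s=2}^{r}\dim\mathcal{T}_{s}(\R^{2n-d},\R^{2n})$ are unbounded.

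I expect the main work to lie in making the dictionary between the intrinsic formulation of Lemma~\ref{rnp} (phrased with $T_{x}N^{\omega}\subset T_{x}P$ inside $M$) and the coordinate objects $\mathcal{O}^{\Psi}_{s,g}$ and $\ker(g^{*}\omega)$ of Lemma~\ref{trn} completely rigorous, and in verifying that all of the $P_{s}$ are genuinely relatively closed in $\mathcal{O}$ — this is the ``technical issue'' that forces one to work inside a chart-sized neighborhood rather than globally, and it is precisely why one needs $2n-d$ to be the top nullity stratum, so that $\mathcal{K}_{2n-d}$, and with it every $\mathcal{O}^{\Psi}_{s,g}$, is closed.
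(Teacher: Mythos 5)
Your proof is correct and follows essentially the same strategy as the paper's: apply Lemma \ref{rnp} once with $P=N$ to localize $R_N$ to the coisotropic locus, then work in chart-sized neighborhoods, iterate Lemma \ref{rnp} together with the inclusion (\ref{us+1}) of Lemma \ref{trn} to push the rigid locus into the strata $\mathcal{O}_s^\Psi$, and finish by the codimension count from Proposition \ref{KCR}, which under hypothesis (\ref{rform}) forces $\dim\mathcal{O}_r^{\Psi}<2n-d$ and hence an empty intersection after one last application of Lemma \ref{rnp}. You spell out two points that the paper leaves somewhat implicit, and both are genuinely needed: first, that Definition \ref{euctrn} only supplies a $\Psi$ with $(df)_{x_0}\in U_\Psi$ at the given point, so one must shrink the chart domain to get $(dg)_x\in U_\Psi$ throughout; second, that the relative closedness of the sets $P_s$ (a hypothesis of Lemma \ref{rnp}) holds because $\mathcal{K}_{2n-d}$ is the top nullity stratum and hence closed in $Mon(\R^d,\R^{2n})$, and because the remaining $\tilde{\tau}$-conditions are closed conditions over $U_\Psi\cap\mathcal{K}_{2n-d}$. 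This is a good catch, and it is in fact the reason the argument must be run chart by chart rather than globally.
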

\begin{remark} Recall that $\mathcal{T}_s(\R^{2n-d},\R^{2n})$ is the space of $(s+1)$-linear maps from $\R^{2n-d}$ to $\R^{2n}$ obeying properties (i)-(iii) from Lemma \ref{tauprop}.  Of course $\dim\mathcal{T}_s(\R^{2n-d},\R^{2n})$ can be computed, but the formula that results is not particularly illuminating; suffice it to note that, provided $2n-d\geq 2$, we have $\dim\mathcal{T}_s(\R^{2n-d},\R^d)\geq s$, since if $v_0,v_1\in \R^{2n-d}$ are linearly independent of each other then the values $\tau(v_0,\ldots,v_0,v_1,\ldots,v_1)$, where $v_0$ is repeated some number $i$ of times where $1\leq i\leq s$, are independent of each other as $i$ varies.  So it is  sufficient to take $r$ so that $\frac{r(r+1)}{2}-1>d-\frac{(2n-d+1)(2n-d)}{2}$. \end{remark}
\begin{proof}  Let $f\co X\to M$ be a transversely $r$-noncoisotropic embedding and $N=f(X)$. For any $x\in X$ the subspace $T_{f(x)}N^{\omega}\leq T_{f(x)}M$ has dimension $2n-d$, and $\ker(f^*\omega)_x=f_{*}^{-1}(T_{f(x)}N^{\omega}\cap T_{f(x)}N)$ has the same dimension as $T_{f(x)}N^{\omega}\cap T_{f(x)}N$.  Thus $T_{f(x)}N^{\omega}\leq T_{f(x)}N$ if and only if $\dim\ker(f^*\omega)_x=2n-d$. Thus, where \[ X_1=\{x\in X|\dim\ker(f^*\omega)_x=2n-d\},\]  Lemma \ref{rnp} (with $\mathcal{O}=P=N$) shows that $f^{-1}(R_N)\subset X_1$. 

By definition, the fact that $f$ is transversely $r$-noncoisotropic means that for each $x_0\in X_1$ there is a chart $\phi\co \mathcal{O}\to\R^d$ for $X$ around $x_0$, a Darboux chart $\psi\co V\to \R^{2n}$ for $M$ with $f(\mathcal{O})\subset V$, and a map $\Psi\co U_{\Psi}\times\R^d\to \R^d$ as in Addendum \ref{add} such that $d(\psi\circ f\circ\phi^{-1})_{\phi(x)}\in U_{\Psi}$ for each $x\in\mathcal{O}$, with the property that $j^s(\psi\circ f\circ\phi^{-1})\pitchfork \Phi_{2n-d}^{s,\Psi}$ for each $s=1,\ldots,r$.  By  Lemma \ref{RNlemma}(iii), to prove the proposition it suffices to show that (provided  $r$ obeys (\ref{rform})) $R_N=\varnothing$.  Since we have already established that $f^{-1}(R_N)\subset X_1$, it thus suffices to show that, for all data $\phi,\psi,\Psi$ as just described, the intersection $R_N\cap f(\mathcal{O})$ is empty.

To do this, define, for $s=1,\ldots,r$, \[ X_{s}^{\Psi}=\phi^{-1}\left(j^s(\psi\circ f\circ \phi^{-1})^{-1}(\Phi_{2n-d}^{s,\Psi})\right).\]  So evidently $X_{1}^{\Psi}=X_1\cap \mathcal{O}$, so $f^{-1}(R_N)\cap \mathcal{O}\subset X_{1}^{\Psi}$. Of course just as in the proof of Lemma \ref{trn} we have $X_{s+1}^{\Psi}\subset X_{s}^{\Psi}$ for $1\leq s\leq r-1$.  Since $\phi\co \mathcal{O}\to \R^{d}$ is a coordinate chart (and so a diffeomorphism to its image), it follows directly from Lemma \ref{trn} that, for $1\leq s\leq r-1$, \[ \{x\in X_{s}^{\Psi}|\ker(f^{*}\omega)_x\subset T_x X_{s}^{\Psi}\}\subset X_{s+1}^{\Psi}.\]  Now as follows from the discussion in the first paragraph of the proof, for $x\in  X_{1}^{\Psi}$ we have $f_*\left(\ker (f^{*}\omega)_x\right)=T_{f(x)}N^{\omega}$; thus \[ \{y\in f(X_{s}^{\Psi})|T_y N^{\omega}\subset T_y f(X_{s}^{\Psi})\}\subset f(X_{s+1}^{\Psi}).\]  So (again using that $f$ is an embedding) Lemma \ref{rnp} shows that if $R_N\cap f(\mathcal{O})\subset f(X_{s}^{\Psi})$, then $R_N\cap f(\mathcal{O})\subset f(X_{s+1}^{\Psi})$.  So since we have already shown that $R_N\cap f(\mathcal{O})\subset f(X_{1}^{\Psi})$, it follows by induction that $R_N\cap f(\mathcal{O})\subset f(X_{r}^{\Psi})$.

Now the codimension of $X_{r}^{\Psi}$ in $X$ is the same as the codimension of $\Phi_{2n-d}^{r,\Psi}$ in $J^r(\phi(\mathcal{O}),\R^{2n})$, which by Proposition \ref{KCR} is equal to $\frac{(2n-d)(2n-d-1)}{2}+\sum_{s=2}^{r}\dim\mathcal{T}_s(\R^{2n-d},\R^{2n})$.  

Now if $r$ has been chosen so that $\sum_{s=2}^{r}\dim\mathcal{T}_s(\R^{2n-d},\R^{2n})>d-\frac{(2n-d+1)(2n-d)}{2}$, then \begin{align*} \dim X_{r}^{\Psi}&=d-\frac{(2n-d)(2n-d-1)}{2}-\sum_{s=2}^{r}\dim\mathcal{T}_s(\R^{2n-d},\R^{2n})
\\&<\frac{(2n-d+1)(2n-d)}{2}-\frac{(2n-d)(2n-d-1)}{2}=2n-d,
\end{align*}
and so for any $y\in f(X_{r}^{\Psi})$ we have $\dim T_y N^{\omega}>\dim T_y f(X_{r}^{\Psi})$.  Thus \[ \{y\in f(X_{r}^{\Psi})|T_yN^{\omega}\leq T_yf(X_{r}^{\Psi})\}=\varnothing,\] and so a final application of Lemma \ref{rnp} (now with $P=f(X_{r}^{\Psi})$) shows that $R_N\cap f(\mathcal{O})=\varnothing$, which as noted earlier suffices to prove the result.
\end{proof}

We conclude with the following immediate consequence of Lemma \ref{gentrn} and Proposition \ref{trnwt}, which finally proves Theorem \ref{int1}(ii):

\begin{cor}\label{wtmain} If $(M,\omega)$ is a $2n$-dimensional symplectic manifold and if $X$ is a  manifold of dimension $d\leq 2n-2$ then where $Emb_C(X,M)$ denotes the space of closed $C^{\infty}$ embeddings of $X$ into $M$, \[ \{f\in Emb_C(X,M)|f(X)\mbox{ is weightless}\}\] contains a subset which is residual and hence dense in the strong $C^{\infty}$ topology on $Emb_C(X,M)$ and which, if $X$ is compact, is open in the strong $C^{r+1}$ topology provided that $r\geq 1$ obeys 
(\ref{rform}).
\end{cor}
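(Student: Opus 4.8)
The plan is simply to package together Lemma \ref{gentrn} and Proposition \ref{trnwt}, the only additional input being standard facts about the space of closed embeddings sitting inside the space of all smooth maps. First I would fix, once and for all, a positive integer $r$ satisfying the inequality (\ref{rform}); such an $r$ exists and depends only on $d$ and $2n$ by the remark following Proposition \ref{trnwt}, since the hypothesis $d\le 2n-2$ means $2n-d\ge 2$, and then $\sum_{s=2}^{r}\dim\mathcal{T}_s(\R^{2n-d},\R^{2n})$ grows without bound as $r\to\infty$. With this choice of $r$, Proposition \ref{trnwt} guarantees that \emph{every} $C^{\infty}$ embedding $f\co X\to M$ which is transversely $r$-noncoisotropic and has closed image has weightless image $f(X)$. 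So it suffices to prove that, inside $Emb_C(X,M)$, the transversely $r$-noncoisotropic maps form a residual (hence dense) set, and an open set in the strong $C^{r+1}$ topology when $X$ is compact.

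Next I would recall the standard facts (see \cite[Chapter 2]{Hi}) that $C^{\infty}(X,M)$ with the strong $C^{\infty}$ topology is a Baire space, and that the subset $Emb_C(X,M)$ of closed $C^{\infty}$ embeddings is open in the strong $C^{1}$ topology, and therefore also in the strong $C^{\infty}$ topology. By Lemma \ref{gentrn} applied with $a=\infty$, the set $\mathcal{R}$ of transversely $r$-noncoisotropic maps is residual in the strong $C^{\infty}$ topology on $C^{\infty}(X,M)$. Since an open subspace of a Baire space is again Baire, and since the intersection of a residual set with an open set is residual in that open set (and residual subsets of a Baire space are dense), $\mathcal{R}\cap Emb_C(X,M)$ is residual and dense in $Emb_C(X,M)$. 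Every member of $\mathcal{R}\cap Emb_C(X,M)$ is a transversely $r$-noncoisotropic $C^{\infty}$ closed embedding, hence has weightless image by Proposition \ref{trnwt}; this proves the residual-and-dense assertion.

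Finally, suppose $X$ is compact. Then every $C^{\infty}$ embedding of $X$ automatically has compact, hence closed, image, so $Emb_C(X,M)$ coincides with the full space $Emb^{\infty}(X,M)$ of $C^{\infty}$ embeddings, and the strong and weak $C^{k}$ topologies coincide on maps out of $X$. The set $Emb^{\infty}(X,M)$ is open in $Imm^{\infty}(X,M)$ in the $C^{1}$ topology, hence in the $C^{r+1}$ topology. By the second sentence of Lemma \ref{gentrn} the transversely $r$-noncoisotropic immersions form an open subset of $Imm^{\infty}(X,M)$ in the $C^{r+1}$ topology; intersecting this open set with the open set $Emb^{\infty}(X,M)$ and invoking Proposition \ref{trnwt} once more produces an open (in the strong $C^{r+1}$ topology) subset of $Emb_C(X,M)$ all of whose members have weightless image, which is exactly the openness claim.

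The argument is essentially a formality once Lemma \ref{gentrn} and Proposition \ref{trnwt} are in hand, so there is no genuine obstacle; the only point demanding any care is the topological bookkeeping just described, namely checking that residual sets and open sets descend correctly to the open subspace $Emb_C(X,M)\subset C^{\infty}(X,M)$ and that the openness statement of Lemma \ref{gentrn}, which is phrased for immersions, can be transferred to closed embeddings via the $C^{1}$-openness of $Emb^{\infty}(X,M)$ in $Imm^{\infty}(X,M)$ when $X$ is compact.
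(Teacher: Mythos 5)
Your proof is correct and follows essentially the same route as the paper: both take the witnessing subset to be the transversely $r$-noncoisotropic closed embeddings, then combine Lemma \ref{gentrn} with Proposition \ref{trnwt}, using the openness of $Emb_C(X,M)$ in $C^{\infty}(X,M)$ and the fact that a residual set intersected with an open set is residual in the subspace. The only cosmetic difference is that you spell out the compact-case openness argument via the chain $Emb^{\infty}\subset Imm^{\infty}$ more explicitly than the paper does.
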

\begin{proof} Indeed the subset may be taken to be the collection of transversely $r$-noncoisotropic embeddings by Proposition \ref{trnwt}.  The second part of Lemma \ref{gentrn} directly implies that this subset is open in the $C^{r+1}$ (and hence also the $C^{\infty}$) topology on $Emb_C(X,M)$ if $X$ is compact.    Meanwhile $Emb_C(X,M)$ is (whether or not $X$ is compact) open in  the strong $C^{\infty}$ topology  (\cite[Corollary 2.1.6]{Hi}) and so the first part of Lemma \ref{gentrn} together with the fact that the intersection of a residual set in $C^{\infty}(X,M)$ with any open subset $W$ of $C^{\infty}(X,M)$ is residual in the subspace topology on $W$ implies that our subset is residual in $Emb_{C}(X,M)$.
\end{proof}

\bigskip
\footnotesize
\noindent\textit{Acknowledgments.}
I am grateful to the anonymous referee for his/her detailed suggestions and corrections.  This research was partly supported by NSF Grant DMS-1105700.

\end{document}